\newcommand{\ol}{\overline}
\newcommand{\wt}{\widetilde}
\newcommand{\coleq}{\mathrel{\mathop:}=}
\theoremstyle{definition}
\newtheorem{definition}{Definition}[section]
\newtheorem{notation}[definition]{Notation}
\newtheorem{remark}[definition]{Remark}
\theoremstyle{plain}
\newtheorem{lemma}[definition]{Lemma}
\newtheorem{proposition}[definition]{Proposition}
\newtheorem{theorem}[definition]{Theorem}
\newtheorem{corollary}[definition]{Corollary}
\newenvironment{theorem'}[1]
{\innercustomthm}
{\endinnercustomthm}
\newenvironment{corollary'}[1]
{\innercustomcor}
{\endinnercustomcor}
\begin{document}
\title{Uniform convergence to equilibrium for a family of \\ drift-diffusion models with trap-assisted recombination 
and the limiting Shockley--Read--Hall model}

\author{Klemens Fellner\thanks{Institute for Mathematics and Scientific Computing, Karl-Franzens-Universit\"at Graz, Heinrichstr. 36, 8010 Graz, Austria. Email: klemens.fellner@uni-graz.at}
\qquad
Michael Kniely\thanks{Institute for Mathematics and Scientific Computing, Karl-Franzens-Universit\"at Graz, Heinrichstr. 36, 8010 Graz, Austria. Email: michael.kniely@uni-graz.at}}

\maketitle

\begin{abstract}
We consider a family of drift-diffusion-recombination systems, 
where the recombination of electrons and holes is facilitated 
by an intermediate energy-level for electrons in so-called trapped states. 
In particular, it has been proven in \cite{GMS07} that the associated 
quasi-stationary state limit of an instantaneously fast trapped dynamics yields 
the famous Shockley--Read--Hall model for electron and hole recombination in semiconductor devices.

The main result of this paper proves exponential convergence to equilibrium uniformly in the fast reaction limit 
for the drift-diffusion-recombination systems and the limiting Shockley--Read--Hall model. 
The proof applies the so-called entropy method and the key results is to establish 
an entropy-entropy production inequality uniformly in the fast reaction limit.

Moreover, we prove existence of global solutions and show a-priori estimates, which 
are necessary to rigorously verify that solutions satisfy the entropy-entropy production law.



\end{abstract}

\paragraph{Key words:} Drift-diffusion-recombination models, semiconductors, Shockley--Read--Hall, trapped states, entropy method, convergence to equilibrium, exponential rate of convergence, fast reaction limit.

\paragraph{AMS subject classification:} Primary 35K57; Secondary 35B40, 35B45

%
%
%
%
%
%
%
%

\section{Introduction and main results}
In this paper, we consider the following PDE-ODE drift-diffusion-recombination system: 
\begin{equation}
\label{eqsystem}
\begin{cases}
\begin{aligned}
\partial_t n &= \nabla \cdot J_n(n) + R_n(n,n_{tr}), \\
\partial_t p &= \nabla \cdot J_p(p) + R_p(p,n_{tr}), \\
\varepsilon \, \partial_t n_{tr} &= R_p(p, n_{tr}) - R_n(n, n_{tr}),
\end{aligned}
\end{cases}
\end{equation}
with 
\begin{gather*}
J_n := \nabla n + n\nabla V_n = \mu_n \nabla\left(\frac{n}{\mu_n}\right),  \quad \mu_n := e^{-V_n},  \\
J_p := \nabla p + p\nabla V_p = \mu_p \nabla\left(\frac{p}{\mu_p}\right),  \quad \mu_p := e^{-V_p}, \\
R_n := \frac{1}{\tau_n} \left( n_{tr} - \frac{n}{n_0 \mu_n} (1 - n_{tr}) \right), \quad
R_p := \frac{1}{\tau_p} \left( 1 - n_{tr} - \frac{p}{p_0 \mu_p} n_{tr} \right),
\end{gather*}
where $n_0, p_0, \tau_n, \tau_p>0$ are positive recombination parameters and 
$\varepsilon\in(0,\varepsilon_0]$ for arbitrary $\varepsilon_0>0$ is a positive relaxation parameter to be detailed in the following. 

The physical motivation for system  \eqref{eqsystem} originates from the studies of Shockley, Read and Hall \cite{SR52, H52} on the generation-recombination statistics for electron-hole pairs in semiconductors. The involved physical processes are sketched in Figure \ref{figmodel}. The starting point for our considerations is a basic model of a semiconductor consisting of two electronic energy bands: In this model, charge carriers within the semiconductor are negatively charged electrons in the conduction band and positively charged holes (these are pseudo-particles, which describe vacancies of electrons) in the valence band. The corresponding charge densities of electrons and holes are denoted by $n$ and $p$, respectively. In Figure \ref{figmodel}, the in-between trap-level is a consequence of appropriately distributed foreign atoms in the crystal lattice of the semiconductor material. In general, there might be multiple intermediate energy levels due to various crystal impurities. In the sequel, we will restrict ourselves to exactly one additional trap level. 
The intermediate energy states facilitate the excitation of electrons from the valence band into the conduction band since this transition can now take part in two steps, each requiring smaller amounts of energy. On the other hand, 
charge carriers on the trap level are not mobile and their maximal density $n_{tr}$ is limited.
\medskip

The equations for $n$ and $p$ in system \eqref{eqsystem} include the drift-diffusion terms $\nabla \cdot J_n$ and $\nabla \cdot J_p$ as well as the recombination-terms $R_n$ and $R_p$. The quantities $V_n$ and $V_p$ within the fluxes $J_n$ and $J_p$ are given external time-independent potentials, which generate an additional drift for $n$ and $p$. 
Note that more realistic drift-diffusion models would additionally consider Poisson's equation coupled to \eqref{eqsystem} in order to 
incorporate drift caused by a self-consistent electrostatic potential. However, including a self-consistent drift structure into \eqref{eqsystem}
leads to great and still partially open difficulties in the here presented entropy method and is thus left 
for future works. 

The reaction-term $R_n$ models transitions of electrons from the trap-level to the conduction band (proportional to $n_{tr}$) and vice versa (proportional to $-n(1-n_{tr})$), where the maximum capacity of the trap-level is normalised to one. Similarly, $R_p$ encodes the generation and annihilation of holes in the valence band. But one has to be aware that the rate of hole-generation is equivalent to the rate of an electron moving from the valence band to the trap-level, which is proportional to ($1-n_{tr}$). Similar, the annihilation of a hole corresponds to an electron that jumps from the trap-level to the valence band, which yields a reaction rate proportional to $-pn_{tr}$.
\medskip

The dynamical equation for $n_{tr}$ in \eqref{eqsystem} is an ODE in time and pointwise in space with a right hand side depending on $n$ and $p$ via $R_n$ and $R_p$. In the same manner as above, one can find that all gain- and loss-terms for $n_{tr}$ are taken into account correctly via $R_p - R_n$. We stress that there is no drift-diffusion-term for $n_{tr}$. This is due to the correlation between foreign atoms and the corresponding trap-levels which are locally bound near these crystal impurities. As a consequence, an electron in a trap-level cannot move through the semiconductor, hence, the name trapped state.

In the recombination process, $n_0, p_0 > 0$ represent reference levels for the charge concentrations $n$ and $p$, while $\tau_n, \tau_p > 0$ are inverse reaction parameter. Finally, $\varepsilon > 0$ models the lifetime of the trapped states, where lifetime refers to the expected time until an electron in a trapped state moves either to the valence or the conduction band. Note that the concentration $n_{tr}$ of these trapped states satisfies $n_{tr} \in [0,1]$ provided this holds true for their initial concentration (cf. Theorem \ref{theoremsolution}).
\medskip

\begin{figure}%
\label{figmodel}
\centering
\begin{tikzpicture}
\draw[->] (-1,0) -- (-1,2) node[anchor=south]{Energy};
\draw (0,0) -- (3,0) node[anchor=west]{\quad valence band};
\draw[dashed] (0,1) -- (3,1) node[anchor=west]{\quad \textit{trap-level}};
\draw (0,2) -- (3,2) node[anchor=west]{\quad conduction band};
\draw[->] (.8, .2) -- (.8, .8);
\draw[<-] (1.1, .2) -- (1.1, .8);
\draw[->] (1.9, 1.2) -- (1.9, 1.8);
\draw[<-] (2.2, 1.2) -- (2.2, 1.8);
\end{tikzpicture}
\caption{A schematic picture illustrating the allowed transitions of electrons between the various energy levels.}
\end{figure}
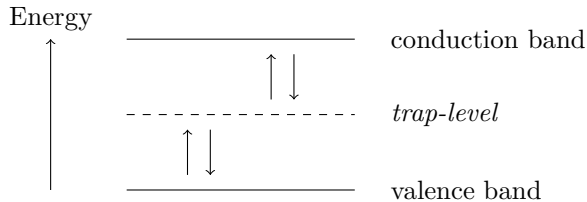

A particular situation is obtained in the (formal) limit $\varepsilon \rightarrow 0$. This quasi-stationary limit allows to derive the well known Shockley--Read--Hall-model for semiconductor recombination, where the concentration of trapped states is determined from the algebraic relation $0 = R_p(p, n_{tr}) - R_n(n, n_{tr})$, which results in
\[
n_{tr} = \frac{\tau_n + \tau_p \frac{n}{n_0 \mu_n}}{\tau_n + \tau_p + \tau_n \frac{p}{p_0 \mu_p} + \tau_p \frac{n}{n_0 \mu_n}}.
\]
Thus, the trapped state concentration $n_{tr}$ and its evolution can (formally) be eliminated from system \eqref{eqsystem}, 
while the evolution of the charge carriers $n$ and $p$ is then subject to the  
Shockley--Read--Hall recombination terms 
\[
R_n(n, n_{tr}) = R_p(p, n_{tr}) = \frac{1 - \frac{n p}{n_0 p_0 \mu_n \mu_p}}{\tau_n \big(1 + \frac{p}{p_0 \mu_p}\big) + \tau_p \big(1 + \frac{n}{n_0 \mu_n}\big)}.
\]
Note that the above quasi-stationary limit has been rigorously performed in \cite{GMS07}, even for more general models. 
See also \cite{MRS90} for semiconductor models assuming a reaction term of Shockley--Read--Hall-type.
\medskip

The main goal of this paper is to prove exponential convergence to equilibrium of system \eqref{eqsystem} 
with rates and constants, which are \emph{independent} of the relaxation time $\varepsilon$.
We will therefore always consider $\varepsilon \in (0, \varepsilon_0]$ for some arbitrary but fixed $\varepsilon_0 > 0$. Our approach also allows us to study the limiting case $\varepsilon \rightarrow 0$. 
\medskip

In the following, system \eqref{eqsystem} is considered on a bounded domain $\Omega \subset \mathbb{R}^m$ with sufficiently smooth boundary $\partial \Omega$.
In addition, we suppose that the volume of $\Omega$ is normalised, i.e. $|\Omega|=1$, which can be achieved by an appropriate scaling of the spatial variables.
We impose no-flux boundary conditions for $J_n$ and $J_p$,
\begin{equation}\label{BC}
\hat n \cdot J_n = \hat n \cdot J_p = 0 \quad \mbox{on} \ \partial \Omega,
\end{equation}
where $\hat n$ denotes the outer unit normal vector on $\partial \Omega$. 

The potentials $V_n$ and $V_p$ are assumed to satisfy
\begin{equation}
\label{eqpot}
V_n, V_p \in W^{2,\infty}(\Omega) \qquad \mbox{and} \qquad \hat n \cdot \nabla V_n, \ \hat n \cdot \nabla V_p \geq 0 \quad \mbox{on} \ \partial \Omega,
\end{equation}
where the last condition means that the potentials are confining. 
For later use, we introduce 
\[
V := \max( \| V_n \|_{L^\infty(\Omega)}, \| V_p \|_{L^\infty(\Omega)}).
\]
Finally, we assume that the initial states fulfil 
\[
(n_I, p_I, n_{tr,I}) \in L_+^\infty(\Omega)^3, \qquad \|n_{tr,I}\|_{L^\infty(\Omega)} \leq 1.
\]

As a consequence of the no-flux boundary conditions, system \eqref{eqsystem} features conservation of charge:
\[
\partial_t (n-p + \varepsilon \, n_{tr}) = \nabla \cdot (J_n - J_p)
\]
and, therefore, 
\begin{equation}
\label{eqconslaw}
\int_{\Omega} (n-p + \varepsilon \, n_{tr}) \, dx = \int_{\Omega} (n_I - p_I + \varepsilon \, n_{tr,I}) \, dx =: M,
\end{equation}
where $M \in \mathbb{R}$ is a real and possibly negative constant and $\varepsilon\in(0,\varepsilon_0]$ for arbitrary $\varepsilon_0>0$.
\medskip


The following Theorem \ref{theoremsolution} comprises the existence and regularity results which provide the framework for our subsequent considerations. In particular, we will show that there exists a global solution to \eqref{eqsystem}, and that $n_{tr}(t,x) \in [0,1]$ for all $t \in [0,\infty)$ and a.a. $x \in \Omega$.

\begin{theorem}[Time-dependent system]
\label{theoremsolution}
Let $n_0, p_0, \tau_n, \tau_p$ and $\varepsilon$ be positive constants. Assume that $V_n$ and $V_p$ satisfy \eqref{eqpot}
and that $\Omega \subset \mathbb{R}^m$ is a bounded,  sufficiently smooth domain. 

Then, for any  
non-negative initial datum $(n_I, p_I, n_{tr,I}) \in L^\infty(\Omega)^3$ satisfying $\|n_{tr,I}\|_{L^\infty(\Omega)} \leq 1$, there exists a unique non-negative global weak solution $(n, p, n_{tr})$ of system \eqref{eqsystem}, where $(n, p)$ satisfy the boundary conditions \eqref{BC} in the weak sense.

More precisely, for all $T \in (0, \infty)$ and by introducing the space 
\begin{equation}\label{W2}
W_2(0, T) := \big\{ f \in L^2((0, T), H^1(\Omega)) \, | \, \partial_t f \in L^2((0, T), H^1(\Omega)^*) \big\} \hookrightarrow C([0, T], L^2(\Omega)),
\end{equation}
where we recall the last embedding e.g. from \cite{Chi00},
we find that 
\begin{equation}
\label{eqsolutionnp}
(n, p) \in \left(C([0, T], L^2(\Omega))\cap W_2(0,T) \cap L^\infty((0, T), L^\infty(\Omega))\right)^2, 
\end{equation}
and 
\begin{equation}
\label{eqsolutionntr}
n_{tr} \in C([0, T], L^\infty(\Omega)), \quad \partial_t n_{tr} \in C([0, T], L^2(\Omega)).
\end{equation}



Moreover, there exist positive constants 
$C_n(\|n_I\|_{\infty},V_n)$, $C_p(\|p_I\|_{\infty},V_p)$ and $K_n(V_n) $, $K_p(V_p)$ independent of $\varepsilon$ such that
\begin{equation}\label{npLinfty}
\|n(t,\cdot)\|_{\infty} \le C_n + K_nt,\quad
\|p(t,\cdot)\|_{\infty} \le C_p + K_pt,\qquad
\text{for all } t\ge0.
\end{equation}

In addition, 
the concentration $n_{tr}(t, x)$ is bounded away from zero and one in the sense that for all times $\tau>0$ there exist positive constants $\eta = \eta(\varepsilon_0, \tau, \tau_n, \tau_p)$, $\theta = \theta(C_n,C_p,K_n,K_p)$ and a sufficiently small constant $\gamma(\tau,C_n,C_p,K_n,K_p)>0$   
such that 
\begin{equation}\label{ntrbounds}
n_{tr}(t,x) \in \left[\min\Bigl\{\eta t, \frac{\gamma}{1+\theta t}\Bigr\}, \, \max\Bigl\{1-\eta t,1-\frac{\gamma}{1+\theta t}\Bigr\}\right] \quad \text{for all } t\ge0 \text{ and a.a. } x\in\Omega   
\end{equation}
where $\eta \tau = \frac{\gamma}{1+\theta \tau}$ such that the linear and the inverse linear bound intersect at time $\tau$. As a consequence of \eqref{ntrbounds}, there exist positive constants $\mu$, $\Gamma>0$ (depending on $\tau$, $\eta$, $\theta$, $\gamma$, $V_n$, $V_p$) such that 
\begin{equation}\label{npbounds}
n(t,x),p(t,x) \ge \min\Bigl\{\mu\frac{ t^2}{2}, \frac{\Gamma}{1+\theta t}\Bigr\} \quad \text{for all } t\ge0 \text{ and a.a. } x\in\Omega
\end{equation}
where $\mu \frac{\tau^2}{2} = \frac{\Gamma}{1+\theta \tau}$ such that the quadratic and the inverse linear bound intersect at the same time $\tau$.
\end{theorem}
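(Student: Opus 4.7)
The plan proceeds in three stages: local existence by Banach fixed point followed by global extension via the a priori $L^\infty$ bounds \eqref{npLinfty}; the key two-envelope scalar comparison producing \eqref{ntrbounds}; and comparison with an appropriate time-dependent subsolution for \eqref{npbounds}. For local existence, given $(\bar n,\bar p)\in L^\infty((0,T)\times\Omega)^2$ on a small time interval, the $n_{tr}$-equation is a pointwise-in-$x$ linear affine ODE $\varepsilon\partial_t n_{tr}=a(1-n_{tr})-bn_{tr}$ with $a:=\tau_p^{-1}+\bar n/(\tau_n n_0\mu_n)\ge 0$ and $b:=\tau_n^{-1}+\bar p/(\tau_p p_0\mu_p)\ge 0$; non-negativity of $a,b$ makes $[0,1]$ invariant under this ODE and produces the regularity in \eqref{eqsolutionntr}. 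Feeding $n_{tr}$ back into the two decoupled linear drift-diffusion equations for $n,p$ (with no-flux BC) yields $(n,p)\in W_2(0,T)$ by standard Galerkin/Lions theory, and an $L^2$-contraction estimate on small $T$ closes the fixed-point loop; non-negativity of $n,p$ is preserved because $R_n,R_p\ge 0$ wherever $n$ respectively $p$ vanishes. Since $n_{tr}\in[0,1]$ one has $R_n\le 1/\tau_n$, so a Stampacchia truncation against $(n/\mu_n-K)_+$ applied to the symmetric form $\mu_n\partial_t(n/\mu_n)=\nabla\!\cdot\!(\mu_n\nabla(n/\mu_n))+R_n$, whose boundary term is non-positive by the confining condition in \eqref{eqpot}, yields $\|n/\mu_n\|_\infty(t)\le\|n_I/\mu_n\|_\infty+t/(\tau_n\inf\mu_n)$ and hence \eqref{npLinfty} with constants independent of $\varepsilon$; the identical argument for $p$ rules out blow-up and extends the solution globally.

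The core of the proof is \eqref{ntrbounds}. Using \eqref{npLinfty} one has $a(t,x)+b(t,x)\le A+Bt$ with $A,B$ determined by $C_n,C_p,K_n,K_p,V_n,V_p,\tau_n,\tau_p$, while $a\ge 1/\tau_p$ and $b\ge 1/\tau_n$ hold throughout. I prove the linear lower envelope $n_{tr}\ge\eta t$ on $[0,\tau]$ by scalar comparison: at a first contact point $n_{tr}(t_0,x)=\eta t_0\le 1/2$ the ODE gives $\varepsilon\partial_t n_{tr}\ge (2\tau_p)^{-1}-(A+B\tau)\eta t_0$, which exceeds $\varepsilon\eta$ uniformly in $\varepsilon\le\varepsilon_0$ when $\eta$ is small enough. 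For the inverse-linear envelope $g(t):=\gamma/(1+\theta t)$ on $[\tau,\infty)$, I check that $a(1-g)-bg-\varepsilon\partial_t g\ge 0$; after multiplying by $(1+\theta t)$ this reduces to $\tau_p^{-1}(1+\theta t)-\gamma(A+Bt)+\varepsilon\gamma\theta/(1+\theta t)\ge 0$, which holds for all $t\ge 0$ and $\varepsilon\le\varepsilon_0$ provided $\theta$ is large enough relative to $B\gamma\tau_p$ and $\gamma$ small enough. Scalar comparison then gives $n_{tr}\ge g$, and the constants are tuned so that $\eta\tau=\gamma/(1+\theta\tau)$, producing the lower envelope in \eqref{ntrbounds}. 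The upper bound is the symmetric statement applied to $w:=1-n_{tr}$, which satisfies $\varepsilon\partial_t w=b(1-w)-aw$.

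For \eqref{npbounds} I would seek a subsolution of the form $\underline n(t,x)=\alpha(t)\mu_n(x)$, which annihilates the flux $J_n$ and reduces the $n$-equation to the scalar ODE inequality $\alpha'\le n_{tr}^{\min}(t)/(\tau_n\sup\mu_n)-\alpha/(\tau_n n_0\inf\mu_n)$ after using $R_n\ge n_{tr}/\tau_n-n/(\tau_n n_0\mu_n)$. Integrating this ODE with the piecewise source coming from \eqref{ntrbounds}, namely linear in $t$ on $[0,\tau]$ and of order $1/t$ on $[\tau,\infty)$, yields the quadratic growth $\mu t^2/2$ on $[0,\tau]$ and the tail $\Gamma/(1+\theta t)$ on $[\tau,\infty)$ by balancing source against damping, with constants matched at $t=\tau$ as for $n_{tr}$; the analogous argument for $p$ with $\underline p=\beta(t)\mu_p(x)$ completes \eqref{npbounds}. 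The main obstacle is the uniform-in-$\varepsilon$ inverse-linear comparison for $n_{tr}$: because $a+b$ grows linearly in $t$ as a consequence of \eqref{npLinfty}, the ODE equilibrium $a/(a+b)$ only decays like $1/t$, so the envelope $\gamma/(1+\theta t)$ is essentially sharp and $\theta,\gamma$ must be tuned to track this decay uniformly in $\varepsilon\in(0,\varepsilon_0]$, while simultaneously matching the transient linear bound $\eta t$ at the single time $\tau$ and absorbing the $\varepsilon\gamma\theta/(1+\theta t)$ term from $\partial_t g$ without degrading the final constants.
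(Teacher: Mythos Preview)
Your proposal is correct and follows essentially the same strategy as the paper: a Banach fixed-point for local existence, Stampacchia truncation against the transformed variable $n/\mu_n$ for the linear $L^\infty$ bounds \eqref{npLinfty}, scalar ODE comparison for the two-sided envelope \eqref{ntrbounds}, and a subsolution of the form $\alpha(t)\mu_n(x)$ (equivalently, the paper tests $(\,n/\mu_n-\alpha(t)\,)_{-}$) for \eqref{npbounds}. The only organizational difference is that the paper sets up its fixed point on a four-component system with truncations $X^+$ and $X^{[0,1]}$ and removes them a~posteriori, whereas you solve the $n_{tr}$-ODE exactly for given $(\bar n,\bar p)$ and thereby get $n_{tr}\in[0,1]$ for free; this is a cosmetic variation and both schemes close in the same way.
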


\begin{remark}
The existence theory of Theorem \ref{theoremsolution} for the coupled ODE-PDE problem \eqref{eqsystem} applies standard parabolic 
methods and pointwise ODE estimates. The proof is 
therefore postponed to the Appendix. 
It relates to previous results like \cite{GMS07} by assuming 
$L^\infty$ initial data and by proving $L^\infty$-bounds
in order to control nonlinear terms. 
We remark that the main objective of this article is the following 
quantitative study of the large-time behaviour of global solutions 
to system \eqref{eqsystem}.

\end{remark}
\medskip

The main tool in order to quantitatively study the large-time behaviour of global solutions to system \eqref{eqsystem}, 
is the entropy functional 
\begin{equation}
\label{eqentropy}
E(n,p,n_{tr}) = \int_{\Omega} \left( n \ln \frac{n}{n_0 \mu_n} - (n-n_0\mu_n) + p \ln \frac{p}{p_0 \mu_p} - (p-p_0\mu_p) + \varepsilon \int_{1/2}^{n_{tr}} \ln \left( \frac{s}{1-s} \right) ds \right) dx.
\end{equation}
For $n$ and $p$, we encounter Boltzmann-entropy contributions $a \ln a - (a - 1) \geq 0$, whereas $n_{tr}$ enters the entropy functional via a non-negative integral term. Note that the integral $\int_{1/2}^{n_{tr}} \ln \bigl( \frac{s}{1-s} \bigr) ds$ is non-negative and well-defined for all $n_{tr}(x)\in[0,1]$.

It is straight forward to calculate that the entropy functional \eqref{eqentropy} is indeed a Ljapunov functional: By introducing the entropy production functional
\begin{equation}\label{eeplaw}
D := -\frac{d}{dt} E,
\end{equation}
it holds true along solution trajectories of system \eqref{eqsystem} that 
\begin{align}
D(n,p,n_{tr}) &= - \int_{\Omega} \left( \big( \nabla \cdot J_n + R_n \big) \ln \left( \frac{n}{n_0 \mu_n} \right) + \big( \nabla \cdot J_p + R_p \big) \ln \left( \frac{p}{p_0 \mu_p} \right) + \varepsilon \ln \left( \frac{n_{tr}}{1 - n_{tr}} \right) \partial_t n_{tr} \right) dx \nonumber \\
&= \int_{\Omega} \left( J_n \cdot \frac{J_n}{n} + J_p \cdot \frac{J_p}{p} - R_n \ln \left( \frac{n}{n_0 \mu_n} \right) - R_p \ln \left( \frac{p}{p_0 \mu_p} \right) - \ln \left( \frac{n_{tr}}{1 - n_{tr}} \right) \big( R_p - R_n \big) \right) dx \nonumber \\
&= \int_{\Omega} \left(\frac{|J_n|^2}{n} + \frac{|J_p|^2}{p} - R_n \ln \left( \frac{n(1-n_{tr})}{n_0 \mu_n n_{tr}} \right) - R_p \ln \left( \frac{p n_{tr}}{p_0 \mu_p (1-n_{tr})} \right) \right) dx\ge0. \label{eqproduction}
\end{align}
The entropy production functional involves flux-terms, which are obviously non-negative, and reaction-terms of the form $(a-1) \ln a \geq 0$. Thus, the entropy $E$ and its production $D$ are non-negative functionals, which formally implies the entropy $E$ to be monotonically decreasing in time.

In order to rigorously verify (a weak version of) the entropy-entropy production law \eqref{eeplaw}, note that the last two reaction terms in \eqref{eqproduction}
are unbounded for $n_{tr}(t,x)\to0,1$ or $n(t,x),p(t,x)\to0$ and that the 
entropy production is therefore potentially unbounded 
even for smooth solutions.
However, the regularity of $n$ and $p$ of Theorem \ref{theoremsolution} as well as the lower and upper bounds \eqref{ntrbounds} for $n_{tr}$ and the lower bounds \eqref{npbounds} for $n$ and $p$ allow to prove that any solution of Theorem \ref{theoremsolution} satisfies the following weak entropy-entropy production law 
\begin{equation}\label{weakEntropy}
E(t_1) + \int_{t_0}^{t_1} D(s)\,ds = E(t_0), \qquad \text{for all } 0 < t_0 \leq t_1 < \infty.
\end{equation}
Note that \eqref{weakEntropy} implies that solutions of Theorem \ref{theoremsolution} may only feature singularities of $D$ at time zero (due to a lacking regularity of the initial data or due to initial data $n_{tr,I}(x)\in[0,1]$, $n_I(x),p_I(x)\in[0,\infty)$). 
\medskip

We will further prove that there exists a unique equilibrium $(n_\infty, p_\infty, n_{tr,\infty})$ of system \eqref{eqsystem} in a suitable (and natural) function space. This equilibrium can be seen as the unique solution of the stationary system \eqref{eqstat} or, equivalently, as the unique state for which the entropy production \eqref{eqproduction} vanishes. Note that from both viewpoints, uniqueness of the equilibrium is only satisfied once the mass constant $M$ in the conservation law \eqref{eqconslaw} is fixed. For simplicity of the presentation, we shall introduce the following notation for integrated quantities.

\begin{notation}
For any function $f$, we set
\[
\ol{f} := \int_{\Omega} f(x) \, dx 
\]
which is consistent with the usual definition of the average of $f$ since $|\Omega|=1$. Using this notation, the conservation law \eqref{eqconslaw} rewrites as
\[
\ol n - \ol p + \varepsilon \, \ol{n_{tr}} = M \in \mathbb{R}.
\]
\end{notation}

\begin{theorem}[Stationary system]
\label{theoremequilibrium}
Let $M \in \mathbb{R}$, $\varepsilon\in(0,\varepsilon_0]$ for arbitrary $\varepsilon_0>0$ and $(n_\infty, p_\infty, n_{tr,\infty}) \in X$ where $X$ is defined via 
\[
X := \{ (n, p, n_{tr}) \in H^1(\Omega)^2 \times L^\infty(\Omega) \, \big| \, \ol n - \ol p + \varepsilon \ol{n_{tr}} = M \land (\exists \, \gamma > 0) : n, p \geq \gamma \, \mbox{a.e.} \land n_{tr} \in [\gamma, 1-\gamma] \, \mbox{a.e.}\}.
\]
Then, the following statements are equivalent.
\begin{enumerate}
\item $(n_\infty, p_\infty, n_{tr,\infty})$ is a solution of the stationary system
\begin{subequations}
\label{eqstat}
\begin{alignat}{3}
\nabla \cdot J_n(n_\infty) + R_n(n_\infty,n_{tr,\infty}) &= 0, \label{eqstatn} \\
\nabla \cdot J_p(p_\infty) + R_p(p_\infty,n_{tr,\infty}) &= 0, \label{eqstatp} \\
R_p(p_\infty,n_{tr,\infty}) - R_n(n_\infty,n_{tr,\infty}) &= 0. \label{eqstatntr} 
\end{alignat}
\end{subequations}

\item $D(n_\infty, p_\infty, n_{tr,\infty}) = 0$.

\item $J_n(n_\infty) = J_p(p_\infty) = R_n(n_\infty,n_{tr,\infty}) = R_p(p_\infty,n_{tr,\infty}) = 0$ a.e. in $\Omega$.

\item The state $(n_\infty, p_\infty, n_{tr,\infty})$ satisfies
\begin{equation}
\label{eqequilibrium}
n_\infty = n_\ast e^{-V_n}, \qquad p_\infty = p_\ast e^{-V_p}, \qquad n_{tr,\infty} = \frac{n_\ast}{n_\ast + n_0} = \frac{p_0}{p_\ast + p_0}
\end{equation}
where the positive constants $n_\ast, p_\ast>0$ are uniquely determined by the condition 
\begin{equation}\label{eqequilibrium2}
n_\ast p_\ast = n_0 p_0
\end{equation}
and the conservation law 
\begin{equation}\label{eqequilibrium3}
n_\ast \ol{\mu_n} - p_\ast \ol{\mu_p} + \varepsilon \, n_{tr,\infty} = M,
\end{equation}
where the uniqueness 
follows from the strict monotonicity of 
$
f(n_\ast) \coleq n_\ast \ol{\mu_n} - \frac{n_0 p_0 \ol{\mu_p}}{n_\ast} + \varepsilon \, \frac{n_\ast}{n_\ast + n_0}
$
on $(0, \infty)$ and the asymptotics $f(n_\ast) \rightarrow -\infty$ for $n_\ast \rightarrow 0^+$ and $f(n_\ast) \rightarrow \infty$ for $n_\ast \rightarrow \infty$.
\end{enumerate}
Consequently, there exists a unique positive equilibrium $(n_\infty, p_\infty, n_{tr,\infty}) \in X$ given by the formulae in \eqref{eqequilibrium}. Furthermore, this equilibrium satisfies
\begin{equation}
\label{eqequiformulae}
n_ {tr,\infty} = \frac{n_\ast}{n_0}(1 - n_{tr,\infty}), \qquad 1 - n_{tr,\infty} = \frac{p_\ast}{p_0} n_{tr,\infty}.
\end{equation}
\end{theorem}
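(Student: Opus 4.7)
The plan is to prove the four-way equivalence by the cycle $(1)\Rightarrow(2)\Rightarrow(3)\Rightarrow(4)\Rightarrow(1)$ and then to read off existence and uniqueness from the scalar equation determining $n_\ast$.

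For $(1)\Rightarrow(2)$, I would test \eqref{eqstatn}, \eqref{eqstatp} and \eqref{eqstatntr} respectively with the logarithms $\ln(n_\infty/(n_0\mu_n))$, $\ln(p_\infty/(p_0\mu_p))$ and $\ln(n_{tr,\infty}/(1-n_{tr,\infty}))$. Since $(n_\infty,p_\infty,n_{tr,\infty})\in X$ provides a uniform positive lower bound on $n_\infty,p_\infty$ together with a separation of $n_{tr,\infty}$ from $\{0,1\}$, these logarithms lie in $L^\infty(\Omega)$ and the integrations by parts in the first two equations (using \eqref{BC}) are well defined. Using $\nabla\ln(n_\infty/(n_0\mu_n))=J_n/n_\infty$ and its analogue for $p$, one obtains $\int_\Omega R_n\ln(n_\infty/(n_0\mu_n))\,dx=\int_\Omega|J_n|^2/n_\infty\,dx$, the analogous identity for $p$, and $\int_\Omega(R_p-R_n)\ln(n_{tr,\infty}/(1-n_{tr,\infty}))\,dx=0$. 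Comparing these three identities with the middle line of \eqref{eqproduction} yields $D(n_\infty,p_\infty,n_{tr,\infty})=0$ directly.

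For $(2)\Rightarrow(3)$, each of the four terms in the integrand of \eqref{eqproduction} is pointwise nonnegative: the fluxes enter as squares, and writing $R_n=(n_{tr}/\tau_n)(1-A)$ with $A=n(1-n_{tr})/(n_0\mu_n n_{tr})$ turns $-R_n\ln A$ into $(n_{tr}/\tau_n)(A-1)\ln A\ge 0$, with equality if and only if $A=1$, i.e.\ $R_n=0$; the $R_p$ term is handled identically. Thus $D=0$ forces $J_n=J_p=0$ and $R_n=R_p=0$ a.e.\ on $\Omega$. For $(3)\Rightarrow(4)$, $J_n=0$ on the connected domain $\Omega$ yields $n_\infty/\mu_n\equiv n_\ast$ and similarly $p_\infty/\mu_p\equiv p_\ast$; then $R_n=0$ gives $n_{tr,\infty}=n_\ast/(n_\ast+n_0)$, $R_p=0$ gives $n_{tr,\infty}=p_0/(p_\ast+p_0)$, and equating these two expressions is equivalent to $n_\ast p_\ast=n_0p_0$. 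The closing step $(4)\Rightarrow(1)$ is then an immediate substitution into \eqref{eqstat}.

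For existence and uniqueness of the pair $(n_\ast,p_\ast)$, I substitute $p_\ast=n_0p_0/n_\ast$ and $n_{tr,\infty}=n_\ast/(n_\ast+n_0)$ into the conservation law \eqref{eqequilibrium3}, which reduces the problem to the scalar equation $f(n_\ast)=M$ for the function $f$ defined in the statement. Term-by-term differentiation gives
\[
f'(n_\ast)=\ol{\mu_n}+\frac{n_0p_0\,\ol{\mu_p}}{n_\ast^2}+\frac{\varepsilon n_0}{(n_\ast+n_0)^2}>0,
\]
so $f$ is strictly increasing on $(0,\infty)$, and the two stated asymptotic limits are immediate from the dominant terms at $n_\ast\to 0^+$ and $n_\ast\to\infty$. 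The intermediate value theorem then produces a unique $n_\ast>0$ with $f(n_\ast)=M$, and the resulting equilibrium lies in $X$ since \eqref{eqequilibrium} yields $n_\infty,p_\infty\ge\min(n_\ast,p_\ast)e^{-V}>0$ and $n_{tr,\infty}\in(0,1)$. The relations in \eqref{eqequiformulae} are just the two algebraic rearrangements of $R_n=R_p=0$. I expect the main bookkeeping obstacle to lie in step $(1)\Rightarrow(2)$, verifying that the three test-function identities recombine exactly into the logarithmic factors appearing in the last line of \eqref{eqproduction}; the separation $n_{tr,\infty}\in[\gamma,1-\gamma]$ built into $X$ is precisely what legitimises this purely algebraic manipulation.
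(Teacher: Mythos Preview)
Your proposal is correct and follows essentially the same cycle $(1)\Rightarrow(2)\Rightarrow(3)\Rightarrow(4)\Rightarrow(1)$ as the paper, with the same choice of test functions and the same pointwise sign analysis of the entropy production integrand. One small tightening: for the integration by parts in $(1)\Rightarrow(2)$ you need the test functions $\ln(n_\infty/(n_0\mu_n))$ and $\ln(p_\infty/(p_0\mu_p))$ to lie in $H^1(\Omega)$, not merely $L^\infty(\Omega)$; this follows from $n_\infty,p_\infty\in H^1(\Omega)$ together with the uniform lower bound $\gamma$, exactly as the paper notes.
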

\begin{remark}
The characterisation of the equilibria of Theorem \ref{theoremequilibrium} can be further improved. 
The below Proposition \ref{propbounds} will  prove 
that for all $M \in \mathbb{R}$ the solutions $n_\ast, p_\ast$ of 
\eqref{eqequilibrium}--\eqref{eqequilibrium3}
are uniformly positive and bounded for all $\varepsilon\in(0,\varepsilon_0]$, i.e. that there exist constants $\gamma(\varepsilon_0, M, n_0, p_0, V)$ and $\Gamma(\varepsilon_0, M, n_0, p_0, V)$ such that
$$
0 < \gamma(\varepsilon_0, M, n_0, p_0, V) \le n_\ast, p_\ast
\le \Gamma(\varepsilon_0, M, n_0, p_0, V) < \infty
$$
for all $\varepsilon\in(0,\varepsilon_0]$ and arbitrary $\varepsilon_0>0$. 
Note that the above bounds also imply that any relevant equilibrium state $(n_\infty,p_\infty,n_{tr,\infty})$ to \eqref{eqsystem}--\eqref{eqpot}  with  
$J_n(n_\infty) = J_p(p_\infty) = R_n(n_\infty,n_{tr,\infty}) = R_p(p_\infty,n_{tr,\infty}) = 0$ a.e. in $\Omega$ lies necessarily in the function space $X$ for a suitable choice $\gamma>0$. 
\end{remark}

The main result of this article is to prove exponential convergence to the unique positive equilibrium $(n_\infty, p_\infty, n_{tr,\infty})$ for solutions of system \eqref{eqsystem}--\eqref{eqpot} 
and to obtain explicit bounds for the rates and constants of convergence. Following the idea of the so-called entropy method, we aim to derive a functional inequality of the form
\[
E(n,p,n_{tr}) - E(n_\infty, p_\infty, n_{tr,\infty}) \leq C\,D(n,p,n_{tr}),
\]
where $n$, $p$ and $n_{tr}$ are non-negative functions satisfying the conservation law \eqref{eqconslaw}, and $C>0$ is a constant which we shall estimate explicitly. This approach, which establishes an upper bound for the relative entropy in terms of the entropy production, is referred to as the entropy method. Using a Gronwall-argument, the entropy-entropy production (EEP) inequality applied to the entropy-entropy production law \eqref{weakEntropy} entails exponential decay of the relative entropy. Finally, by using a Csisz\'ar--Kullback--Pinsker-type estimate, we deduce exponential convergence in $L^1$ for solutions to system \eqref{eqsystem}.
\medskip

The derivation of an EEP-estimate is quite an involved task in our situation. The crucial part is the proof of a functional EEP-inequality, which is first shown in the special case of spatially homogeneous concentrations, which fulfil the conservation law \eqref{eqconslaw} and the natural $L^1$-bounds (cf. Proposition \ref{propeedodeconst}). This core estimate is then extended to the case of arbitrary concentrations  satisfying the same assumptions in Proposition \ref{propeedpde}.

Based on these preliminary results, Theorem \ref{theoremeed} formulates the key EEP-inequality, which is the main ingredient of the entropy method for proving exponential convergence to the equilibrium. Note that our method allows for an expression of the associated constant $C_{\mathrm{EEP}}$  in the subsequent estimate \eqref{eqeedineqabstract}, which is independent of $\varepsilon$ for all $\varepsilon \in (0, \varepsilon_0]$ and for any $\varepsilon_0 > 0$. As a consequence, also the convergence rate of the relative entropy is independent of $\varepsilon$ in this sense.

\begin{theorem}[Entropy-Entropy Production Inequality]
\label{theoremeed}
Let $\varepsilon_0$, $\tau_n$, $\tau_p$, $n_0$, $p_0$, $M_1$ and $V$ be positive constants and consider $M\in \mathbb{R}$. 

Then, for all $\varepsilon \in (0, \varepsilon_0]$ there exists an explicitly computable constant $C_{\mathrm{EEP}} > 0$ such that for all non-negative functions $(n,p,n_{tr}) \in L^1(\Omega)^3$ satisfying $\|n_{tr}\|_{L^{\infty}(\Omega)} \leq 1$, 
the conservation law 
\[
\ol n - \ol p + \varepsilon \ol{n_{tr}} = M
\]
and the $L^1$-bound 
\[
\ol n, \ol p \leq M_1,
\]
the following entropy-entropy production inequality holds true:
\begin{equation}
\label{eqeedineqabstract}
E(n,p,n_{tr}) - E(n_\infty, p_\infty, n_{tr,\infty}) \leq C_{\mathrm{EEP}} D(n,p,n_{tr}),
\end{equation}
where the equilibrium $(n_\infty, p_\infty, n_{tr,\infty}) \in X$ is given in  Theorem \ref{theoremequilibrium}.
\end{theorem}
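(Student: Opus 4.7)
The plan is to combine the two preparatory propositions---\ref{propeedodeconst} (spatially homogeneous case) and \ref{propeedpde} (extension to arbitrary profiles)---into the stated estimate via an ``additivity-defect'' decomposition of the relative entropy. Working in the equilibrium-adapted variables $N := n/\mu_n$ and $P := p/\mu_p$ (so that $N_\infty\equiv n_\ast$, $P_\infty\equiv p_\ast$, and the fluxes become weighted Fisher informations, $|J_n|^2/n = \mu_n|\nabla N|^2/N$), and writing $\langle N\rangle := \ol{n}/\ol{\mu_n}$, $\langle P\rangle := \ol{p}/\ol{\mu_p}$, I would split
\[
E(n,p,n_{tr})-E_\infty = \bigl[E(n,p,n_{tr})-E(\langle N\rangle\mu_n,\langle P\rangle\mu_p,\ol{n_{tr}})\bigr] + \bigl[E(\langle N\rangle\mu_n,\langle P\rangle\mu_p,\ol{n_{tr}})-E_\infty\bigr]
\]
and treat the spatial bracket and the mean bracket separately.

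The spatial bracket I would control via the $\mu_n$- and $\mu_p$-weighted logarithmic Sobolev inequalities (available since $V_n,V_p\in W^{2,\infty}$ makes the weights pointwise comparable to $1$),
\[
\int_\Omega\!\mu_n\bigl(N\ln\tfrac{N}{\langle N\rangle}-(N-\langle N\rangle)\bigr)dx \le C_{\mathrm{LSI}}\int_\Omega\!\mu_n\,\frac{|\nabla N|^2}{N}\,dx = C_{\mathrm{LSI}}\int_\Omega\!\frac{|J_n|^2}{n}\,dx,
\]
and similarly for $P$, while the $\varepsilon$-weighted $n_{tr}$-contribution is collapsed onto its spatial average by Jensen's inequality applied to the convex function $n_{tr}\mapsto\int_{1/2}^{n_{tr}}\ln(s/(1-s))\,ds$. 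This absorbs the spatial bracket into the diffusive part of $D$ with constants independent of $\varepsilon$. The mean bracket, in turn, is bounded by Proposition~\ref{propeedodeconst} applied to the constants $(\ol{n},\ol{p},\ol{n_{tr}})$, which by construction obey the same conservation law and $L^1$-bound; the proposition yields a multiple of the reaction entropy production $D_{\mathrm{reac}}(\ol n,\ol p,\ol{n_{tr}})$ evaluated at these averages.

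The main obstacle---and the last remaining step---is the Jensen/Poincar\'e bridge
\[
D_{\mathrm{reac}}(\ol n,\ol p,\ol{n_{tr}}) \le C\,D(n,p,n_{tr}),
\]
which compares the reaction entropy production at averages with the true one, absorbing the oscillation error into the flux terms already spent. This is delicate because the integrands $R_n\ln\bigl(\tfrac{n(1-n_{tr})}{n_0\mu_n n_{tr}}\bigr)$ and $R_p\ln\bigl(\tfrac{pn_{tr}}{p_0\mu_p(1-n_{tr})}\bigr)$ are highly nonlinear in all three arguments, and the logarithmic factors become unbounded as $n_{tr}\to0,1$ or $n,p\to 0$. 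My intention is to use elementary convexity inequalities of the form $(a-b)\ln(a/b)\ge 4(\sqrt a-\sqrt b)^2$ to rewrite each reaction contribution as an $L^2$-type quantity in $\sqrt N$, $\sqrt P$ and $n_{tr}$, then compare it to its value at the averages via the $\mu_n$- resp.~$\mu_p$-weighted Poincar\'e inequality, the resulting oscillation being itself controlled by the same Fisher information bounds already used for the spatial bracket.

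Uniformity in $\varepsilon\in(0,\varepsilon_0]$ is automatic in this scheme: the flux and reaction parts of $D$ are $\varepsilon$-independent, the $\varepsilon$-weighted integral $\varepsilon\int_{1/2}^{n_{tr}}\ln(s/(1-s))\,ds$ in $E$ is uniformly bounded by $\varepsilon_0$ times a universal constant (the integrand being bounded on $[0,1]$), and Proposition~\ref{propeedodeconst} is assumed $\varepsilon$-uniform by construction, so the final constant $C_{\mathrm{EEP}}$ depends only on $\varepsilon_0,\tau_n,\tau_p,n_0,p_0,M_1,M$ and $V$.
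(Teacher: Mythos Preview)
Your additivity-defect splitting into a spatial bracket and a mean bracket is a reasonable strategy in spirit, and the treatment of the $n$- and $p$-contributions to the spatial bracket via weighted log-Sobolev is correct (this is essentially Proposition~\ref{proplogsob}). But the scheme breaks at the $n_{tr}$-part of the spatial bracket. Jensen's inequality for the convex function $g(s)=\int_{1/2}^{s}\ln\bigl(\sigma/(1-\sigma)\bigr)\,d\sigma$ gives $\ol{g(n_{tr})}\ge g(\ol{n_{tr}})$, so the $n_{tr}$-contribution $\varepsilon\bigl[\ol{g(n_{tr})}-g(\ol{n_{tr}})\bigr]$ to the spatial bracket is \emph{non-negative}; it does not ``collapse''. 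You need an \emph{upper} bound for this spatial oscillation of $n_{tr}$, and nothing in the diffusive part of $D$ controls it, since $n_{tr}$ carries no flux.

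This is precisely the crux the paper addresses differently. Instead of comparing $n_{tr}$ to its own spatial average, the paper compares $\sqrt{n_{tr}}$ pointwise to the constant equilibrium value $\sqrt{n_{tr,\infty}}$ (Proposition~\ref{propeedineq}), obtaining the term $\varepsilon\|\sqrt{n_{tr}}-\sqrt{n_{tr,\infty}}\|^2$. This is then passed through Proposition~\ref{propeedpde}, whose key additional ingredient is the \emph{indirect diffusion transfer} of Proposition~\ref{propdifftrans}: using the pointwise constraint $c^2+d^2=1$ (with $c=\sqrt{n_{tr}}$, $d=\sqrt{1-n_{tr}}$), one obtains
\[
\|c-\ol c\|^2\le 4\bigl(\|bc-d\|^2+\|b-\ol b\|^2\bigr),\qquad
\|d-\ol d\|^2\le 4\bigl(\|ad-c\|^2+\|a-\ol a\|^2\bigr),
\]
so the spatial deviation of $n_{tr}$ is controlled by the reaction terms $\|ad-c\|^2$, $\|bc-d\|^2$ together with the Poincar\'e-controlled deviations of $a=\sqrt{n/(n_0\mu_n)}$ and $b=\sqrt{p/(p_0\mu_p)}$. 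Since $\|ad-c\|^2$ and $\|bc-d\|^2$ are bounded \emph{pointwise} by the reaction entropy production via $(\sqrt{x}-1)^2\le(x-1)\ln x$, no ``Jensen/Poincar\'e bridge'' comparing $D$ at averages to the true $D$ is needed at all. Your proposed bridge $D_{\mathrm{reac}}(\ol n,\ol p,\ol{n_{tr}})\le C\,D(n,p,n_{tr})$ is indeed delicate---and would itself require something like Proposition~\ref{propdifftrans} to handle the $n_{tr}$-oscillations hidden in the logarithms---whereas the paper sidesteps it entirely by working with the full $L^2$-reaction quantities from the outset.
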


\begin{remark}
We point out that the functions $(n,p,n_{tr})$ considered in Theorem 
\ref{theoremeed} are not necessarily solutions of \eqref{eqsystem}--\eqref{eqpot}, although we have to assume 
that the functions $(n,p,n_{tr})$ share some few natural properties like the $L^1$-bound. In particular, we emphasise that the above entropy-entropy production inequality \eqref{eqeedineqabstract} does not depend on the 
lower and upper solution bounds \eqref{npLinfty}--\eqref{npbounds}, which are only needed to prove that any solution to \eqref{eqsystem}--\eqref{eqpot} satisfies the weak entropy production law 
\eqref{weakEntropy}. 
\end{remark}

The following main result (Theorem \ref{theoremconvergence}) establishes exponential convergence to equilibrium in relative entropy and in $L^1$. We stress that the convergence rate, subsequently denoted by $K$, is uniformly positive for all $\varepsilon \in (0, \varepsilon_0]$ and arbitrary $\varepsilon_0 > 0$. Up to our knowledge, this is the first time where the entropy method has successfully been applied uniformly in a fast-reaction parameter. 

Moreover, the relative entropy and the $L^1$-distance to the equilibrium of $n$ and $p$ can be estimated from above independent of $\varepsilon$ for $\varepsilon \in (0, \varepsilon_0]$. Only $\|n_{tr} - n_{tr,\infty}\|_{L^1(\Omega)}$ is multiplied with a prefactor $\varepsilon$.

\begin{theorem}[Exponential convergence]
\label{theoremconvergence}
Let $(n,p,n_{tr})$ be a global weak solution of system \eqref{eqsystem} 
as given in Theorem \ref{theoremsolution}
corresponding to the non-negative initial data $(n_I,p_I,n_{tr,I}) \in L^\infty(\Omega)^3$ satisfying $\|n_{tr,I}\|_{L^{\infty}(\Omega)} \leq 1$. 
Then, this solution satisfies the entropy-production law 
\[
E(n,p,n_{tr})(t_1) + \int_{t_0}^{t_1} D(n, p, n_{tr})(s) \, ds = E(n,p,n_{tr})(t_0)
\]
for all $0 < t_0 \leq t_1 < \infty$. 

Moreover, the following versions of the exponential decay towards the equilibrium $(n_\infty, p_\infty, n_{tr,\infty}) \in X$ from Theorem \ref{theoremequilibrium} hold true:
\[
E(n,p,n_{tr})(t) - E_\infty \leq (E_I - E_\infty) e^{-K t},
\]
where $E_I$ and $E_\infty$ denote the initial entropy and the equilibrium entropy of the system, respectively.
Moreover,
\begin{equation}\label{expo}
\| n - n_\infty \|_{L^1(\Omega)}^2 + \| p - p_\infty \|_{L^1(\Omega)}^2 + \varepsilon \| n_{tr} - n_{tr,\infty} \|_{L^1(\Omega)}^2 \leq C (E_I - E_\infty) e^{-K t}
\end{equation}
where $C := C_{\mathrm{CKP}}^{-1}$ and $K := C_{\mathrm{EEP}}^{-1}$ are explicitly computable constants independent of $\varepsilon \in (0, \varepsilon_0]$ for arbitrary $\varepsilon_0 > 0$ (cf. Theorem \ref{theoremeed} and Proposition \ref{propentropyl1}). 
\end{theorem}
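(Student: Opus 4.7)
The plan is a standard entropy-method Gronwall argument once three ingredients are in place: the rigorous weak entropy-production law, a uniform $L^1$ a priori bound that verifies the hypotheses of Theorem \ref{theoremeed}, and the Csisz\'ar--Kullback--Pinsker-type bound of Proposition \ref{propentropyl1}. First I would establish the weak EEP law \eqref{weakEntropy} on any $[t_0,t_1]\subset(0,\infty)$. The entropy production $D$ involves logarithmic terms $\ln(n/(n_0\mu_n))$, $\ln(p/(p_0\mu_p))$, and $\ln(n_{tr}/(1-n_{tr}))$ that are genuinely singular as $n,p,n_{tr}\to 0$ or $n_{tr}\to 1$; to handle them I would use the quantitative lower and upper bounds \eqref{ntrbounds} and \eqref{npbounds} (yielding $n_{tr}(t,x)\in[\gamma_0,1-\gamma_0]$ and $n(t,x),p(t,x)\geq\Gamma_0$ uniformly on $[t_0,t_1]\times\Omega$, with constants depending only on $t_0$) together with the $L^\infty$ upper bound \eqref{npLinfty}. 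The formal chain rule leading to \eqref{eqproduction} can then be justified by testing the equations for $n$ and $p$ with truncated logarithms $\ln_\delta(n/(n_0\mu_n))$ and $\ln_\delta(p/(p_0\mu_p))$, exploiting the regularity \eqref{eqsolutionnp}--\eqref{eqsolutionntr}, and passing to the limit $\delta\to 0$ by dominated convergence; the ODE for $n_{tr}$ is handled pointwise in $x$.

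Since $D\geq 0$, the weak EEP law just derived implies $E(n,p,n_{tr})(t)\leq E_I<\infty$ for every $t>0$. Each of the three contributions in $E$ is non-negative, so in particular $\int_\Omega\bigl(n\ln(n/(n_0\mu_n))-(n-n_0\mu_n)\bigr)\,dx\leq E_I$. The elementary inequality $a\ln(a/b)-(a-b)\geq a-C(b)$ valid for $a$ large (with $b=n_0\mu_n$ uniformly bounded above and below by \eqref{eqpot}) then yields uniform-in-$t$ and uniform-in-$\varepsilon$ bounds $\ol n(t), \ol p(t)\leq M_1=M_1(E_I,V,n_0,p_0)$. Combined with the non-negativity of $n,p,n_{tr}$, the pointwise bound $\|n_{tr}(t)\|_{L^\infty}\leq 1$ from Theorem \ref{theoremsolution}, and the conservation law \eqref{eqconslaw}, all hypotheses of Theorem \ref{theoremeed} are met for almost every $t>0$, which gives
\[
D(n,p,n_{tr})(t)\geq C_{\mathrm{EEP}}^{-1}\bigl(E(n,p,n_{tr})(t)-E_\infty\bigr)\quad\text{for a.a. } t>0.
\]

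Substituting this into the weak EEP law on $[t_0,t_1]\subset(0,\infty)$ and applying Gronwall's inequality in integral form yields $E(t)-E_\infty\leq(E(t_0)-E_\infty)\,e^{-K(t-t_0)}$ with $K=C_{\mathrm{EEP}}^{-1}$. Passing $t_0\to 0^+$, the continuity of $E$ at zero---a consequence of the temporal continuity of $n,p$ in $L^2(\Omega)$ and of $n_{tr}$ in $L^\infty(\Omega)$ from Theorem \ref{theoremsolution}, together with the $L^\infty$ bounds \eqref{npLinfty} used to dominate the logarithmic integrand in $E$---delivers the stated exponential entropy decay for all $t\geq 0$. The bound \eqref{expo} then follows by applying Proposition \ref{propentropyl1}, which estimates the left-hand side of \eqref{expo} from above by $C_{\mathrm{CKP}}^{-1}(E-E_\infty)$ with $C_{\mathrm{CKP}}$ uniform in $\varepsilon\in(0,\varepsilon_0]$. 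I expect the main obstacle to be the first step: although \eqref{ntrbounds}--\eqref{npbounds} remove the ultimate singularities, the rigorous derivation of \eqref{weakEntropy} requires a careful truncation/approximation argument in which one simultaneously controls the singular logarithmic test functions and the quadratic flux terms $|J_n|^2/n$ and $|J_p|^2/p$. The subsequent Gronwall and CKP steps are essentially mechanical, and the $\varepsilon$-uniformity of $K$ and $C$ is inherited directly from the $\varepsilon$-uniform constants in Theorem \ref{theoremeed} and Proposition \ref{propentropyl1}.
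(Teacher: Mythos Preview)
Your proposal is correct and follows essentially the same route as the paper: justify the weak entropy-production law on compact subintervals of $(0,\infty)$ via the positivity bounds \eqref{ntrbounds}--\eqref{npbounds}, feed the EEP inequality of Theorem \ref{theoremeed} into an integral Gronwall argument, pass $t_0\to 0^+$ by continuity of the entropy, and conclude with the CKP bound of Proposition \ref{propentropyl1}. The only cosmetic difference is that the paper writes out the Gronwall step explicitly in the style of Willett rather than invoking an integral Gronwall lemma, and it outsources the $L^1$-bound $\ol n,\ol p\le M_1$ to Lemma \ref{lemmal1bounds}, which is exactly the entropy-monotonicity argument you sketch.
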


\begin{corollary}\label{coro}
The solutions  $n$ and $p$ of Theorem \ref{theoremsolution} are uniformly-in-time bounded in $L^{\infty}$, i.e. there exists a constant $K > 0$ such that 
\begin{equation}\label{npuniform}
\|n(t,\cdot)\|_{\infty} , \|p(t,\cdot)\|_{\infty}\le K \qquad \text{for all}\quad t\ge0.
\end{equation}
This global bound follows from the exponential convergence \eqref{expo} in $L^1$ to the bounded equilibrium $(n_\infty, p_\infty,$ $n_{tr,\infty})$  
and the linearly growing $L^{\infty}$-bounds \eqref{npLinfty} via an interpolation argument.

Moreover, the bounds \eqref{npuniform} allow to improve the bounds \eqref{ntrbounds}, \eqref{npbounds}
and to obtain uniform-in-time bounds in the sense that 
for all $\tau>0$, there exist sufficiently small constants $\eta, \gamma, \mu, \Gamma > 0$ such that 
\begin{equation}\label{ntruniform}
n_{tr}(t,x) \in \left[\min\bigl\{\eta t, \gamma\bigr\}, \max\bigl\{1-\eta t,1-\gamma\bigr\}\right]
\end{equation}
and
\begin{equation}\label{nploweruniform}
n(t,x),p(t,x) \ge \min\Bigl\{\mu\frac{ t^2}{2}, \Gamma\Bigr\}
\end{equation}
for all $t\ge0$ and a.a. $x\in\Omega$ where $\eta t$ and $\gamma$ as well as $\mu t^2/2$ and $\Gamma$ intersect at time $\tau > 0$. 
\end{corollary}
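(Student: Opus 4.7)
The corollary consists of two claims: the uniform-in-time $L^\infty$ bound \eqref{npuniform} on $n$ and $p$, and the improved $\varepsilon$-uniform bounds \eqref{ntruniform}--\eqref{nploweruniform}. The plan is to prove the $L^\infty$-bound first and then harvest the remaining bounds essentially by replaying the arguments of Theorem~\ref{theoremsolution} with \emph{bounded} rather than \emph{linearly growing} coefficients.

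For the uniform $L^\infty$-bound, I would combine the exponential $L^1$-convergence \eqref{expo} to the bounded equilibrium, yielding $\|n(t)-n_\infty\|_{L^1(\Omega)}\le C_1 e^{-Kt/2}$, with the linear growth \eqref{npLinfty}, giving $\|n(t)-n_\infty\|_{L^\infty(\Omega)}\le C_2(1+t)$. The elementary interpolation $\|f\|_{L^q}^q \le \|f\|_{L^1}\|f\|_{L^\infty}^{q-1}$ then produces
$$
\|n(t)-n_\infty\|_{L^q(\Omega)} \le (C_1 e^{-Kt/2})^{1/q}\, (C_2(1+t))^{(q-1)/q},
$$
which is bounded uniformly in $t\ge 0$ for every fixed $q\in(1,\infty)$, so $\|n(t)\|_{L^q(\Omega)}$ is uniformly bounded. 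This still falls short of an $L^\infty$-bound; the upgrade requires parabolic regularity. The equation $\partial_t n = \Delta n + \nabla\cdot(n\nabla V_n) + R_n$ has bounded drift (since $V_n\in W^{2,\infty}$) and reaction $-C n \le R_n \le 1/\tau_n$, so applying standard Moser iteration (or, equivalently, $L^q$-to-$L^\infty$ smoothing for the parabolic semigroup associated with the drift-diffusion operator) on sliding intervals $[t,t+1]$ propagates the uniform $L^q$-bound on $n(t)$ into a uniform bound $\|n\|_{L^\infty([t+1/2,t+1]\times\Omega)}\le K$, provided $q$ is chosen sufficiently large depending on the spatial dimension $m$. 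The argument for $p$ is identical.

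With \eqref{npuniform} in hand, I would next revisit the pointwise ODE
$$
\varepsilon\,\partial_t n_{tr} = A(t,x)(1-n_{tr}) - B(t,x)\, n_{tr}, \quad A := \tfrac{1}{\tau_p}+\tfrac{n}{\tau_n n_0 \mu_n},\ B := \tfrac{1}{\tau_n}+\tfrac{p}{\tau_p p_0 \mu_p},
$$
whose coefficients $A,B$ are now uniformly bounded above (by the $L^\infty$-bound just established) and below (by the constant contributions $1/\tau_p$, $1/\tau_n$). Consequently $n_{tr}(t,x)$ relaxes on the time-scale $\varepsilon/(A+B)$ to the attracting value $A/(A+B)\in[\gamma_0,1-\gamma_0]$ uniformly in $(t,x)$, and matching with the small-time linear bound $\eta t$ from \eqref{ntrbounds} yields \eqref{ntruniform}. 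Once $n_{tr}\in[\gamma,1-\gamma]$ uniformly for $t\ge\tau$, the reaction $R_n$ admits the affine-in-$n$ lower bound $R_n\ge \gamma/\tau_n - C n$ with $C$ independent of $t$, and the same sub-solution/maximum-principle comparison that produced \eqref{npbounds} in Theorem~\ref{theoremsolution} now delivers the uniform positive lower bound \eqref{nploweruniform}; the analogous argument handles $p$. The one non-trivial ingredient in this whole scheme is the $L^q$-to-$L^\infty$ upgrade: despite the corollary's phrasing, pure interpolation cannot manufacture spatial regularity out of $L^1$ and $L^\infty$ data, and a genuine parabolic smoothing input is required---this is the one place where I expect the actual proof to demand more care than the one-line ``interpolation argument'' stated in the corollary.
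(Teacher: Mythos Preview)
Your proposal is correct and your diagnosis at the end is exactly right: the phrase ``interpolation argument'' in the corollary understates the work needed, and some parabolic regularity input is indispensable to close the $L^\infty$-bound. However, the paper handles this step differently from what you sketch.

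You interpolate $L^1$ and $L^\infty$ to get uniform-in-time $L^q$ bounds and then upgrade via Moser iteration on sliding time intervals. The paper instead first derives \emph{polynomially growing} $W^{1,q}$-bounds on $n,p$ (by testing the equation for $w=ne^{V_n}$ with $-(q-1)|\nabla w|^{q-2}\Delta w$ and using the linear $L^\infty$-growth \eqref{npLinfty}), and then invokes the Gagliardo--Nirenberg--Moser inequality
\[
\|n-n_\infty\|_{L^\infty}\le G\,\|n-n_\infty\|_{W^{1,2m}}^{1/2}\,\|n-n_\infty\|_{L^{2m}}^{1/2}
\le G\,\|n-n_\infty\|_{W^{1,2m}}^{1/2}\,\|n-n_\infty\|_{L^\infty}^{1/2-1/(4m)}\,\|n-n_\infty\|_{L^1}^{1/(4m)},
\]
so that the exponential $L^1$-decay beats the polynomial growth of the remaining factors. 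Your Moser-iteration route avoids gradient estimates altogether and is arguably more modular; the paper's route trades this for an explicit one-shot interpolation at the cost of the $W^{1,q}$ energy computation. For the second part of the corollary (the improved bounds \eqref{ntruniform}--\eqref{nploweruniform}), your plan to replay the ODE/comparison arguments of Theorem~\ref{theoremsolution} with bounded rather than linearly growing coefficients is precisely what the paper intends.
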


The final results of this paper consider  
the limit $\varepsilon \to 0$.
Up to our knowledge, Theorem \ref{theoremeed} is the 
first result of an entropy-entropy production inequality 
which holds uniformly in a fast-reaction parameter, 
i.e. uniformly for all $0<\varepsilon \le \varepsilon_0$. 
Intuitively, one thus expects the corresponding 
entropy method to extend to the limiting case $\varepsilon = 0$. The details of this singular limit are subject of the last part of this paper. In particular, one has to bypass the $\varepsilon$-dependency of the conservation law \eqref{eqconslaw}.

First, we point out that 
the limiting PDE system for $\varepsilon = 0$ is the following well known Shockley--Read--Hall drift-diffusion recombination model 
\begin{equation}
\label{eqsystem'}
\begin{cases}
\begin{aligned}
\partial_t n &= \nabla \cdot J_n(n) + R(n,p), \qquad J_n = \nabla n + n\nabla V_n, \\
\partial_t p &= \nabla \cdot J_p(p) + R(n,p), \qquad\ J_p = \nabla p + p\nabla V_p,
\end{aligned}
\end{cases}
\end{equation}
where 
\[
R(n, p) = \frac{1 - \frac{n p}{n_0 p_0 \mu_n \mu_p}}{\tau_n \big(1 + \frac{p}{p_0 \mu_p}\big) + \tau_p \big(1 + \frac{n}{n_0 \mu_n}\big)}.
\]
The existence theory of the Shockley--Read--Hall model 
follows from classical methods (see e.g. \cite{MRS90}) or 
can also be carried out similar to Theorem \ref{theoremsolution}.
Therefore, we state here the corresponding results without proof.
\begin{theorem'}{\ref*{theoremsolution}'}[Shockley--Read--Hall for $\varepsilon = 0$]
\label{theoremsolution'}
Under the assumptions of Theorem \ref{theoremsolution}, 
there exists a unique non-negative global weak solution 
$(n, p) \in \left(C([0, T], L^2(\Omega))\cap W_2(0,T) \cap L^\infty((0, T), L^\infty(\Omega))\right)^2, 
$ of system \eqref{eqsystem'}  for all $T \in (0, \infty)$ satisfying the boundary conditions \eqref{BC}.

Moreover, there exist positive constants 
$C_n(\|n_I\|_{\infty},V_n)$, $C_p(\|p_I\|_{\infty},V_p)$ and $K_n(V_n) $, $K_p(V_p)$ such that
\begin{equation}\label{npLinfty'}
\|n(t,\cdot)\|_{\infty} \le C_n + K_nt,\quad
\|p(t,\cdot)\|_{\infty} \le C_p + K_pt,\qquad
\text{for all } t\ge0.
\end{equation} 
Finally, there exist positive constants $\mu$, $\Gamma$, $\theta>0$ (depending on $\tau$, $C_n$, $C_p$, $K_n$, $K_p$, $V_n$, $V_p$) such that 
\begin{equation}\label{npbounds'}
n(t,x), \, p(t,x) \ge \min\Bigl\{\mu t, \frac{\Gamma}{1+\theta t}\Bigr\} \quad \text{for all } t\ge0 \text{ and a.a. } x\in\Omega
\end{equation}
where $\mu \tau= \frac{\Gamma}{1+\theta \tau}$ such that  the bounds $\mu t$ and $\Gamma/(1 + \theta t)$ intersect at time $\tau$.
\end{theorem'}

Secondly, the entropy functional \eqref{eqentropy} extends continuously to the limit $\varepsilon = 0$:
\[
E_0(n, p) \coleq \int_{\Omega} \left( n \ln \frac{n}{n_0 \mu_n} - (n-n_0\mu_n) + p \ln \frac{p}{p_0 \mu_p} - (p-p_0\mu_p) \right) dx,
\]
which is indeed an entropy (the free energy) functional of the Shockley--Read--Hall model with the entropy production (free energy dissipation) functional
\begin{equation}\label{eplimit}
D_0(n,p) \coleq -\frac{d}{dt} E_0(n, p) 
= \int_{\Omega} \left(\frac{|J_n|^2}{n} + \frac{|J_p|^2}{p} 
- R \ln \left(\frac{np}{n_0 \mu_n p_0\mu_p}\right)
 \right) dx
\ge 0. 
\end{equation} 
Next, we define
$n^{eq}_{tr}=n^{eq}_{tr}(n,p)$ such that $R_n(n, n_{tr}^{eq}) = R_p(p, n_{tr}^{eq})$, i.e.
\begin{equation}\label{ntreq}
n^{eq}_{tr} \coleq \frac{\tau_n + \tau_p \frac{n}{n_0 \mu_n}}{\tau_n + \tau_p + \tau_n \frac{p}{p_0 \mu_p} + \tau_p \frac{n}{n_0 \mu_n}},
\end{equation}
and $n^{eq}_{tr}(n,p)$ denotes the pointwise equilibrium value of the trapped states in \eqref{eqsystem} for fixed $n$ and $p$, which corresponds to $\varepsilon = 0$. 

Moreover, we observe that the Shockley--Read--Hall entropy production 
functional \eqref{eplimit} can be identified as 
the entropy production functional $D(n, p, n^{eq}_{tr})$ along trajectories of \eqref{eqsystem} with $\varepsilon = 0$ in the sense that $n_{tr}\equiv n^{eq}_{tr}(n,p)$:
\begin{align*}
D(n, p, n^{eq}_{tr}) 
&= \int_{\Omega} \left(\frac{|J_n|^2}{n} + \frac{|J_p|^2}{p} - R_n \ln \left( \frac{n(1-n^{eq}_{tr})}{n_0 \mu_n n^{eq}_{tr}} \right) - R_p \ln \left( \frac{p n^{eq}_{tr}}{p_0 \mu_p (1-n^{eq}_{tr})} \right) \right) dx \\
&= \int_{\Omega} \left(\frac{|J_n|^2}{n} + \frac{|J_p|^2}{p} - R \ln \left( \frac{np}{n_0 \mu_n p_0 \mu_p } \right) \right) dx=
D_0(n,p)
\end{align*}
where 
one uses $R=R_n=R_p$ at $n_{tr}=n^{eq}_{tr}$ and that the involved integrals are finite.

Analog to Theorem \ref{theoremequilibrium}, 
there exists a unique equilibrium $(n_{\infty,0}, p_{\infty,0}) \in X_0$ in the case $\varepsilon = 0$, where 
\[
X_0 := \{ (n, p) \in H^1(\Omega)^2 \, \big| \, \ol n - \ol p = M \land (\exists \, \gamma > 0) \, n, p \geq \gamma \, \mbox{a.e.} \land n_{tr}^{eq} \in [\gamma, 1-\gamma] \, \mbox{a.e.}\}.
\]
This equilibrium reads
\begin{equation}
\label{eqequilibrium0}
n_{\infty,0} = n_{\ast,0} e^{-V_n}, \qquad p_{\infty,0} = p_{\ast,0} e^{-V_p}, 
\end{equation}
where $n_{\ast,0}, p_{\ast,0}>0$ are uniquely determined by
\begin{equation*}
n_{\ast,0} p_{\ast,0} = n_0 p_0
\qquad\text{and}\qquad
n_{\ast,0} \ol{\mu_n} - p_{\ast,0} \ol{\mu_p} = M.
\end{equation*}
\medskip

We are now in the position to formulate the EEP-inequality 
\[
E_0(n, p) - E_0(n_{\infty,0}, p_{\infty,0}) \leq C_{EEP} D_0(n, p)
\]
involving the entropy $E_0$ and its production $D_0$ by applying an appropriate limiting argument to the EEP-inequality from Theorem \ref{theoremeed}.

\begin{theorem'}{\ref*{theoremeed}'}[Entropy-Entropy Production Inequality for $\varepsilon = 0$]
\label{theoremeed'}
Let $\tau_n$, $\tau_p$, $n_0$, $p_0$, $M_1$ and $V$ be positive constants and consider $M\in \mathbb{R}$.

Then, recalling the equilibrium $(n_{\infty,0}, p_{\infty,0}) \in X_0$, the following EEP-inequality holds true for all non-negative functions $(n,p) \in L^1(\Omega)^2$ satisfying 
the conservation law 
$
\ol n - \ol p = M,
$
the $L^1$-bound 
$
\ol n, \ol p < M_1
$
as well as the conditions
$E_0(n, p)<\infty$,
$D_0(n, p),D(n, p, n^{eq}_{tr}) < \infty$
for some $\varepsilon_0 > 0$:
\begin{equation}
\label{eqeedineqabstract'}
E_0(n,p) - E_0(n_{\infty,0}, p_{\infty,0}) \leq C_{\mathrm{EEP}} D_0(n,p),
\end{equation}
where $C_{\mathrm{EEP}} > 0$ is the same constant as in Theorem \ref{theoremeed}.
\end{theorem'}

\begin{theorem'}{\ref*{theoremconvergence}'}[Exponential convergence for $\varepsilon = 0$]
\label{theoremconvergence'}
Let $(n,p)$ be a global weak solution of system \eqref{eqsystem'} 
as given in Theorem \ref{theoremsolution'}
corresponding to the non-negative initial data $(n_I,p_I) \in L^\infty(\Omega)^2$. 
Then, this solution satisfies the entropy-production law 
\begin{equation}\label{wep'}
E_0(n,p)(t_1) + \int_{t_0}^{t_1} D_0(n, p)(s) \, ds = E_0(n,p)(t_0)
\end{equation}
for all $0 < t_0 \leq t_1 < \infty$. 

Moreover, the following versions of the exponential decay towards the equilibrium $(n_{\infty,0}, p_{\infty,0}) \in X_0$ hold true:
\[
E_0(n,p)(t) - E_\infty \leq (E_I - E_\infty) e^{-K t}
\]
and
\begin{equation}\label{expo'}
\| n - n_{\infty,0} \|_{L^1(\Omega)}^2 + \| p - p_{\infty,0} \|_{L^1(\Omega)}^2 \leq C (E_I - E_\infty) e^{-K t}
\end{equation}
where $C := C_{\mathrm{CKP}}^{-1}$ and $K := C_{\mathrm{EEP}}^{-1}$ are the same constants as in Theorem \ref{theoremconvergence}. Moreover, $E_I$ and $E_\infty$ denote the initial entropy of the system and the entropy in the equilibrium, respectively.
\end{theorem'}

\begin{remark}
We believe that the entropy-entropy production inequality \eqref{eqeedineqabstract'} can also be directly proven by combining estimates of Section \ref{sectionabstract} with previous works on the entropy method for 
detailed balanced reaction-diffusion models, see e.g.
\cite{DF08,DFM08,MHM15,FT17}. We emphasise, however, 
that one key novelty of Theorem \ref{theoremeed'} is to be able to 
derive an entropy-entropy production inequality via the fast-reaction parameter $\varepsilon\to 0$.
\end{remark}

In the same way as for strictly positive $\varepsilon > 0$, we can derive uniform-in-time $L^\infty$-bounds for $n$ and $p$ also in the case $\varepsilon = 0$. As before, these bounds further improve the lower bounds on $n$ and $p$.
\begin{corollary'}{\ref*{coro}'}\label{coro'}
There exists a constant $K > 0$ such that 
\begin{equation}\label{npuniform'}
\|n(t,\cdot)\|_{\infty} , \|p(t,\cdot)\|_{\infty}\le K \qquad \text{for all } t\ge0.
\end{equation}
And for all $\tau>0$ there exist sufficiently small constants $\mu, \Gamma > 0$ such that 
\begin{equation}\label{nploweruniform'}
n(t,x),p(t,x) \ge \min\left\{\mu t, \Gamma\right\}
\end{equation}
for all $t\ge0$ and a.a. $x\in\Omega$, where $\mu \tau=\Gamma$ such that the bounds $\mu t$ and $\Gamma$ intersect at time $\tau > 0$. 
\end{corollary'}

\underline{Outline:}
The remainder of the paper is organised in the following manner. Section \ref{sectionequilibrium} contains the proof of Theorem \ref{theoremequilibrium} as well as the result on the bounds of $n_\infty$, $p_\infty$ and $n_{tr,\infty}$. In Section \ref{sectionlemma}, we collect a couple of technical lemmata, and within Section \ref{sectionprop}, we state a preliminary proposition which serves as a first result towards an EEP-inequality. An abstract version of the EEP-estimate is proven in Section \ref{sectionabstract}, first for constant concentrations and based on that also for general concentrations. Section \ref{sectionconvergence} is concerned with the proofs of the EEP-inequality from Theorem \ref{theoremeed}, the announced exponential convergence from Theorem \ref{theoremconvergence} and the uniform $L^\infty$-bounds from Corollary \ref{coro}. Moreover, the proofs of Theorem \ref{theoremeed'} and Theorem \ref{theoremconvergence'} are also part of this section. Finally, the existence proofs of Theorem \ref{theoremsolution} and Theorem \ref{theoremsolution'} are contained in the Appendix.

\section{Properties of the equilibrium}
\label{sectionequilibrium}
\begin{proof}[\bf Proof of Theorem \ref{theoremequilibrium}]
We shall prove the equivalence of the statements in the Theorem by a circular reasoning. Assume that $(n_\infty, p_\infty, n_{tr,\infty}) \in X$ is a solution of the stationary system \eqref{eqstat}. In this case, \[
J_n(n_\infty), \, J_p(p_\infty), \, R_n(n_\infty,n_{tr,\infty}), \, R_p(p_\infty,n_{tr,\infty}) \in L^2(\Omega).
\]
We test equation \eqref{eqstatn} with $\ln (n_\infty/(n_0 \mu_n))$. Due to $n_\infty \in H^1(\Omega)$ and $n_\infty \geq \gamma$ a.e. in $\Omega$, the test function $\ln (n_\infty/(n_0 \mu_n))$ belongs to $H^1(\Omega)$. We find
\vspace{-1ex}
\begin{equation*}
0 
= \int_{\Omega} \left(\frac{|J_n(n_\infty)|^2}{n_\infty} - R_n(n_\infty,n_{tr,\infty}) \ln \left( \frac{n_\infty}{n_0 \mu_n} \right) \right) dx.
\end{equation*}
In the same way, we test equation \eqref{eqstatp} with $\ln (p_\infty/(n_0 \mu_p)) \in H^1(\Omega)$. This yields
\[
0 = \int_{\Omega} \left(\frac{|J_p(p_\infty)|^2}{p_\infty} - R_p(p_\infty,n_{tr,\infty}) \ln \left( \frac{p_\infty}{p_0 \mu_p} \right) \right) dx.
\]
Moreover, we multiply \eqref{eqstatntr} with $\ln (n_{tr,\infty}/(1-n_{tr,\infty})) \in L^2(\Omega)$, integrate over $\Omega$ and obtain
\[
0 = \int_{\Omega} \left( \big( R_n(n_\infty,n_{tr,\infty}) - R_p(p_\infty,n_{tr,\infty}) \big) \ln \left( \frac{n_{tr,\infty}}{1-n_{tr,\infty}} \right) \right) dx.
\]
Taking the sum of the three expressions above, we arrive at
\vspace{-1ex}
\begin{multline*}
D(n_\infty, p_\infty, n_{tr,\infty}) = \int_{\Omega} \Bigg(\frac{|J_n(n_\infty)|^2}{n_\infty} + \frac{|J_p(p_\infty)|^2}{p_\infty} \\
- R_n(n_\infty,n_{tr,\infty}) \ln \left( \frac{n_\infty(1-n_{tr,\infty})}{n_0 \mu_n n_{tr,\infty}} \right) - R_p(p_\infty,n_{tr,\infty}) \ln \left( \frac{p_\infty n_{tr,\infty}}{p_0 \mu_p (1-n_{tr,\infty})} \right) \Bigg) dx = 0.
\end{multline*}

A closer look at the formula above shows that 
\[
- R_n(n_\infty,n_{tr,\infty}) \ln \left( \frac{n_\infty(1-n_{tr,\infty})}{n_0 \mu_n n_{tr,\infty}} \right) \geq 0
\]
where equality holds if and only if $R_n(n_\infty,n_{tr,\infty}) = 0$. The same argument also applies to the other reaction term. Hence, the relation $D(n_\infty, p_\infty, n_{tr,\infty}) = 0$ immediately implies $J_n(n_\infty) = J_p(p_\infty) = R_n(n_\infty,n_{tr,\infty}) = R_p(p_\infty,n_{tr,\infty}) = 0$ a.e. in $\Omega$.

Because of $J_n(n_\infty) = J_p(p_\infty) = 0$, we know that
\[
n_\infty = n_\ast e^{-V_n}, \quad p_\infty = p_\ast e^{-V_p}
\]
with constants $n_\ast, p_\ast > 0$. Moreover, $R_n(n_\infty,n_{tr,\infty}) = R_p(p_\infty,n_{tr,\infty}) = 0$ gives rise to
\[
n_ {tr,\infty} = \frac{n_\ast}{n_0}(1 - n_{tr,\infty}), \quad 1 - n_{tr,\infty} = \frac{p_\ast}{p_0} n_{tr,\infty}.
\]
Consequently, $n_\ast p_\ast = n_0 p_0$ and 
\[
n_{tr,\infty} = \frac{n_\ast}{n_\ast + n_0} = \frac{p_0}{p_\ast + p_0} \in (0,1).
\]
The constants $n_\ast$ and $p_\ast$ are uniquely determined by the condition 
\[
n_\ast p_\ast = n_0 p_0
\]
and the conservation law 
\[
n_\ast \ol{\mu_n} - p_\ast \ol{\mu_p} + \varepsilon \, n_{tr,\infty} = M.
\]

Finally, the state
\[
n_\infty = n_\ast e^{-V_n}, \quad p_\infty = p_\ast e^{-V_p}, \quad n_{tr,\infty} = \frac{n_\ast}{n_\ast + n_0} = \frac{p_0}{p_\ast + p_0}
\]
obviously satisfies $J_n(n_\infty) = J_p(p_\infty) = R_n(n_\infty,n_{tr,\infty}) = R_p(p_\infty,n_{tr,\infty}) = 0$ a.e. in $\Omega$ which proves $(n_\infty, p_\infty, n_{tr, \infty})$ to be a solution of the stationary system.
\end{proof}

A key equilibrium property  
are the subsequent uniform bounds for $n_\ast$, $p_\ast$ and $n_{tr,\infty}$ for all $\varepsilon \in (0, \varepsilon_0]$.

\begin{proposition}[Uniform-in-$\varepsilon$ bounds on the equilibrium]
\label{propbounds}
Let $(n_\infty, p_\infty, n_{tr,\infty}) \in X$ be the unique positive equilibrium as characterised in Theorem \ref{theoremequilibrium}. 
Then, for all $M\in \mathbb{R}$ and for all $\varepsilon\in(0,\varepsilon_0]$ and arbitrary $\varepsilon_0>0$,
there exist various constants $\gamma \in (0, 1/2)$ and $\Gamma \in (1/2, \infty)$ depending only on $\varepsilon_0$, $n_0$, $p_0$, $M$, $V$ such that
\[
n_\ast, p_\ast \in [\gamma, \Gamma], \quad n_{tr,\infty} \in \left[\gamma, 1-\gamma \right]
\qquad\text{and}\qquad
n_\infty(x), p_\infty(x) \in [\gamma, \Gamma]
\]
for a.a. $x \in \Omega$.

\begin{proof} 
We recall the equilibrium conditions \eqref{eqequilibrium}--\eqref{eqequilibrium3} from Theorem \ref{theoremequilibrium}
and observe that in the equation 
\[
n_\ast \ol{\mu_n} - \frac{n_0 p_0 \ol{\mu_p}}{n_\ast} = M - \varepsilon n_{tr,\infty} = M - \varepsilon \, \frac{n_\ast}{n_\ast + n_0},
\]
the left hand side is strictly monotone increasing 
from $-\infty$ to $+\infty$ as $n_\ast\in(0,\infty)$, while the 
right hand side is strictly monotone decreasing 
and bounded between $(M,M-\varepsilon_0)$ as $n_\ast\in(0,\infty)$. Both monotonicity and unboundedness/boundedness imply  uniform positive lower and upper bounds for $n_\ast$ as explicitly proven in the following: 
First, we derive that
\begin{equation}
\label{eqninfformula}
n_\ast = \frac{M - \varepsilon n_{tr,\infty}}{2 \ol{\mu_n}} + \sqrt{ \frac{(M - \varepsilon n_{tr,\infty})^2}{4 \ol{\mu_n}^2} + \frac{n_0 p_0 \ol{\mu_p}}{\ol{\mu_n}}} > 0
\end{equation}
for all $\varepsilon \in (0,\varepsilon_0]$. Note that \eqref{eqninfformula} is not an explicit representation of $n_\ast$ since $n_{tr,\infty}$ depends itself on $n_\ast$. Because of $n_{tr,\infty} \in (0, 1)$, we further observe that
\[
n_\ast \leq \frac{|M - \varepsilon n_{tr,\infty}|}{2\ol{\mu_n}} + \sqrt{ \frac{(M - \varepsilon n_{tr,\infty})^2}{4\ol{\mu_n}^2}} + \sqrt{\frac{n_0 p_0 \ol{\mu_p}}{\ol{\mu_n}}} \leq \frac{|M| + \varepsilon_0}{\ol{\mu_n}} + \sqrt{\frac{n_0 p_0 \ol{\mu_p}}{\ol{\mu_n}}} \leq \beta < \infty,
\]
where $\beta=\beta(\varepsilon_0,n_0,p_0,M,V)$.
And as a result of the elementary inequality
$
\sqrt{a + b} \geq \sqrt{a} + \frac{b}{2 \sqrt{a} + \sqrt{b}}
$
for $a \geq 0$ and $b > 0$, we also conclude that
\[
n_\ast \geq \frac{M - \varepsilon n_{tr,\infty}}{2 \ol{\mu_n}} + \frac{|M - \varepsilon n_{tr,\infty}|}{2 \ol{\mu_n}} + \frac{\frac{n_0 p_0 \ol{\mu_p}}{\ol{\mu_n}}}{\frac{|M - \varepsilon n_{tr,\infty}|}{\ol{\mu_n}} + \sqrt{\frac{n_0 p_0 \ol{\mu_p}}{\ol{\mu_n}}}} \geq \frac{\frac{n_0 p_0 \ol{\mu_p}}{\ol{\mu_n}}}{\frac{|M| + \varepsilon_0}{\ol{\mu_n}} + \sqrt{\frac{n_0 p_0 \ol{\mu_p}}{\ol{\mu_n}}}} \geq \alpha > 0
\]
where $\alpha=\alpha(\varepsilon_0,n_0,p_0,M,V)$. Similar arguments show that corresponding bounds $\alpha$ and $\beta$ are also available for $p_\ast$. Hence,
\[
n_{tr,\infty} \in \left[ \frac{\alpha}{\alpha + n_0}, \frac{\beta}{\beta + n_0} \right].
\]
Due to $n_\infty = n_\ast e^{-V_n}$, $p_\infty = p_\ast e^{-V_p}$ and the $L^\infty$-bounds on $V_n$ and $V_p$, the claim of the Proposition follows.
\end{proof}
\end{proposition}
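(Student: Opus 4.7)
The plan is to exploit the explicit (semi-implicit) characterisation of the equilibrium from Theorem \ref{theoremequilibrium}, namely $n_\ast p_\ast = n_0 p_0$, $n_{tr,\infty} = n_\ast/(n_\ast + n_0) \in (0, 1)$, and $n_\ast \overline{\mu_n} - p_\ast \overline{\mu_p} + \varepsilon\, n_{tr,\infty} = M$, and to reduce everything to a single uniform bound on $n_\ast$. Eliminating $p_\ast$ via $p_\ast = n_0 p_0/n_\ast$, the conservation law collapses to
\[
n_\ast \overline{\mu_n} - \frac{n_0 p_0 \overline{\mu_p}}{n_\ast} = M - \varepsilon\, n_{tr,\infty},
\]
whose right-hand side lies in the $\varepsilon$-independent interval $[M - \varepsilon_0, M]$ because $n_{tr,\infty} \in (0, 1)$. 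The left-hand side is a strictly increasing bijection of $(0, \infty)$ onto $\mathbb{R}$, so solving the resulting quadratic yields the (still $n_{tr,\infty}$-dependent) formula
\[
n_\ast = \frac{(M - \varepsilon\, n_{tr,\infty}) + \sqrt{(M - \varepsilon\, n_{tr,\infty})^2 + 4\, \overline{\mu_n}\, n_0 p_0\, \overline{\mu_p}}}{2\, \overline{\mu_n}}.
\]

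From this expression the uniform upper bound $n_\ast \le \Gamma$ follows by the crude estimate $|M - \varepsilon\, n_{tr,\infty}| \le |M| + \varepsilon_0$ and subadditivity of the square root. The uniform lower bound $n_\ast \ge \gamma > 0$ is the delicate step, since when $M$ is very negative the numerator is a difference of two nearly equal positive quantities and a naive bound collapses. The remedy is the rationalisation identity $\sqrt{x^2 + y} - |x| = y/(\sqrt{x^2 + y} + |x|)$ for $y > 0$, equivalently the elementary inequality $\sqrt{a + b} \ge \sqrt{a} + b/(2\sqrt{a} + \sqrt{b})$, which turns the near-cancellation into a strictly positive quantity depending only on $|M| + \varepsilon_0$, $\overline{\mu_n}$, and $n_0 p_0 \overline{\mu_p}$. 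Because $\|V_n\|_{L^\infty}, \|V_p\|_{L^\infty} \le V$, the averages $\overline{\mu_n}, \overline{\mu_p}$ are sandwiched between $e^{-V}$ and $e^V$, so the resulting $\gamma, \Gamma$ depend only on $\varepsilon_0, n_0, p_0, M, V$, as claimed.

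The remaining bounds follow by propagation: the identity $p_\ast = n_0 p_0/n_\ast$ transports the two-sided bound on $n_\ast$ to one on $p_\ast$; the strictly increasing map $n_\ast \mapsto n_\ast/(n_\ast + n_0)$ sends $[\gamma, \Gamma]$ to a subinterval of $(0, 1)$ of the form $[\gamma', 1 - \gamma']$; and the pointwise representations $n_\infty = n_\ast e^{-V_n}$, $p_\infty = p_\ast e^{-V_p}$ together with $\|V_n\|_{L^\infty}, \|V_p\|_{L^\infty} \le V$ yield the a.e.\ bounds on $n_\infty, p_\infty$ after replacing $\gamma, \Gamma$ by $\gamma e^{-V}, \Gamma e^V$ respectively. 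The only real obstacle is the uniform lower bound on $n_\ast$; once the rationalisation step is in place, every subsequent estimate is a monotone composition applied to bounds that are already uniform in $\varepsilon \in (0, \varepsilon_0]$.
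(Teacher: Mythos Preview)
Your proposal is correct and follows essentially the same approach as the paper's proof: both reduce to the single equation $n_\ast \overline{\mu_n} - n_0 p_0 \overline{\mu_p}/n_\ast = M - \varepsilon\, n_{tr,\infty}$, solve the resulting quadratic, obtain the upper bound via subadditivity of the square root and $|M - \varepsilon\, n_{tr,\infty}| \le |M| + \varepsilon_0$, and obtain the crucial lower bound via the same elementary inequality $\sqrt{a+b} \ge \sqrt{a} + b/(2\sqrt{a} + \sqrt{b})$. Your propagation of the bounds to $p_\ast$, $n_{tr,\infty}$, and the pointwise quantities $n_\infty, p_\infty$ also matches the paper's reasoning.
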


\section{Some technical lemmata}
\label{sectionlemma}
A particularly useful relation between the concentrations $n$, $p$ and $n_{tr}$ is the following Lemma.

\begin{lemma}
\label{lemmanptontr}
The conservation law $\ol n - \ol p + \varepsilon \, \ol{n_{tr}} = M$ and the equilibrium condition \eqref{eqequiformulae} imply
\begin{equation}
\label{eqnptontr}
\forall \, t \geq 0: \quad (\ol n - \ol{n_\infty}) \ln \left( \frac{n_\ast}{n_0} \right) + (\ol p - \ol{p_\infty}) \ln \left( \frac{p_\ast}{p_0} \right) = \varepsilon (\ol{n_{tr}} - n_{tr,\infty}) \ln \left( \frac{1-n_{tr,\infty}}{n_{tr,\infty}} \right).
\end{equation}

\begin{proof}
With $\ol{n_\infty} - \ol{p_\infty} + \varepsilon \, n_{tr,\infty} = M$ (note that $n_{tr,\infty} = \ol{n_{tr,\infty}}$ is constant), we have $\ol p - \ol{p_\infty} = \ol n - \ol{n_\infty} + \varepsilon (\ol{n_{tr}} - n_{tr,\infty})$. We employ this relation to replace $\ol p - \ol{p_\infty}$ on the left hand side of \eqref{eqnptontr} and calculate 
\[
(\ol n - \ol{n_\infty}) \ln \left( \frac{n_\ast}{n_0} \right) + (\ol p - \ol{p_\infty}) \ln \left( \frac{p_\ast}{p_0} \right) = (\ol n - \ol{n_\infty}) \ln \left( \frac{n_\ast p_\ast}{n_0 p_0} \right) + \varepsilon (\ol{n_{tr}} - n_{tr,\infty}) \ln \left( \frac{p_\ast}{p_0} \right).
\]
Now, the first term on the right hand side vanishes due to $n_\ast p_\ast = n_0 p_0$
while we use $p_\ast/p_0 = (1-n_{tr,\infty}) / n_{tr,\infty}$ for the second term and obtain
\[
(\ol n - \ol{n_\infty}) \ln \left( \frac{n_\ast}{n_0} \right) + (\ol p - \ol{p_\infty}) \ln \left( \frac{p_\ast}{p_0} \right) = 
\varepsilon (\ol{n_{tr}} - n_{tr,\infty}) \ln \left( \frac{1-n_{tr,\infty}}{n_{tr,\infty}} \right)
\]
as claimed above. \qedhere
\end{proof}
\end{lemma}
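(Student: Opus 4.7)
The plan is to turn this into a short algebraic manipulation that combines the time-dependent conservation law with its equilibrium counterpart and then invokes the two algebraic relations from \eqref{eqequilibrium2} and \eqref{eqequiformulae}.

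First I would subtract the two conservation laws. Applying $\ol n - \ol p + \varepsilon \ol{n_{tr}} = M$ at the current state and at the equilibrium state (using that $n_{tr,\infty}$ is a constant, hence equal to its average) yields
\[
(\ol n - \ol{n_\infty}) - (\ol p - \ol{p_\infty}) + \varepsilon(\ol{n_{tr}} - n_{tr,\infty}) = 0,
\]
so I can eliminate $\ol p - \ol{p_\infty}$ by writing it as $(\ol n - \ol{n_\infty}) + \varepsilon(\ol{n_{tr}} - n_{tr,\infty})$. Substituting into the left-hand side of \eqref{eqnptontr} produces
\[
(\ol n - \ol{n_\infty})\Bigl[\ln(n_\ast/n_0) + \ln(p_\ast/p_0)\Bigr] + \varepsilon(\ol{n_{tr}} - n_{tr,\infty})\ln(p_\ast/p_0),
\]
so the $\ol n - \ol{n_\infty}$ contribution collapses into a single log of $n_\ast p_\ast/(n_0 p_0)$.

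Next, I would invoke \eqref{eqequilibrium2}, namely $n_\ast p_\ast = n_0 p_0$, to kill the $\ol n - \ol{n_\infty}$ term entirely. It only remains to rewrite $\ln(p_\ast/p_0)$ in terms of $n_{tr,\infty}$. The second identity in \eqref{eqequiformulae} gives $p_\ast/p_0 = (1 - n_{tr,\infty})/n_{tr,\infty}$, so the right-hand side becomes exactly $\varepsilon(\ol{n_{tr}} - n_{tr,\infty})\ln\bigl((1-n_{tr,\infty})/n_{tr,\infty}\bigr)$, matching \eqref{eqnptontr}.

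There is no real obstacle here; the lemma is a purely algebraic identity, and the only step that requires foresight is recognising which of the two formulae in \eqref{eqequiformulae} to apply (one chooses the $p$-sided identity because, after elimination, it is $\ln(p_\ast/p_0)$ that survives; using the $n$-sided identity $n_{tr,\infty} = (n_\ast/n_0)(1-n_{tr,\infty})$ would lead to the same conclusion after eliminating $\ol n - \ol{n_\infty}$ instead of $\ol p - \ol{p_\infty}$, giving a convenient symmetry check).
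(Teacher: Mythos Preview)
Your proposal is correct and follows essentially the same approach as the paper: subtract the conservation law at equilibrium from the time-dependent one to eliminate $\ol p - \ol{p_\infty}$, use $n_\ast p_\ast = n_0 p_0$ to cancel the $(\ol n - \ol{n_\infty})$ term, and finish with the identity $p_\ast/p_0 = (1-n_{tr,\infty})/n_{tr,\infty}$ from \eqref{eqequiformulae}.
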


\begin{lemma}[Relative Entropy]
\label{lemmarelativeentropy}
The entropy relative to the equilibrium reads
\vspace{-1ex}
\begin{multline*}
E(n,p,n_{tr}) - E(n_\infty, p_\infty, n_{tr,\infty}) = \\
\int_{\Omega} \left( n \ln \frac{n}{n_\infty} - (n-n_\infty) + p \ln \frac{p}{p_\infty} - (p-p_\infty) + \varepsilon \int_{n_{tr,\infty}}^{n_{tr}(x)} \left( \ln \left( \frac{s}{1-s} \right) - \ln \left( \frac{n_{tr,\infty}}{1-n_{tr,\infty}} \right) \right) ds \right) dx.
\end{multline*}
\end{lemma}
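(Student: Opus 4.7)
The plan is a direct calculation, splitting $E - E_\infty$ into $n$-, $p$- and $n_{tr}$-contributions and then using Lemma \ref{lemmanptontr} to combine the residual linear terms.

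First I would exploit that $n_\infty = n_\ast \mu_n$, so $\ln(n_\infty/(n_0 \mu_n)) = \ln(n_\ast/n_0)$ is a \emph{constant}. This immediately gives the pointwise identity
\[
\Bigl[n \ln\tfrac{n}{n_0 \mu_n} - (n - n_0 \mu_n)\Bigr] - \Bigl[n_\infty \ln\tfrac{n_\infty}{n_0 \mu_n} - (n_\infty - n_0 \mu_n)\Bigr] = n \ln\tfrac{n}{n_\infty} - (n - n_\infty) + (n - n_\infty)\ln\tfrac{n_\ast}{n_0},
\]
and analogously for $p$. For the trap-state term I would simply use
\[
\varepsilon\int_{1/2}^{n_{tr}(x)} \ln\tfrac{s}{1-s}\,ds - \varepsilon\int_{1/2}^{n_{tr,\infty}} \ln\tfrac{s}{1-s}\,ds = \varepsilon\int_{n_{tr,\infty}}^{n_{tr}(x)} \ln\tfrac{s}{1-s}\,ds.
\]

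Next I would integrate the resulting expression over $\Omega$. Every term but two already matches the target; what remains to absorb are the constants
\[
(\ol n - \ol{n_\infty})\ln\tfrac{n_\ast}{n_0} + (\ol p - \ol{p_\infty})\ln\tfrac{p_\ast}{p_0}.
\]
By Lemma \ref{lemmanptontr} this quantity equals $\varepsilon(\ol{n_{tr}} - n_{tr,\infty})\ln\tfrac{1-n_{tr,\infty}}{n_{tr,\infty}}$, which can be rewritten as
\[
-\varepsilon\int_\Omega (n_{tr}(x) - n_{tr,\infty})\ln\tfrac{n_{tr,\infty}}{1 - n_{tr,\infty}}\,dx = -\varepsilon\int_\Omega \left(\int_{n_{tr,\infty}}^{n_{tr}(x)} \ln\tfrac{n_{tr,\infty}}{1 - n_{tr,\infty}}\,ds\right)dx,
\]
since $n_{tr}(x) - n_{tr,\infty} = \int_{n_{tr,\infty}}^{n_{tr}(x)} ds$. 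Adding this correction into the $\varepsilon\int_{n_{tr,\infty}}^{n_{tr}(x)} \ln(s/(1-s))\,ds$ term produces exactly the integrand $\ln(s/(1-s)) - \ln(n_{tr,\infty}/(1-n_{tr,\infty}))$ claimed in the lemma, and assembling everything yields the desired formula.

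There is no real obstacle here; the lemma is essentially bookkeeping. The only place where one needs a nontrivial input is the step that converts the linear $n$- and $p$-corrections into a linear $n_{tr}$-correction, which is exactly what Lemma \ref{lemmanptontr} provides via the conservation law $\ol n - \ol p + \varepsilon \ol{n_{tr}} = M$ together with $n_\ast p_\ast = n_0 p_0$. The constant $\ln(n_{tr,\infty}/(1-n_{tr,\infty}))$ appearing inside the target integrand is precisely the artifact of this substitution.
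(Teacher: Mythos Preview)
Your proposal is correct and follows essentially the same route as the paper: expand $n\ln\frac{n}{n_0\mu_n}$ using $n_\infty=n_\ast\mu_n$ to extract the constant $\ln\frac{n_\ast}{n_0}$, do the analogous splitting for $p$, collapse the $n_{tr}$-integrals by subtraction, and then invoke Lemma~\ref{lemmanptontr} to convert the residual linear $(\ol n-\ol{n_\infty})$- and $(\ol p-\ol{p_\infty})$-terms into the $n_{tr}$-correction that completes the integrand. The paper's argument and yours differ only in the order of presentation.
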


\begin{proof}
By the definition of $E(n,p,n_{tr})$ in \eqref{eqentropy}, we note that 
\vspace{-1ex}
\begin{multline*}
E(n,p,n_{tr}) - E(n_\infty, p_\infty, n_{tr,\infty}) = \int_\Omega \bigg( n \ln \left( \frac{n}{n_0 \mu_n} \right) - n_\infty \ln \left( \frac{n_\infty}{n_0 \mu_n} \right) - (n - n_\infty) \\ + p \ln \left( \frac{p}{p_0 \mu_p} \right) - p_\infty \ln \left( \frac{p_\infty}{p_0 \mu_p} \right) - (p - p_\infty) + \varepsilon \int_{n_{tr,\infty}}^{n_{tr}(x)} \ln \left( \frac{s}{1-s} \right) ds \bigg) dx.
\end{multline*}
We expand the first integrand as
$
n \ln \bigl( \frac{n}{n_0 \mu_n} \bigr) = n \ln \bigl( \frac{n}{n_\infty} \bigr) + n \ln \bigl( \frac{n_\infty}{n_0 \mu_n} \bigr).
$
Thus, with $n_\infty / \mu_n = n_\ast$, we get
\vspace{-1ex}
\begin{multline*}
\int_\Omega \bigg( n \ln \left( \frac{n}{n_0 \mu_n} \right) - n_\infty \ln \left( \frac{n_\infty}{n_0 \mu_n} \right) - (n - n_\infty) \bigg) dx \\= \int_\Omega \bigg( n \ln \left( \frac{n}{n_\infty} \right) - (n - n_\infty) \bigg) dx + (\ol n - \ol{n_\infty}) \ln \left( \frac{n_\ast}{n_0} \right).
\end{multline*}
Together with an analogous calculation of the $p$-terms, we obtain
\vspace{-1ex}
\begin{multline*}
E(n,p,n_{tr}) - E(n_\infty, p_\infty, n_{tr,\infty}) = \int_\Omega \bigg( n \ln \left( \frac{n}{n_\infty} \right) - (n - n_\infty) + p \ln \left( \frac{p}{p_\infty} \right) - (p - p_\infty) \bigg) dx \\
+ (\ol n - \ol{n_\infty}) \ln \left( \frac{n_\ast}{n_0} \right) + (\ol p - \ol{p_\infty}) \ln \left( \frac{p_\ast}{p_0} \right) + \varepsilon \int_\Omega \int_{n_{tr,\infty}}^{n_{tr}(x)} \ln \left( \frac{s}{1-s} \right) ds \, dx.
\end{multline*}
Lemma \ref{lemmanptontr} allows us to reformulate the second line as
\vspace{-1ex}
\begin{multline*}
(\ol n - \ol{n_\infty}) \ln \left( \frac{n_\ast}{n_0} \right) + (\ol p - \ol{p_\infty}) \ln \left( \frac{p_\ast}{p_0} \right) + \varepsilon \int_\Omega \int_{n_{tr,\infty}}^{n_{tr}(x)} \ln \left( \frac{s}{1-s} \right) ds \, dx \\
= \varepsilon (\ol{n_{tr}} - n_{tr,\infty}) \ln \left( \frac{1-n_{tr,\infty}}{n_{tr,\infty}} \right) + \varepsilon \int_\Omega \int_{n_{tr,\infty}}^{n_{tr}(x)} \ln \left( \frac{s}{1-s} \right) ds \, dx \\
= \varepsilon \int_\Omega \int_{n_{tr,\infty}}^{n_{tr}(x)} \left( \ln \left( \frac{s}{1-s} \right) - \ln \left( \frac{n_{tr,\infty}}{1 - n_{tr,\infty}} \right) \right) ds \, dx,
\end{multline*}
which proves the claim.
\end{proof}

\begin{lemma}[Csisz\'ar--Kullback--Pinsker inequality]
\label{lemmackp}
Let $f, g: \Omega \rightarrow \mathbb{R}$ be non-negative measureable functions. Then, 
\[
\int_\Omega \left( f \ln \Big( \frac{f}{g} \Big) - (f - g) \right) dx \geq \frac{3}{2 \ol f + 4 \ol g} \| f - g \|_{L^1(\Omega)}^2.
\]
\end{lemma}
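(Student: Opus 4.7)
The plan is to reduce the claim to a pointwise functional inequality and then couple it with a weighted Cauchy--Schwarz bound. The pointwise ingredient I would establish is
\begin{equation*}
a \ln \tfrac{a}{b} - (a-b) \;\geq\; \frac{3\,(a-b)^2}{2\,(a + 2b)}, \qquad a,b > 0,
\end{equation*}
with the convention that the left-hand side is $+\infty$ if $b=0<a$ and vanishes for $a=b=0$, so that the bound extends to all non-negative pairs. Substituting $t = a/b$ reduces the inequality to showing $\psi(t) \coleq t\ln t - (t-1) - \tfrac{3(t-1)^2}{2(t+2)} \geq 0$ on $(0,\infty)$. A direct computation gives $\psi(1) = \psi'(1) = 0$ and
\begin{equation*}
\psi''(t) \;=\; \frac{1}{t} \;-\; \frac{27}{(t+2)^3},
\end{equation*}
which is non-negative because AM--GM applied to $t, 1, 1$ yields $\bigl(\tfrac{t+1+1}{3}\bigr)^3 \geq t$, i.e.\ $(t+2)^3 \geq 27\,t$. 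Hence $\psi$ is convex with a minimum of $0$ at $t=1$, and the pointwise inequality follows.

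In the second step I would integrate the pointwise bound and apply Cauchy--Schwarz with the weight $w \coleq \tfrac{2}{3}(f+2g)$:
\begin{equation*}
\| f - g \|_{L^1(\Omega)}^2 \;=\; \Bigl( \int_\Omega \tfrac{|f-g|}{\sqrt{w}} \cdot \sqrt{w} \, dx \Bigr)^2 \;\leq\; \int_\Omega \frac{(f-g)^2}{w} \, dx \,\cdot\, \int_\Omega w \, dx.
\end{equation*}
The second factor equals $\tfrac{2\ol{f} + 4\ol{g}}{3}$, while the first factor is estimated via the pointwise inequality by
\begin{equation*}
\int_\Omega \frac{(f-g)^2}{w}\, dx \;=\; \int_\Omega \frac{3(f-g)^2}{2(f+2g)}\, dx \;\leq\; \int_\Omega \Bigl( f \ln \tfrac{f}{g} - (f-g) \Bigr) \, dx.
\end{equation*}
Rearranging the two estimates yields exactly the claimed inequality with constant $\tfrac{3}{2\ol{f} + 4\ol{g}}$.

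The main obstacle is matching the constant in the pointwise bound to the weight used in Cauchy--Schwarz so that the product of the two steps gives the sharp prefactor in the statement. This is what makes the specific choice $\tfrac{3(a-b)^2}{2(a+2b)}$ (rather than any cruder quadratic lower bound on the Boltzmann-entropy density) essential, and it is precisely this that the AM--GM identity $(t+2)^3 \geq 27\, t$ delivers in a compact form.
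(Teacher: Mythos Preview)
Your proof is correct and follows essentially the same approach as the paper: both reduce to the pointwise inequality $3(x-1)^2 \leq (2x+4)(x\ln x - (x-1))$ (equivalently your bound $a\ln(a/b)-(a-b)\geq \tfrac{3(a-b)^2}{2(a+2b)}$) and then apply Cauchy--Schwarz with the weight $\tfrac{2}{3}f+\tfrac{4}{3}g$. The paper simply states this pointwise inequality as ``elementary'', whereas you supply a clean proof via $\psi''(t)=\tfrac{1}{t}-\tfrac{27}{(t+2)^3}\geq 0$ together with the AM--GM identity $(t+2)^3\geq 27t$; this is a nice addition and your second-derivative computation is correct.
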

\begin{proof}
Following a proof by Pinsker, we start with the elementary inequality $3 (x-1)^2 \leq (2x + 4) (x \ln x - (x-1))$. This allows us to derive the following Csisz\'ar--Kullback--Pinsker-type inequality:
\begin{align*}
\| f - g \|_{L^1(\Omega)} &= \int_\Omega g \left|\frac{f}{g} - 1\right| \, dx \leq \int_\Omega g \sqrt{\frac{2}{3} \frac{f}{g} + \frac{4}{3}}  \, \sqrt{\frac{f}{g} \ln \Big( \frac{f}{g} \Big) - \Big(\frac{f}{g} - 1 \Big)} \, dx \\
&= \int_\Omega \sqrt{\frac{2}{3} f + \frac{4}{3} g}  \, \sqrt{f \ln \Big( \frac{f}{g} \Big) - (f - g)} \, dx \leq \sqrt{\frac{2}{3} \ol f + \frac{4}{3} \ol g} \, \sqrt{\int_\Omega \left( f \ln \Big( \frac{f}{g} \Big) - (f - g) \right) dx}
\end{align*}
where we applied H\"older's inequality in the last step.
\end{proof}

The subsequent Lemma provides $L^1$-bounds for $n$ and $p$ in terms of the initial entropy of the system and some further constants.

\begin{lemma}[$L^1$-bounds]
\label{lemmal1bounds}
Due to the monotonicity of the entropy functional, any entropy producing solution of \eqref{eqsystem} satisfies
\[
\forall \, t \geq 0: \quad \ol n, \, \ol p \leq \frac{5}{2} \max \{ n_0 \ol{\mu_n}, p_0 \ol{\mu_p} \} + \frac{3}{4} E(n(0), p(0), n_{tr}(0)) =: M_1. 
\]

\begin{proof}
Employing Lemma \ref{lemmackp} and Young's inequality, we find
\begin{align*}
\ol n &\leq n_0 \ol{\mu_n} + \| n - n_0 \mu_n \|_{L^1(\Omega)} \leq n_0 \ol{\mu_n} + \sqrt{\frac{2}{3} \ol n + \frac{4}{3} n_0 \ol{\mu_n}} \, \sqrt{\int_\Omega \left( n \ln \Big( \frac{n}{n_0 \mu_n} \Big) - (n - n_0 \mu_n) \right) dx} \\
&\leq n_0 \ol{\mu_n} + \frac{1}{3} \ol n + \frac{2}{3} n_0 \ol{\mu_n} + \frac{1}{2} \int_\Omega \left( n \ln \Big( \frac{n}{n_0 \mu_n} \Big) - (n - n_0 \mu_n) \right) dx.
\end{align*}
Solving this inequality for $\ol n$ yields 
\[
\ol n \leq \frac{5}{2} n_0 \ol{\mu_n} + \frac{3}{4} \int_\Omega \left( n \ln \left( \frac{n}{n_0 \mu_n} \right) - (n - n_0 \mu_n) \right) dx.
\]
Therefore, we arrive at
\[
\ol n \leq \frac{5}{2} n_0 \ol{\mu_n} + \frac{3}{4} E(n, p, n_{tr}) \leq \frac{5}{2} \max\{n_0 \ol{\mu_n}, p_0 \ol{\mu_p}\} + \frac{3}{4} E(n(0), p(0), n_{tr}(0))
\]
where we used the monotonicity of the entropy functional in the last step. In the same way, we may bound $\ol p$ from above.
\end{proof}
\end{lemma}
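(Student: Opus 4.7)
The plan is to estimate $\overline n$ (and by symmetry $\overline p$) via a Csisz\'ar--Kullback--Pinsker-type argument comparing $n$ against the reference density $n_0 \mu_n$, which is precisely the Boltzmann weight appearing inside $E$, and then to exploit monotonicity of $E$ in time. The virtue of picking $g = n_0\mu_n$ in Lemma \ref{lemmackp} (rather than, say, $g = n_\infty$) is that the resulting ``relative entropy'' is literally one of the three non-negative summands of the entropy functional \eqref{eqentropy}, so no further algebra involving the equilibrium constants $n_\ast,p_\ast$ is needed.

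First I would split $\overline n \le n_0 \overline{\mu_n} + \| n - n_0 \mu_n \|_{L^1(\Omega)}$ by the triangle inequality, and apply Lemma \ref{lemmackp} with $f=n$, $g=n_0\mu_n$ to obtain
\[
\| n - n_0 \mu_n \|_{L^1(\Omega)} \le \sqrt{\tfrac{2}{3} \overline n + \tfrac{4}{3} n_0 \overline{\mu_n}}\,\sqrt{\mathcal{B}_n},
\qquad
\mathcal{B}_n \coleq \int_\Omega \left( n \ln \tfrac{n}{n_0\mu_n} - (n-n_0\mu_n)\right) dx.
\]
An application of Young's inequality $\sqrt{ab}\le\tfrac{a}{2}+\tfrac{b}{2}$ to the two square roots then produces a $\tfrac{1}{3}\overline n$ contribution on the right, which I absorb into the left, leaving $\overline n \le \tfrac{5}{2} n_0 \overline{\mu_n} + \tfrac{3}{4} \mathcal{B}_n$.

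Finally, since the remaining two pieces of $E$, namely the analogous $p$-Boltzmann term and the $\varepsilon$-weighted integral $\int_{1/2}^{n_{tr}}\ln(s/(1-s))\,ds$, are both pointwise non-negative, we have $\mathcal{B}_n \le E(n,p,n_{tr})(t)$; combining this with the monotonicity $E(t) \le E(0)$ (which is immediate from the entropy-entropy production identity \eqref{weakEntropy} together with $D\ge 0$) yields the claimed bound on $\overline n$. The same chain of inequalities applied to $\overline p$ produces the $p$-bound, and taking the larger of the two reference masses gives the constant $M_1$. No step is really hard; the only bookkeeping issue is choosing the Young split so that the coefficient of $\overline n$ on the right stays strictly below $1$, which the symmetric split $\tfrac12+\tfrac12$ achieves (leaving $\tfrac13\overline n$ after transporting the $n_0\overline{\mu_n}$ term across). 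The argument is robust in $\varepsilon$ because the $n_{tr}$-piece of $E$ is merely dropped as a non-negative term and never needs to be quantified.
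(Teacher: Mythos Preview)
Your proposal is correct and follows essentially the same approach as the paper's proof: triangle inequality, Lemma~\ref{lemmackp} with $f=n$ and $g=n_0\mu_n$, the symmetric Young split $\sqrt{ab}\le\tfrac{a}{2}+\tfrac{b}{2}$, absorption of the $\tfrac13\overline n$ term, and then bounding $\mathcal{B}_n\le E(t)\le E(0)$ via non-negativity of the remaining entropy pieces and monotonicity. The only cosmetic difference is that you make explicit the reason $\mathcal{B}_n\le E$ (non-negativity of the $p$- and $n_{tr}$-contributions), which the paper leaves implicit.
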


At certain points, we will have to estimate the difference between terms like $\ol{n/n_\infty}$ and $\ol n / \ol{n_{\infty}}$. Using  Lemma \ref{lemmaflux} below, we can bound this difference by the $J_n$-flux-term and, hence, by the entropy production.

\begin{lemma}
\label{lemmaflux}
Let $f \in L^1(\Omega)$ and $g \in L^\infty(\Omega)$ such that $f \geq 0$, $g \geq \gamma > 0$ a.e. on $\Omega$ and $f/g$ is weakly differentiable. Then, there exists an explicit constant $C(\|f\|_{L^1(\Omega)}, \|g\|_{L^\infty(\Omega)}, \gamma) > 0$ such that
\[
\bigg( \frac{\ol f}{\ol{g}} - \ol{\left( \frac{f}{g} \right)} \bigg)^2 \leq C \int_\Omega \left| \nabla \sqrt{ \frac{f}{g} } \, \right|^2 dx.
\]
\begin{proof}
We define
$\delta := \frac{f}{g} - \ol{\left( \frac{f}{g} \right)}$
and obtain
$
f = g \left( \ol{\left( \frac{f}{g} \right)} + \delta \right)
$
and
\[
\frac{\ol f}{\ol{g}} = \int_\Omega \frac{f}{\ol{g}} \, dx = \int_\Omega \frac{g}{\ol{g}} \left( \ol{\left( \frac{f}{g} \right)} + \delta \right) dx = \ol{\left( \frac{f}{g} \right)} + \int_\Omega \frac{g}{\ol{g}} \, \delta \, dx.
\]
Therefore,
\[
\left| \frac{\ol f}{\ol{g}} - \ol{\left( \frac{f}{g} \right)} \right| \leq \frac{\|g\|_{L^\infty(\Omega)}}{\ol{g}} \|\delta\|_{L^1(\Omega)} \leq C_P \frac{\|g\|_{L^\infty(\Omega)}}{\ol{g}} \left\| \nabla \left( \frac{f}{g} \right) \right\|_{L^1(\Omega)}
\]
by applying Poincar\'e's inequality to $\delta$ with $\ol \delta = 0$ and some constant $C_P(\Omega) > 0$. As $g \geq \gamma > 0$ is uniformly positive on $\Omega$ and $\ol g \geq \gamma$, we arrive at
\[
\left| \frac{\ol f}{\ol{g}} - \ol{\left( \frac{f}{g} \right)} \right| \leq C_P \frac{\|g\|_{L^\infty(\Omega)}}{\gamma^2} \left\| g \, \nabla \left( \frac{f}{g} \right) \right\|_{L^1(\Omega)}.
\]
Finally, we deduce
\[
\left( \frac{\ol f}{\ol{g}} - \ol{\left( \frac{f}{g} \right)} \right)^2 \leq \left(C_P \frac{\|g\|_{L^\infty(\Omega)}}{\gamma^2} \right)^2 \left\| \sqrt{fg} \sqrt{\frac{g}{f}} \nabla \left( \frac{f}{g} \right) \right\|_{L^1(\Omega)}^2 \leq 4 \ol{fg} \left(C_P \frac{\|g\|_{L^\infty(\Omega)}}{\gamma^2} \right)^2 \int_\Omega \left| \nabla \sqrt{ \frac{f}{g} } \, \right|^2 dx
\]
employing H\"older's inequality in the second step.
\end{proof}
\end{lemma}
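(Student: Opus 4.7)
The plan is to view the quantity $\ol{f}/\ol{g} - \ol{(f/g)}$ as a Poincaré-type defect that measures how much the ratio $f/g$ deviates from its spatial mean (it vanishes trivially when $g$ is constant), and then to convert the resulting $L^1$-norm of $\nabla(f/g)$ into the Fisher-information-type integral $\int |\nabla\sqrt{f/g}|^2\,dx$ on the right-hand side via the chain rule and Cauchy--Schwarz.

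First I would set $h \coleq f/g$ and split $h = \ol{h} + \delta$ with $\ol{\delta}=0$. Since $f = gh$, this decomposition yields $\ol{f} = \ol{g}\,\ol{h} + \ol{g\delta}$ and hence the key identity
\[
\frac{\ol{f}}{\ol{g}} - \ol{\left(\frac{f}{g}\right)} \;=\; \frac{\ol{g\delta}}{\ol{g}}.
\]
An $L^{\infty}$--$L^{1}$ pairing together with the uniform lower bound $\ol{g}\ge\gamma$ gives $|\ol{f}/\ol{g} - \ol{(f/g)}| \le (\|g\|_{L^\infty}/\gamma)\,\|\delta\|_{L^1(\Omega)}$, and Poincaré's inequality applied to the zero-mean function $\delta$ produces $\|\delta\|_{L^1(\Omega)} \le C_P\,\|\nabla(f/g)\|_{L^1(\Omega)}$ with a constant depending only on $\Omega$.

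The last step is to turn $\|\nabla(f/g)\|_{L^1(\Omega)}$ into the target quantity. Using the chain rule $\nabla(f/g) = 2\sqrt{f/g}\,\nabla\sqrt{f/g}$ and Cauchy--Schwarz,
\[
\|\nabla(f/g)\|_{L^1(\Omega)} \;\le\; 2 \sqrt{\ol{(f/g)}}\,\Bigl(\int_\Omega |\nabla\sqrt{f/g}|^2\,dx\Bigr)^{1/2},
\]
and the bound $\ol{(f/g)} \le \|f\|_{L^1(\Omega)}/\gamma$ controls the prefactor in terms of the data. Squaring the whole chain of estimates delivers the claim with an explicit constant $C(\|f\|_{L^1(\Omega)},\|g\|_{L^\infty(\Omega)},\gamma)$.

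The only genuinely technical point is justifying the chain rule for $\sqrt{f/g}$, since $f/g$ may vanish and the naive formula $\nabla\sqrt{f/g} = \nabla(f/g)/(2\sqrt{f/g})$ is singular there. I would address this by the standard truncation argument for Sobolev composition: apply the chain rule to $\sqrt{f/g + \delta'}$ for $\delta'>0$ and pass to the limit $\delta'\to 0$, using that $x\mapsto \sqrt{x}$ is a Lipschitz composition on $[\delta',\infty)$ and that $\nabla\sqrt{f/g}$ is defined a.e. as the $L^2$-limit. Apart from this routine verification, the proof is a straightforward chain of Poincaré, Hölder, and Cauchy--Schwarz, with no conceptual obstacle.
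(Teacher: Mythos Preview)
Your proof is correct and follows essentially the same approach as the paper: write the difference as $\ol{g\delta}/\ol g$ with $\delta=f/g-\ol{(f/g)}$, apply Poincar\'e's inequality in $L^1$, and then convert $\|\nabla(f/g)\|_{L^1}$ to $\|\nabla\sqrt{f/g}\|_{L^2}$ via the chain rule and Cauchy--Schwarz. The only cosmetic difference is in the final step, where the paper first inserts an extra factor of $g$ (using $g\ge\gamma$) before splitting $g\nabla(f/g)=2\sqrt{fg}\,\nabla\sqrt{f/g}$ and applying H\"older, obtaining the constant $4\,\ol{fg}\,(C_P\|g\|_{L^\infty}/\gamma^2)^2$; your direct splitting without the extra $g$ is slightly cleaner and yields a constant involving $\ol{(f/g)}\le \|f\|_{L^1}/\gamma$ instead, which is equally explicit.
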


\section{Two preliminary propositions}
\label{sectionprop}
\begin{notation}
For arbitrary functions $f$, we define the normalised quantity
\[
\wt f := \frac{\, f \,}{\ol f}.
\]
\end{notation}

The following Logarithmic Sobolev inequality on bounded domains was proven in \cite{DF14} by following an argument of Stroock \cite{Str93}.

\begin{lemma}[Logarithmic Sobolev inequality on bounded domains]
\label{Lemma:LogSob}
Let $\Omega$ be a bounded domain in $\mathbb{R}^{m}$ such that the Poincar\'e (-Wirtinger) and Sobolev inequalities
\begin{eqnarray}
& \|\phi - \int_{\Omega} \phi \, dx\|_{L^2(\Omega)}^2 \le P(\Omega) \, \|\nabla \phi\|_{L^2(\Omega)}^2\,, \label{Poincare}\\
&\|\phi\|_{L^q(\Omega)}^2 \le C_1(\Omega) \,
\|\nabla \phi\|_{L^2(\Omega)}^2 + C_2(\Omega)\,
 \|\phi\|_{L^2(\Omega)}^2\,, 
\qquad \frac{1}{q} = \frac{1}{2}-\frac{1}{m}\,,\label{Sobolev}
\end{eqnarray}
hold. Then, the logarithmic Sobolev inequality 
\begin{equation}\label{LogSob}
\int_{\Omega} \phi^2 \ln\left(\frac{\phi^2}{\|\phi\|_2^2}\right) dx \le L(\Omega,m)
\, \|\nabla \phi\|_{L^2(\Omega)}^2
\end{equation}
holds (for some constant $L(\Omega,m)>0$).
\end{lemma}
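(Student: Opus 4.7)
The plan is to follow Stroock's classical route: combine the hypotheses \eqref{Poincare}--\eqref{Sobolev} with a Rothaus-type reduction to mean-zero functions and a Jensen-based moment bound. Since the integrand in \eqref{LogSob} depends only on $|\phi|$ and $|\nabla|\phi||\le|\nabla\phi|$ a.e., one may assume $\phi\ge 0$ without loss of generality. Setting $u \coleq \phi - \int_\Omega \phi\,dx$, so $\bar u=0$ and $\nabla u=\nabla\phi$, Rothaus's elementary inequality gives
\[
\int_\Omega \phi^2 \ln\!\frac{\phi^2}{\|\phi\|_2^2}\,dx \;\le\; \int_\Omega u^2 \ln\!\frac{u^2}{\|u\|_2^2}\,dx + 2\,\|u\|_2^2,
\]
and the last term is dominated by $2P\,\|\nabla\phi\|_2^2$ via \eqref{Poincare}. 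It therefore suffices to establish \eqref{LogSob} for mean-zero functions $u$.

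For such $u$, Jensen's inequality (concavity of $\ln$) applied to the probability density $h = u^2/\|u\|_2^2$ with exponent $q/2>1$ yields
\[
\int_\Omega u^2 \ln\!\frac{u^2}{\|u\|_2^2}\,dx \;\le\; \frac{q}{q-2}\,\|u\|_2^2\,\ln\!\frac{\|u\|_q^2}{\|u\|_2^2}.
\]
The Sobolev inequality \eqref{Sobolev} together with Poincar\'e applied to the mean-zero $u$ produces $\|u\|_q^2 \le C\|\nabla u\|_2^2$ with $C \coleq C_1+C_2P$. Splitting the logarithm as $\ln C + \ln(\|\nabla u\|_2^2/\|u\|_2^2)$, the first summand contributes at most $P\,|\ln C|\,\|\nabla u\|_2^2$ (again by Poincar\'e applied to $\|u\|_2^2$), while for the second I invoke the universal elementary bound $-x\ln x\le 1/e$ (valid for all $x>0$), applied with $x=\|u\|_2^2/\|\nabla u\|_2^2$, to conclude $\|u\|_2^2\ln(\|\nabla u\|_2^2/\|u\|_2^2) \le \|\nabla u\|_2^2/e$. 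Adding the Rothaus contribution $2P\,\|\nabla\phi\|_2^2$ then yields \eqref{LogSob} with an explicit constant $L(\Omega,m)$.

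The main subtlety lies in absorbing the logarithm: a priori, $\ln(\|\nabla u\|_2^2/\|u\|_2^2)$ can be arbitrarily large as $\|u\|_2\to 0$, yet the prefactor $\|u\|_2^2$ converts the product into the globally bounded quantity $-x\ln x$, and only at this point does the estimate close. This is precisely where the interplay between the Poincar\'e and Sobolev hypotheses plays its essential role.
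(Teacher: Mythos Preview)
Your argument is correct and is precisely the Stroock--Rothaus route that the paper itself points to: the lemma is not proved in the paper but is quoted from \cite{DF14}, which follows Stroock \cite{Str93}. Your reduction via Rothaus to mean-zero functions, the Jensen moment bound $\int u^2\ln(u^2/\|u\|_2^2)\le \tfrac{q}{q-2}\|u\|_2^2\ln(\|u\|_q^2/\|u\|_2^2)$, and the closure using Sobolev combined with Poincar\'e is exactly that argument, so there is nothing to compare beyond noting that you have supplied the details the paper omits. One small remark: the estimate implicitly uses $|\Omega|=1$ (for Rothaus and for $\|u\|_2\le\|u\|_q$), which is the paper's standing normalisation, and the Sobolev exponent $1/q=1/2-1/m$ tacitly assumes $m\ge 3$; both are consistent with the paper's setting.
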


The Log-Sobolev inequality allows to bound an appropriate part of the entropy functional by the flux-parts of the entropy production. The normalised variables on the left hand side of the subsequent inequality naturally arise when reformulating the flux-terms on the right hand side in such a way that we can apply the Log-Sobolev inequality on $\Omega$.

\begin{proposition}
\label{proplogsob}
Recall the assumptions $\|V_n\|_{L^\infty(\Omega)}, \|V_p\|_{L^\infty(\Omega)}\le V$. Then, there exists a constant $C(V)>0$ such that
\[
\int_{\Omega} \left( n \ln \bigg( \frac{\wt n}{\wt{\mu_n}} \bigg) + p \ln \bigg( \frac{\wt p}{\wt{\mu_p}} \bigg) \right) dx \leq C \int_{\Omega} \left( \frac{|J_n|^2}{n} + \frac{|J_p|^2}{p} \right) dx.
\]
\begin{proof}
From the definition of $J_n$ one obtains 
\[
\int_{\Omega} \frac{|J_n|^2}{n} \, dx 
= \int_{\Omega} \frac{\mu_n}{n} \left|\nabla \left(\frac{n}{\mu_n}\right)\right|^2 \mu_n \, dx 
= 4 \, \ol{n} \int_{\Omega} \frac{\mu_n}{\ol n} \left|\nabla \sqrt{\frac{n}{\mu_n}}\right|^2 \, dx 
= 4\, \ol{n} \int_{\Omega} \frac{\mu_n}{\ol{\mu_n}} \left|\nabla \sqrt{\frac{\wt n}{\wt{\mu_n}}}\right|^2  \, dx. 
\]
We set 
\[
\phi(x) := \sqrt{\frac{\wt n}{\wt{\mu_n}}}, \quad \alpha \coleq \int_{\Omega} \phi(x)^2 \, dx
\]
and observe due to the mean-value theorem that $\alpha=\frac{\ol{\mu_n}}{\ol{n}} 
\int_{\Omega} \frac{n}{\mu_n}\,dx = \frac{\ol{\mu_n}}{\mu_n(\theta)}\le \ol{\mu_n} e^V$ is bounded 
independently of $n$.
Next, we
introduce the rescaled variable $y \coleq \alpha^{-\frac1m} x$ where $m$ denotes the space dimension. Note that $\| \phi \|_{L^2(dx)}$ is in general different from one, whereas $\| \phi \|_{L^2(dy)} = 1$. We now estimate with $\|V_n\|_{L^\infty(\Omega)}\le V$ and the Logarithmic Sobolev Inequality \eqref{LogSob}
\begin{multline*}
\int_{\Omega} | \nabla_{\!x} \phi |^2 \, dx = \int_{\Omega} | \alpha^{-\frac1m} \nabla_{\!y} \phi |^2 \, \alpha \, dy = \alpha^{1 - \frac{2}{m}} \int_{\Omega} | \nabla_{\!y} \phi |^2 \, dy \\
\geq \alpha^{1 - \frac{2}{m}} \frac1L \int_{\Omega} \phi^2 \ln ( \phi^2) \, dy =\alpha^{1 - \frac{2}{m}} \frac1L \int_{\Omega} \frac{\wt n}{\wt{\mu_n}} \ln \Big( \frac{\wt n}{\wt{\mu_n}} \Big) \, dy = \alpha^{-\frac{2}{m}} \frac1L \frac{\ol{\mu_n}}{\ol n} \int_{\Omega} \frac{n}{\mu_n} \ln \Big( \frac{\wt n}{\wt{\mu_n}} \Big) \, dx.
\end{multline*}
The corresponding estimate involving $J_n$ reads
\begin{align*}
\int_{\Omega} \frac{|J_n|^2}{n} \, dx 
&\ge 4\, \frac{\ol{n}}{\ol{\mu_n}} e^{-V} \int_{\Omega}  \left|\nabla_{\!x} \phi \right|^2  \, dx
\ge \frac{4}{L} \alpha^{-\frac{2}{m}} e^{-2V} \int_{\Omega} n \ln \Big( \frac{\wt n}{\wt{\mu_n}} \Big) \, dx.
\end{align*}
The same arguments apply to the terms involving $p$.
\end{proof}
\end{proposition}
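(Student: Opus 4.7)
The natural approach is to apply the Logarithmic Sobolev inequality of Lemma \ref{Lemma:LogSob} to a test function $\phi$ whose Dirichlet energy matches the flux integrand on the right of the claim and whose entropy $\phi^2\ln\phi^2$ matches the integrand on the left. Inspired by the self-adjoint form $J_n = \mu_n\nabla(n/\mu_n)$ and by the normalised ratio $\wt n/\wt{\mu_n}$ appearing in the claim, I would choose $\phi := \sqrt{\wt n/\wt{\mu_n}}$.

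First I would expand the flux, using the chain rule
$$\frac{|J_n|^2}{n} = 4\mu_n\bigl|\nabla\sqrt{n/\mu_n}\bigr|^2 = 4\,\frac{\ol n}{\ol{\mu_n}}\,\mu_n\,|\nabla\phi|^2,$$
and use the bound $\mu_n \ge e^{-V}$ to lower-bound the flux integral by $4(\ol n/\ol{\mu_n})e^{-V}\int_\Omega|\nabla\phi|^2\,dx$.

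Next I would apply Lemma \ref{Lemma:LogSob}. The technical snag is that \eqref{LogSob} involves the ratio $\phi^2/\|\phi\|_2^2$ rather than simply $\phi^2$, and $\phi$ does not have unit $L^2$-norm on $\Omega$. To isolate the clean integrand $\phi^2\ln\phi^2$, I would pass to the rescaled variable $y := \alpha^{-1/m}x$ with $\alpha := \|\phi\|_{L^2(\Omega)}^2$, so that $\phi$ has unit $L^2(dy)$-norm on the rescaled domain. A short mean-value calculation gives $\alpha = \ol{\mu_n}/\mu_n(\theta) \le \ol{\mu_n}\,e^V \le e^{2V}$, so $\alpha$ is controlled uniformly in $n$ via the $L^\infty$-bound on the potentials. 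Applying \eqref{LogSob} in the $y$-variable (where it collapses to $\int\phi^2\ln\phi^2\,dy \le L\int|\nabla_y\phi|^2\,dy$) and tracking the Jacobian factors $\alpha^{1-2/m}$ and $\alpha^{-1}$ yields
$$\int_\Omega|\nabla_{\!x}\phi|^2\,dx \ge \frac{1}{L}\,\alpha^{-2/m}\,\frac{\ol{\mu_n}}{\ol n}\int_\Omega\frac{n}{\mu_n}\,\ln(\wt n/\wt{\mu_n})\,dx.$$

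Chaining the two estimates, absorbing a further factor $e^{-V}$ coming from $1/\mu_n \ge e^{-V}$ in order to replace $n/\mu_n$ by $n$, and using $\alpha^{-2/m} \ge e^{-4V/m}$ yields the desired inequality for the $n$-terms with an explicit constant $C(V)>0$. The $p$-terms are treated identically, and summation finishes the proof. The principal obstacle is the $n$-dependent normalisation of $\phi$, which forces the rescaling; this step crucially uses the $L^\infty$-assumption on the potentials to guarantee the uniform control $\alpha \le e^{2V}$.
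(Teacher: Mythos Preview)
Your proposal is correct and follows essentially the same approach as the paper: the identical choice $\phi=\sqrt{\wt n/\wt{\mu_n}}$, the same flux expansion, the same rescaling $y=\alpha^{-1/m}x$ to normalise $\|\phi\|_{L^2}$ before applying the Log-Sobolev inequality, and the same chaining via $e^{-V}\le\mu_n\le e^V$. The only cosmetic addition is that you make the bound $\alpha^{-2/m}\ge e^{-4V/m}$ explicit, whereas the paper leaves this implicit in the final constant.
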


The following Proposition contains the first step towards an entropy-entropy production inequality. The relative entropy can be controlled by the flux-part of the entropy production and three additional terms, which mainly consist of square-roots of averaged quantities. The proof that the entropy production also serves as an upper bound for these terms will be the subject of the next section. 

\begin{proposition}
\label{propeedineq}
There exists an explicit constant $C(\gamma, \Gamma, M_1) > 0$ 
such that for $(n_\infty, p_\infty, n_{tr,\infty}) \in X$ from Theorem \ref{theoremequilibrium} and all non-negative functions $(n,p,n_{tr}) \in L^1(\Omega)^3$ satisfying $n_{tr} \leq 1$, the conservation law 
\[
\ol n - \ol p + \varepsilon \ol{n_{tr}} = M
\]
and the $L^1$-bound 
\[
\ol n, \ol p \leq M_1,
\]
the following estimate holds true:
\vspace{-1ex}
\begin{multline}
\label{eqpropeedineq}
E(n,p,n_{tr}) - E(n_\infty, p_\infty, n_{tr,\infty}) \leq C \, \Bigg( \int_\Omega \left( \frac{|J_n|^2}{n} + \frac{|J_p|^2}{p} \right) dx \\
+ \bigg( \sqrt{\ol{\Big(\frac{n}{\mu_n}\Big)}} - \sqrt{n_\ast} \bigg)^2 + \bigg( \sqrt{\ol{\Big(\frac{p}{\mu_p}\Big)}} - \sqrt{p_\ast} \bigg)^2 + \varepsilon \int_\Omega \big( \sqrt{n_{tr}} - \sqrt{n_{tr,\infty}} \big)^2 \, dx \Bigg).
\end{multline}
(Note that the right hand side of \eqref{eqpropeedineq} vanishes at the equilibrium $(n_\infty, p_\infty, n_{tr,\infty})$.)

\begin{proof}
According to Lemma \ref{lemmarelativeentropy}, we have
\vspace{-1ex}
\begin{multline*}
E(n,p,n_{tr}) - E(n_\infty, p_\infty, n_{tr,\infty}) = \\
\int_{\Omega} \left( n \ln \frac{n}{n_\infty} - (n-n_\infty) + p \ln \frac{p}{p_\infty} - (p-p_\infty) + \varepsilon \int_{n_{tr,\infty}}^{n_{tr}} \left( \ln \Big( \frac{s}{1-s} \Big) - \ln \Big( \frac{n_{tr,\infty}}{1-n_{tr,\infty}} \Big) \right) ds \right) dx.
\end{multline*}
Recall that $n = \wt n\,\ol n$, $n_\infty = \wt{n_\infty}\,\ol{n_\infty}$ and $\wt{n_\infty} = \wt{\mu_n}$. Using these relations, we rewrite the first two integrands as
\[
n \ln \Big( \frac{n}{n_\infty} \Big) - (n - n_\infty) = n \ln \Big( \frac{\wt n}{\wt{\mu_n}} \Big) + n \ln \Big( \frac{\ol n}{\ol{n_\infty}} \Big) - (n - n_\infty)
\]
and analogously for the $p$-terms. This results in
\vspace{-1ex}
\begin{multline}
\label{eqentropyparts}
E(n,p,n_{tr}) - E(n_\infty, p_\infty, n_{tr,\infty}) = \int_\Omega \bigg( n \ln \Big( \frac{\wt n}{\wt{\mu_n}} \Big) + p \ln \Big( \frac{\wt p}{\wt{\mu_p}} \Big) \bigg) dx \\
+ \, \ol{n_\infty} \left( \frac{\ol n}{\ol{n_\infty}} \ln \Big( \frac{\ol n}{\ol{n_\infty}} \Big) - \Big( \frac{\ol n}{\ol{n_\infty}} - 1 \Big) \right) + \ol{p_\infty} \left( \frac{\ol p}{\ol{p_\infty}} \ln \Big( \frac{\ol p}{\ol{p_\infty}} \Big) - \Big( \frac{\ol p}{\ol{p_\infty}} - 1 \Big) \right) \\
+ \varepsilon \int_\Omega \int_{n_{tr,\infty}}^{n_{tr}} \left( \ln \Big( \frac{s}{1-s} \Big) - \ln \Big( \frac{n_{tr,\infty}}{1-n_{tr,\infty}} \Big) \right) ds \, dx.
\end{multline}

The terms in the first line of \eqref{eqentropyparts} can be estimated using the Log-Sobolev inequality of Proposition \ref{proplogsob}. Moreover, the elementary inequality $x \ln x - (x-1) \leq (x-1)^2$ for $x>0$ gives rise to
\[
\ol{n_\infty} \left( \frac{\ol n}{\ol{n_\infty}} \ln \Big( \frac{\ol n}{\ol{n_\infty}} \Big) - \Big( \frac{\ol n}{\ol{n_\infty}} - 1 \Big) \right) \leq \ol{n_\infty} \left( \frac{\ol n}{\ol{n_\infty}} - 1 \right)^2 \leq 2 \ol{n_\infty} \left[ \bigg( \ol{\Big(\frac{n}{n_\infty}\Big)} - 1 \bigg)^2 + \bigg( \frac{\ol n}{\ol{n_\infty}} - \ol{\Big(\frac{n}{n_\infty}\Big)} \bigg)^2 \right]
\]
and an analogous estimate for the corresponding expressions involving $p$. The second term on the right hand side of the previous line can be bounded from above by applying Lemma \ref{lemmaflux}, which guarantees a constant $C(\gamma, \Gamma, M_1) > 0$ such that
\[
\bigg( \frac{\ol n}{\ol{n_\infty}} - \ol{\Big(\frac{n}{n_\infty}\Big)} \bigg)^2 \leq C \int_\Omega \left| \nabla \sqrt{ \frac{n}{n_\infty} } \right|^2 dx \leq \frac{C}{4 \inf_\Omega n_\infty} \int_\Omega \frac{1}{n} \left| n_\infty \, \nabla \Big( \frac{n}{n_\infty} \Big) \right|^2 dx \leq c_1 \int_\Omega \frac{|J_n|^2}{n} \, dx
\]
for some constant $c_1(\gamma, \Gamma, M_1) > 0$. Besides,
\begin{multline*}
\bigg( \ol{\Big(\frac{n}{n_\infty}\Big)} - 1 \bigg)^2 = \frac{1}{n_\ast^2} \bigg( \ol{\Big(\frac{n}{\mu_n}\Big)} - n_\ast \bigg)^2 = \frac{1}{n_\ast^2} \left( \sqrt{\ol{\Big(\frac{n}{\mu_n}\Big)}} + \sqrt{n_\ast} \right)^2 \left( \sqrt{\ol{\Big(\frac{n}{\mu_n}\Big)}} - \sqrt{n_\ast} \right)^2 \\
= \frac{1}{n_\ast} \left( \sqrt{\ol{\Big(\frac{n}{n_\infty}\Big)}} + 1 \right)^2 \left( \sqrt{\ol{\Big(\frac{n}{\mu_n}\Big)}} - \sqrt{n_\ast} \right)^2 \leq C(\gamma, M_1) \left( \sqrt{\ol{\Big(\frac{n}{\mu_n}\Big)}} - \sqrt{n_\ast} \right)^2.
\end{multline*}
See Proposition \ref{propbounds} and Lemma \ref{lemmal1bounds} for the bounds on $n_\ast$, $n_\infty$ and $\ol n$. We have thus verified that
\[
\ol{n_\infty} \left( \frac{\ol n}{\ol{n_\infty}} \ln \Big( \frac{\ol n}{\ol{n_\infty}} \Big) - \Big( \frac{\ol n}{\ol{n_\infty}} - 1 \Big) \right) \leq c_2 \left( \int_\Omega \frac{|J_n|^2}{n} \, dx + \bigg( \sqrt{\ol{\Big(\frac{n}{\mu_n}\Big)}} - \sqrt{n_\ast} \bigg)^2 \right)
\]
with some $c_2(\gamma, \Gamma, M_1) > 0$. 
A similar estimate holds true for the corresponding part of \eqref{eqentropyparts} involving $p$.

Considering the last line in \eqref{eqentropyparts}, we further know that for all $x \in \Omega$ there exists some mean value 
\[
\theta(x) \in (\min\{n_{tr}(x), n_{tr,\infty}\}, \max\{n_{tr}(x), n_{tr,\infty}\})
\]
such that
\begin{equation}
\label{eqlnmeanvalue}
\int_{n_{tr,\infty}}^{n_{tr}(x)} \ln \left( \frac{s}{1-s} \right) ds = \ln \left( \frac{\theta(x)}{1-\theta(x)} \right) (n_{tr}(x) - n_{tr,\infty}).
\end{equation}
Consequently,
\[
\varepsilon \int_\Omega \int_{n_{tr,\infty}}^{n_{tr}(x)} \ln \left( \frac{s}{1-s} \right) ds \, dx = \varepsilon \int_\Omega \ln \left( \frac{\theta(x)}{1-\theta(x)} \right) (n_{tr}(x) - n_{tr,\infty}) \, dx.
\]
In fact, we will prove that there even exists some constant $\xi \in (0,1/2)$ such that 
\[
\theta(x) \in (\xi, 1-\xi)
\]
for all $x \in \Omega$. Thus, the function $\theta(x)$ is uniformly bounded away from $0$ and $1$ on $\Omega$. To see this, we first note that $n_{tr,\infty} \in [\gamma, 1-\gamma]$ using the constant $\gamma \in (0, 1/2)$ from Proposition \ref{propbounds}. In addition,
\[
\left| \int_{n_{tr,\infty}}^{n_{tr}(x)} \ln \left( \frac{s}{1-s} \right) ds \right| \leq \int_{0}^1 \left| \ln \Big( \frac{s}{1-s} \Big) \right| ds = 2\ln(2)
\]
for all $x \in \Omega$. Together with \eqref{eqlnmeanvalue}, this estimate implies
\[
\left| \ln \left( \frac{\theta(x)}{1-\theta(x)} \right) \right| \big| n_{tr}(x) - n_{tr,\infty} \big| \leq 2 \ln(2).
\]
We now choose an arbitrary $x \in \Omega$ and distinguish two cases. If $| n_{tr}(x) - n_{tr,\infty} | \geq \gamma/2$, then 
\[
\left| \ln \left( \frac{\theta(x)}{1-\theta(x)} \right) \right| \leq \frac{2\ln(2)}{| n_{tr}(x) - n_{tr,\infty} |} \leq \frac{4\ln(2)}{\gamma}.
\]
As a consequence of $\ln(s/(1-s)) \rightarrow \infty$ for $s \rightarrow 1^-$ and $\ln(s/(1-s)) \rightarrow -\infty$ for $s \rightarrow 0^+$, there exists some constant $\xi \in (0,\gamma)$ depending only on $\gamma$ such that $\theta(x) \in (\xi, 1-\xi)$. If $| n_{tr}(x) - n_{tr,\infty} | < \gamma/2$, then $n_{tr,\infty} \in [\gamma, 1 - \gamma]$ implies $n_{tr}(x) \in (\gamma/2, 1-\gamma/2)$ and, hence, $\theta(x) \in (\gamma/2, 1-\gamma/2)$. Again the constant $\xi$ depends only on $\gamma$. 

As a result of the calculations above, we may rewrite the last line in \eqref{eqentropyparts} as
\vspace{-1ex}
\begin{multline*}
\varepsilon \int_\Omega \int_{n_{tr,\infty}}^{n_{tr}(x)} \left( \ln \Big( \frac{s}{1-s} \Big) - \ln \Big( \frac{n_{tr,\infty}}{1-n_{tr,\infty}} \Big) \right) ds \, dx \\
= \varepsilon \int_\Omega \left( \ln \Big( \frac{\theta(x)}{1-\theta(x)} \Big) - \ln \Big( \frac{n_{tr,\infty}}{1-n_{tr,\infty}} \Big) \right) (n_{tr}(x) - n_{tr,\infty}) \, dx.
\end{multline*}
Applying the mean-value theorem to the expression in brackets and observing that
\[
\frac{d}{ds} \ln \left( \frac{s}{1-s} \right) = \frac{1}{s(1-s)},
\]
we find 
\vspace{-1ex}
\begin{multline*}
\varepsilon \int_\Omega \left( \ln \Big( \frac{\theta(x)}{1-\theta(x)} \Big) - \ln \Big( \frac{n_{tr,\infty}}{1-n_{tr,\infty}} \Big) \right) (n_{tr}(x) - n_{tr,\infty}) \, dx \\
= \varepsilon \int_\Omega \frac{1}{\sigma(x)(1-\sigma(x))} (\theta(x) - n_{tr,\infty}) (n_{tr}(x) - n_{tr,\infty}) \, dx
\end{multline*}
with some $\sigma(x) \in (\min\{\theta(x), n_{tr,\infty}\}, \max\{\theta(x), n_{tr,\infty}\})$. Since both $\theta(x), n_{tr,\infty} \in (\xi, 1-\xi)$ for all $x \in \Omega$, we also know that $\sigma(x) \in (\xi, 1-\xi)$ for all $x \in \Omega$. Thus, $(\sigma(x)(1-\sigma(x)))^{-1}$ is bounded uniformly in $\Omega$ in terms of $\xi = \xi(\gamma)$. 
Consequently,
\vspace{-1ex} 
\begin{multline*}
\varepsilon \int_\Omega \int_{n_{tr,\infty}}^{n_{tr}(x)} \left( \ln \Big( \frac{s}{1-s} \Big) - \ln \Big( \frac{n_{tr,\infty}}{1-n_{tr,\infty}} \Big) \right) ds \, dx \\
\leq \varepsilon c_3 \int_\Omega |\theta(x) - n_{tr,\infty}| |n_{tr}(x) - n_{tr,\infty}| \, dx \leq \varepsilon c_3 \int_\Omega (n_{tr} - n_{tr,\infty})^2 \, dx \\
= \varepsilon c_3 \int_\Omega (\sqrt{n_{tr}} + \sqrt{n_{tr,\infty}})^2 (\sqrt{n_{tr}} - \sqrt{n_{tr,\infty}})^2 \, dx \leq 4 \varepsilon c_3 \int_\Omega \big(\sqrt{n_{tr}} - \sqrt{n_{tr,\infty}} \big)^2 \, dx
\end{multline*}
with a constant $c_3(\gamma) > 0$ after applying the estimate $|\theta(x) - n_{tr,\infty}| \leq |n_{tr}(x) - n_{tr,\infty}|$ for all $x \in \Omega$.
Finally, we arrive at
\vspace{-1ex}
\begin{multline*}
E(n,p,n_{tr}) - E(n_\infty, p_\infty, n_{tr,\infty}) \leq C \, \Bigg( \int_\Omega \left( \frac{|J_n|^2}{n} + \frac{|J_p|^2}{p} \right) dx \\
+ \bigg( \sqrt{\ol{\Big(\frac{n}{\mu_n}\Big)}} - \sqrt{n_\ast} \bigg)^2 + \bigg( \sqrt{\ol{\Big(\frac{p}{\mu_p}\Big)}} - \sqrt{p_\ast} \bigg)^2 + \varepsilon \int_\Omega \big( \sqrt{n_{tr}} - \sqrt{n_{tr,\infty}} \big)^2 \, dx \Bigg)
\end{multline*}
with a constant $C(\gamma, \Gamma, M_1) > 0$. 
\end{proof}
\end{proposition}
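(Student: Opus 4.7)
The plan is to start from the relative entropy formula provided by Lemma \ref{lemmarelativeentropy} and split each of the three contributions so it matches one of the four terms on the right-hand side of \eqref{eqpropeedineq}. The key identity is that the normalised equilibrium satisfies $\wt{n_\infty}=\wt{\mu_n}$ (since $n_\infty=n_\ast e^{-V_n}$), which suggests writing $n=\wt{n}\,\ol{n}$ and $n_\infty=\wt{\mu_n}\,\ol{n_\infty}$ and expanding
\[
n\ln\tfrac{n}{n_\infty}-(n-n_\infty) \;=\; n\ln\tfrac{\wt n}{\wt{\mu_n}} \;+\; \ol{n_\infty}\!\left(\tfrac{\ol n}{\ol{n_\infty}}\ln\tfrac{\ol n}{\ol{n_\infty}} - (\tfrac{\ol n}{\ol{n_\infty}}-1)\right),
\]
with an analogous decomposition for the $p$-terms. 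The first piece is controlled immediately by the flux integral $\int|J_n|^2/n\,dx$ via the logarithmic Sobolev inequality of Proposition \ref{proplogsob}.

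For the second piece, I would apply the elementary inequality $x\ln x-(x-1)\le(x-1)^2$ (for $x>0$) and split
\[
\tfrac{\ol n}{\ol{n_\infty}}-1 \;=\; \bigl(\tfrac{\ol n}{\ol{n_\infty}}-\ol{(n/n_\infty)}\bigr) + \bigl(\ol{(n/n_\infty)}-1\bigr)
\]
via $(a+b)^2\le 2a^2+2b^2$. The first difference is precisely the shape estimated by Lemma \ref{lemmaflux} with $f=n$ and $g=n_\infty$, which yields a multiple of $\int|J_n|^2/n\,dx$ after recalling $n_\infty\ge\gamma$ (Proposition \ref{propbounds}). The second difference equals $n_\ast^{-1}\bigl(\ol{(n/\mu_n)}-n_\ast\bigr)$; factoring $a-b=(\sqrt{a}+\sqrt{b})(\sqrt{a}-\sqrt{b})$ and using the uniform bounds on $n_\ast$ and on $\ol{(n/\mu_n)}\le e^V\ol n\le e^V M_1$ (from Proposition \ref{propbounds} and Lemma \ref{lemmal1bounds}) collapses this to a constant times $\bigl(\sqrt{\ol{(n/\mu_n)}}-\sqrt{n_\ast}\bigr)^2$, exactly the target term.

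For the trapped-state contribution, the idea is to apply the mean-value theorem twice: first to write
\[
\int_{n_{tr,\infty}}^{n_{tr}(x)}\!\ln\tfrac{s}{1-s}\,ds = \ln\tfrac{\theta(x)}{1-\theta(x)}\,\bigl(n_{tr}(x)-n_{tr,\infty}\bigr),
\]
and then to expand $\ln\tfrac{\theta}{1-\theta}-\ln\tfrac{n_{tr,\infty}}{1-n_{tr,\infty}}=\tfrac{1}{\sigma(1-\sigma)}(\theta-n_{tr,\infty})$ for some intermediate $\sigma(x)$. Since $|\theta-n_{tr,\infty}|\le|n_{tr}-n_{tr,\infty}|$, this bounds the trapped-state integral by $C\int(n_{tr}-n_{tr,\infty})^2\,dx$, which factors as $\int(\sqrt{n_{tr}}+\sqrt{n_{tr,\infty}})^2(\sqrt{n_{tr}}-\sqrt{n_{tr,\infty}})^2\,dx$ and is $\le 4\int(\sqrt{n_{tr}}-\sqrt{n_{tr,\infty}})^2\,dx$, matching the last term.

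The main obstacle is showing that $\sigma(x)$ (and hence $(\sigma(1-\sigma))^{-1}$) is uniformly bounded on $\Omega$, since a priori $\theta(x)$ could approach $0$ or $1$ when $n_{tr}$ does. The trick is that $\bigl|\int_{n_{tr,\infty}}^{n_{tr}(x)}\ln\tfrac{s}{1-s}\,ds\bigr|\le\int_0^1|\ln\tfrac{s}{1-s}|\,ds=2\ln 2$ is uniformly bounded, so $|\ln\tfrac{\theta}{1-\theta}|\cdot|n_{tr}-n_{tr,\infty}|\le 2\ln 2$. A case distinction on whether $|n_{tr}(x)-n_{tr,\infty}|$ is larger or smaller than $\gamma/2$ then traps $\theta(x)$ (and thus $\sigma(x)$) in a compact subinterval $(\xi,1-\xi)\subset(0,1)$ depending only on the equilibrium bound $\gamma$ from Proposition \ref{propbounds}. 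Combining the three estimates produces the desired inequality with a constant $C=C(\gamma,\Gamma,M_1)$.
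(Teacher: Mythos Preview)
Your proposal is correct and follows essentially the same approach as the paper's proof: the same decomposition via $\wt{n_\infty}=\wt{\mu_n}$, the same use of Proposition~\ref{proplogsob} and Lemma~\ref{lemmaflux} for the $n$- and $p$-contributions, and the same double mean-value argument with the $2\ln 2$ bound and the $\gamma/2$ case distinction to control the trapped-state term.
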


\section{Abstract versions of the EEP-inequality}
\label{sectionabstract}
\begin{notation}
\label{notabstract}
We set 
\[
n_{tr}' := 1 - n_{tr}, \quad n_{tr, \infty}' := 1 - n_{tr, \infty}
\]
and define the positive constants
\[
\nu_\infty := \sqrt{\frac{n_\ast}{n_0}} = \sqrt{\frac{n_\infty}{n_0 \mu_n}}, \quad \pi_\infty := \sqrt{\frac{p_\ast}{p_0}} = \sqrt{\frac{p_\infty}{p_0 \mu_p}}, \quad \nu_{tr, \infty} := \sqrt{n_{tr, \infty}}, \quad \nu_{tr, \infty}' := \sqrt{n_{tr, \infty}'}.
\]
\end{notation}

The motivation for introducing the additional variable $n_{tr}'$ is the possibility to symmetrise expressions like $(n(1-n_{tr}) - n_{tr})^2 + (pn_{tr} - (1-n_{tr}))^2$ as $(nn_{tr}' - n_{tr})^2 + (pn_{tr} - n_{tr}')^2$. Similar terms will appear frequently within the subsequent calculations.

\begin{remark}
\label{rem4var}
We may consider $n_{tr}'$ as a fourth independent variable within our model. In this case, the reaction-diffusion system features the following two independent conservation laws:
\begin{equation*}
\ol n - \ol p + \varepsilon\,\ol{n_{tr}} = n_0 \, \ol{\mu_n \left(\frac{n}{n_0 \mu_n}\right)} - p_0 \, \ol{\mu_p \left(\frac{p}{p_0 \mu_p}\right)} + \varepsilon \, \ol{n_{tr}} = M \in \mathbb{R}, \qquad n_{tr}(x) + n_{tr}'(x) = 1 \ \mbox{for all} \ x \in \Omega.
\end{equation*}

The special formulation of the first conservation law will become clear when looking at the following two Propositions. There, we derive relations for general variables $a$, $b$, $c$ and $d$, which correspond to $\sqrt{n/(n_0 \mu_n)}$, $\sqrt{p/(p_0 \mu_p)}$, $\sqrt{n_{tr}}$ and $\sqrt{n_{tr}'}$, respectively.

In addition, we have the following $L^1$-bound (cf. Lemma \ref{lemmal1bounds}):
\[
\ol n, \ol p \leq M_1.
\]
\end{remark}

The following Proposition \ref{propeedodeconst} establishes an upper bound for the terms in the second line of \eqref{eqpropeedineq} in the case of \emph{constant} concentrations $a$, $b$, $c$ and $d$. This result is then generalised in Proposition \ref{propeedpde} to \emph{non-constant} states $a$, $b$, $c$, $d$.

\begin{proposition}[Homogeneous Concentrations]
\label{propeedodeconst}
Let $a, b, c, d \geq 0$ be constants such that their squares satisfy the conservation laws
\[
n_0 \ol{\mu_n} a^2 - p_0 \ol{\mu_p} b^2 + \varepsilon \, c^2 = M = n_0 \ol{\mu_n} \nu_\infty^2 - p_0 \ol{\mu_p} \pi_\infty^2 + \varepsilon \, \nu_{tr,\infty}^2, \qquad c^2 + d^2 = 1 = \nu_{tr,\infty}^2 + \nu_{tr,\infty}'^{\, 2}
\]
for any $\varepsilon\in(0,\varepsilon_0]$ and arbitrary $\varepsilon_0>0$. Moreover, assume  
\[
a^2, b^2 \leq C(n_0, p_0, M_1, V).
\]
Then, there exists an explicitly computable constant $C(\varepsilon_0, n_0, p_0, M, M_1, V) > 0$  such that
\begin{equation}
\label{eqeedode}
(a - \nu_{\infty})^2 + (b - \pi_\infty)^2 + (c - \nu_{tr,\infty})^2 \leq C \left( (ad - c)^2 + (bc - d)^2 \right)
\end{equation}
for all $\varepsilon \in(0,\varepsilon_0]$.
\end{proposition}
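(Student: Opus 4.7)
My plan is to first reduce the estimate to a linearised problem around the equilibrium point $(\nu_\infty, \pi_\infty, \nu_{tr,\infty}, \nu_{tr,\infty}')$ by introducing the deviation variables $\delta_a := a - \nu_\infty$, $\delta_b := b - \pi_\infty$, $\delta_c := c - \nu_{tr,\infty}$, $\delta_d := d - \nu_{tr,\infty}'$. The crucial algebraic identities are the two equilibrium relations $\nu_\infty \nu_{tr,\infty}' = \nu_{tr,\infty}$ and $\pi_\infty \nu_{tr,\infty} = \nu_{tr,\infty}'$, which are immediate consequences of \eqref{eqequiformulae} after taking square roots. Using these, the two residuals expand as
\[
ad - c = \nu_{tr,\infty}'\,\delta_a + \nu_\infty\,\delta_d - \delta_c + \delta_a \delta_d,\qquad
bc - d = \nu_{tr,\infty}\,\delta_b + \pi_\infty\,\delta_c - \delta_d + \delta_b \delta_c,
\]
so that, up to quadratic corrections which can be controlled by the a priori bounds on $a^2, b^2$, the right-hand side of \eqref{eqeedode} controls two linear combinations of $(\delta_a, \delta_b, \delta_c, \delta_d)$.

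Second, the two scalar constraints supply two further linear relations. The identity $c^2+d^2=1=\nu_{tr,\infty}^2+\nu_{tr,\infty}'^{\,2}$ gives $(c+\nu_{tr,\infty})\delta_c + (d+\nu_{tr,\infty}')\delta_d = 0$, so that $\delta_d$ is proportional to $\delta_c$ with a coefficient bounded away from zero thanks to Proposition \ref{propbounds}. The mass conservation law yields $n_0\ol{\mu_n}(a+\nu_\infty)\delta_a - p_0\ol{\mu_p}(b+\pi_\infty)\delta_b + \varepsilon(c+\nu_{tr,\infty})\delta_c = 0$. Substituting everything, one sees that if both residuals $ad-c$ and $bc-d$ vanish together with the two constraints, the resulting homogeneous $4\times 4$ linear system in $(\delta_a, \delta_b, \delta_c, \delta_d)$ has a strictly nonzero determinant: every term contributing to the coefficient of $\delta_c$ in the mass-conservation line has the same sign and each is uniformly bounded below in terms of $\varepsilon_0, n_0, p_0, M, V$. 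This is precisely the linear-algebra heart of the matter and it gives the estimate \eqref{eqeedode} in a neighbourhood of the equilibrium.

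Finally, to pass from this local linearised estimate to the full inequality with a constant that is uniform in $\varepsilon\in(0,\varepsilon_0]$, I would argue by contradiction and compactness. Suppose no such $C$ existed; then there would be sequences $(a_k,b_k,c_k,d_k,\varepsilon_k)$ satisfying the hypotheses with the left-hand side of \eqref{eqeedode} normalised to one while the right-hand side tends to zero. The uniform bounds $a_k^2, b_k^2 \le C(n_0,p_0,M_1,V)$ and $c_k^2 + d_k^2 = 1$ allow extraction of a convergent subsequence with $\varepsilon_k \to \varepsilon_\infty \in [0,\varepsilon_0]$; the limit point would then satisfy $a_\infty d_\infty = c_\infty$ and $b_\infty c_\infty = d_\infty$ together with both conservation laws, and Theorem \ref{theoremequilibrium} combined with Proposition \ref{propbounds} forces it to coincide with the unique equilibrium, contradicting the normalisation. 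The main obstacle I anticipate is the singular limit $\varepsilon_\infty = 0$, where the ODE-type conservation law degenerates; this is handled by observing that the equilibrium equations $n_\ast p_\ast = n_0 p_0$ and $n_\ast\ol{\mu_n} - p_\ast\ol{\mu_p} = M$ still determine $(n_\ast, p_\ast)$ uniquely and that the lower bounds on $\nu_{tr,\infty}, \nu_{tr,\infty}'$ from Proposition \ref{propbounds} remain in force, so the limiting equilibrium is still strictly positive and the contradiction persists.
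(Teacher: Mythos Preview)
Your linearisation idea is sound and genuinely different from the paper's route. The paper uses a \emph{multiplicative} ansatz $a=\nu_\infty(1+\mu_1)$, $b=\pi_\infty(1+\mu_2)$, $c=\nu_{tr,\infty}(1+\mu_3)$, $d=\nu_{tr,\infty}'(1+\mu_4)$ (so that $\mu_i\in[-1,K]$), rewrites the two constraints as implicit relations expressing $\varepsilon\mu_3,\varepsilon\mu_4$ in terms of $\mu_1,\mu_2$ with uniformly positive and bounded coefficients, and then proves
\[
[(1+\mu_1)(1+\mu_4)-(1+\mu_3)]^2 + [(1+\mu_2)(1+\mu_3)-(1+\mu_4)]^2 \;\ge\; c\,(\mu_1^2+\mu_2^2+\mu_3^2)
\]
by a direct case analysis over the four sign combinations of $(\mu_1,\mu_2)$; each case uses only elementary sign manipulations such as $\mu_i(1+\mu_j)\ge0$ iff $\mu_i\ge0$, together with the sign of $\mu_3,\mu_4$ forced by the constraints. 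No compactness and no local-versus-global distinction appear.

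The genuine gap in your plan is the compactness step: the Proposition demands an \emph{explicitly computable} constant, and the entire downstream strategy (Theorems \ref{theoremeed} and \ref{theoremconvergence}) hinges on quantitative, $\varepsilon$-uniform rates. A contradiction-by-subsequence argument yields existence of \emph{some} $C$ but destroys computability, which is precisely the point of the paper. Interestingly, your linear-algebra observation can be upgraded to a global explicit bound without compactness. The expansions $ad-c = d\,\delta_a - (1+\nu_\infty\lambda)\delta_c$ and $bc-d = c\,\delta_b + (\pi_\infty+\lambda)\delta_c$ with $\lambda=(c+\nu_{tr,\infty})/(d+\nu_{tr,\infty}')$ are \emph{exact} (the quadratic remainders are absorbed into the factors $d=\nu_{tr,\infty}'+\delta_d$ and $c=\nu_{tr,\infty}+\delta_c$), and after substitution into the mass law the coefficient of $\delta_c$ is a sum of three nonnegative terms, two of which are bounded below by $n_0\ol{\mu_n}\nu_\infty/d$ and $p_0\ol{\mu_p}\pi_\infty^2/c$. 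Combined with $c^2+d^2=1$ this already gives $|\delta_c|\le C(|R_1|+|R_2|)$ explicitly. The bounds on $\delta_a,\delta_b$ then follow once one treats the regimes $d$ small (where $|ad-c|\ge c-ad$ is bounded below, so \eqref{eqeedode} is trivial) and $c$ small separately --- a case split of the same flavour as Cases 2.1--2.4 in Proposition \ref{propeedpde}. So the missing ingredient is to replace the compactness argument by this explicit far-from-equilibrium analysis; otherwise you prove a weaker statement than the one claimed.
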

\begin{proof}
We first introduce the following change of variable: Due to the non-negativity of the concentrations $a, b, c, d$, we define constants $\mu_1, \mu_2, \mu_3, \mu_4 \in [-1,\infty)$ such that
\[
a = \nu_\infty (1 + \mu_1), \qquad b = \pi_\infty (1 + \mu_2), \qquad c = \nu_{tr,\infty} (1 + \mu_3), \qquad d = \nu_{tr,\infty}' (1 + \mu_4),
\]
where $\nu_\infty$, $\pi_\infty$, $\nu_{tr,\infty}$ and $\nu_{tr,\infty}'$ are uniformly positive and bounded for all $\varepsilon \in(0,\varepsilon_0]$ in terms of $\varepsilon_0, n_0, p_0, M$ and $V$ by (the proof of) Propositions \ref{propbounds}. 
Thus, the boundedness of $a, b, c, d$  
implies the existence of a constant $K(\varepsilon_0, n_0, p_0, M, M_1, V)>0$, 
such that $\mu_i \in [-1,K]$ for all $1 \leq i \leq 4$. The left hand side of \eqref{eqeedode} expressed in terms of the $\mu_i$ rewrites as
\[
(a - \nu_{\infty})^2 + (b - \pi_\infty)^2 + (c - \nu_{tr,\infty})^2 = \nu_\infty^2 \mu_1^2 + \pi_\infty^2 \mu_2^2 + \nu_{tr,\infty}^2 \mu_3^2.
\]
Employing the equilibrium conditions \eqref{eqequiformulae}, we also find
\[
ad - c = \nu_\infty \nu_{tr,\infty}' (1 + \mu_1)(1 + \mu_4) - \nu_{tr,\infty}(1 + \mu_3) = \nu_{tr,\infty} \left[ (1+\mu_1)(1+\mu_4) - (1+\mu_3) \right]
\]
and
\[
bc - d = \pi_\infty \nu_{tr,\infty} (1 + \mu_2)(1 + \mu_3) - \nu_{tr,\infty}'(1 + \mu_4) = \nu_{tr,\infty}' \left[ (1+\mu_2)(1+\mu_3) - (1+\mu_4) \right].
\]
Moreover, the two conservation laws from the hypotheses rewrite as 
\begin{align}
n_0 \ol{\mu_n} \nu_\infty^2 \mu_1 (2 + \mu_1) - p_0 \ol{\mu_p} \pi_\infty^2 \mu_2 (2 + \mu_2) + \varepsilon \, \nu_{tr,\infty}^2 \mu_3(2 + \mu_3) &= 0, \label{CL1}\\
\nu_{tr,\infty}^2 \mu_3 (2 + \mu_3) + \nu_{tr,\infty}'^{\, 2} \mu_4 (2 + \mu_4) &= 0.\label{CL2}
\end{align}
The relations \eqref{CL1}--\eqref{CL2} allow to express $\varepsilon \mu_3$ and $\varepsilon \mu_4$ in terms of $\mu_1$ and $\mu_2$, although not explicitly:
\begin{align}
\varepsilon \mu_3 = - \frac{n_0 \ol{\mu_n} \nu_\infty^2}{\nu_{tr,\infty}^2} \frac{2 + \mu_1}{2 + \mu_3}\,  \mu_1 + \frac{p_0 \ol{\mu_p} \pi_\infty^2}{\nu_{tr,\infty}^2} \frac{2 + \mu_2}{2 + \mu_3} \, \mu_2 &=: - f_{1,3}(\mu_1, \mu_3) \mu_1 + f_{2,3}(\mu_2, \mu_3) \mu_2, \label{eqeedodec1} \\
\varepsilon \mu_4 = - \frac{\nu_{tr,\infty}^2}{\nu_{tr,\infty}'^{\, 2}} \frac{2 + \mu_3}{2 + \mu_4} \,\varepsilon \mu_3 =: - f_{3,4}(\mu_3, \mu_4)\, \varepsilon \mu_3 &=:  f_{1,4}(\mu_1, \mu_3,\mu_4) \mu_1 - f_{2,4}(\mu_2, \mu_3, \mu_4) \mu_2, \label{eqeedodec2}
\end{align}
where the last definition follows from inserting the previous expression \eqref{eqeedodec1} for $\varepsilon \mu_3$ while the factor 
$2+\mu_3$ is bounded in $[1,K+2]$ since $\mu_i \in [-1,K]$ for all $1 \leq i \leq 4$. Therefore, all the terms $f_{i,j}$ are uniformly positive as well as bounded from above:
\begin{align*}
&0 < \underline{C_{1,3}} \leq f_{1,3} \leq \ol{C_{1,3}} < \infty, \quad 0 < \underline{C_{2,3}} \leq f_{2,3} \leq \ol{C_{2,3}} < \infty, \\
0 < \underline{C_{3,4}} \leq f_{3,4} \leq \ol{C_{3,4}} < \infty, \quad &0 < \underline{C_{1,4}} \leq f_{1,4} \leq \ol{C_{1,4}} < \infty, \quad 0 < \underline{C_{2,4}} \leq f_{2,4} \leq \ol{C_{2,4}} < \infty.
\end{align*}
All constants $\underline{C_{i,j}}$ and $\ol{C_{i,j}}$ only depend on $\varepsilon_0$, $n_0$, $p_0$, $M$, $M_1$ and $V$, and there exist corresponding bounds $\underline{C} > 0$ and $\ol C > 0$ such that for all $i,j$
\[
\underline{C} \leq \underline{C_{i,j}}, \ol{C_{i,j}} \leq \ol C.
\]

In order to prove \eqref{eqeedode}, we show that under the constraints of the conservation laws \eqref{CL1}--\eqref{CL2}, respectively, the relations \eqref{eqeedodec1}--\eqref{eqeedodec2}, there exists a constant $C(\varepsilon_0, n_0, p_0, M, \underline{C}, \ol C) > 0$ for 
all $\varepsilon \in (0,\varepsilon_0]$ such that
\[
\frac{(a - \nu_{\infty})^2 + (b - \pi_\infty)^2 + (c - \nu_{tr,\infty})^2}{(ad - c)^2 + (bc - d)^2} \leq C,
\]
which is equivalent to 
\begin{equation}
\label{eqmufraction}
\frac{\nu_\infty^2 \mu_1^2 + \pi_\infty^2 \mu_2^2 + \nu_{tr,\infty}^2 \mu_3^2}{\nu_{tr,\infty}^2 \left[ (1+\mu_1)(1+\mu_4) - (1+\mu_3) \right]^2 + \nu_{tr,\infty}'^{\, 2} \left[ (1+\mu_2)(1+\mu_3) - (1+\mu_4) \right]^2} \leq C.
\end{equation}
Recall that $\nu_\infty^2 \leq \Gamma/n_0$, $\pi_\infty^2 \leq \Gamma/p_0$ and $\nu_{tr,\infty}^2, \nu_{tr,\infty}'^{\, 2} \in [\gamma, 1-\gamma]$ with $\gamma \in (0, 1/2)$ and $\Gamma \in (1/2, \infty)$ depending on $\varepsilon_0$, $n_0$, $p_0$ and $M$ for all $\varepsilon\in(0,\varepsilon_0]$ (cf. the proof of Proposition \ref{propbounds}). Since numerator and denominator of \eqref{eqmufraction} are sums of quadratic terms, it is sufficient to bound the denominator from below in terms of its numerator omitting the prefactors $\nu_\infty^2$, $\pi_\infty^2$, $\nu_{tr,\infty}^2$ and $\nu_{tr,\infty}'^{\,2}$, i.e. to prove that 
\begin{equation}\label{ef2}
(\ast) := \left[ (1+\mu_1)(1+\mu_4) - (1+\mu_3) \right]^2 + \left[ (1+\mu_2)(1+\mu_3) - (1+\mu_4) \right]^2
\ge C \left(\mu_1^2 + \mu_2^2 +  \mu_3^2\right).
\end{equation}
More precisely, we will prove that there exists a constant $c(\varepsilon_0, \underline{C}, \ol C) > 0$ for 
all $\varepsilon \in (0,\varepsilon_0]$ such that
\[
(\ast) = \left( \mu_1 + \mu_4 + \mu_1 \mu_4 - \mu_3 \right)^2 + \left( \mu_2 + \mu_3 + \mu_2 \mu_3 - \mu_4 \right)^2 \geq c \,(\mu_1^2 + \mu_2^2)
\]
and that
\[
(\ast) = \left( \mu_1 + \mu_4 + \mu_1 \mu_4 - \mu_3 \right)^2 + \left( \mu_2 + \mu_3 + \mu_2 \mu_3 - \mu_4 \right)^2 \geq \mu_3^2.
\]
For this reason, we distinguish four cases and we shall frequently use estimates like 
\[
\mu_i + \mu_i \mu_j = \mu_i (1 + \mu_j) \geq 0 \quad \mbox{iff} \quad \mu_i \geq 0 \quad \text{for all} \quad 1 \leq j \leq 4,
\]
since $\mu_j \geq -1$ for all $1 \leq j \leq 4$. We mention already here that all subsequent constants $c_1$, $c_2$ are strictly positive and depend only on $\varepsilon_0$,  $\underline{C}$ and $\ol C$ uniformly for $\varepsilon\in(0,\varepsilon_0]$.
\begin{description}
\item[\underline{Case 1: $\mu_1 \geq 0 \land \mu_2 \geq 0$:}] If $\mu_3 \geq 0$, then \eqref{CL2} implies $\mu_4 \leq 0$ and $\mu_2 + \mu_3 + \mu_2 \mu_3 - \mu_4 \geq \mu_2$. Moreover, $\mu_3 \geq 0$ yields 
\[
f_{2,3} \mu_2 \geq f_{1,3} \mu_1 \ \Rightarrow \ \ol{C_{2,3}} \mu_2 \geq \underline{C_{1,3}} \mu_1 \ \Rightarrow \ \mu_2 \geq \underline{C_{1,3}} / \ol{C_{2,3}} \, \mu_1
\]
and
\[
\mu_2 + \mu_3 + \mu_2 \mu_3 - \mu_4 \geq \mu_2 \geq \mu_2 / 2 + \underline{C_{1,3}} / ( 2 \, \ol{C_{2,3}} ) \mu_1 \geq c_1 (\mu_1 + \mu_2).
\]
Hence, $(\ast) \geq \left( \mu_2 + \mu_3 + \mu_2 \mu_3 - \mu_4 \right)^2 \geq c_2 (\mu_1^2 + \mu_2^2)$. Besides, $(\ast) \geq \left( \mu_2 + \mu_3 + \mu_2 \mu_3 - \mu_4 \right)^2 \geq \mu_3^2$.



If $\mu_3 < 0$, \eqref{CL2} yields $\mu_4 > 0$ and $\mu_1 + \mu_4 + \mu_1 \mu_4 - \mu_3 \geq \mu_1$. Since $\mu_3 < 0$, \eqref{eqeedodec1} implies
\[
f_{1,3} \mu_1 \geq f_{2,3} \mu_2 \ \Rightarrow \ \ol{C_{1,3}} \mu_1 \geq \underline{C_{2,3}} \mu_2 \ \Rightarrow \ \mu_1 \geq \underline{C_{2,3}} / \ol{C_{1,3}} \, \mu_2
\]
and
\[
\mu_1 + \mu_4 + \mu_1 \mu_4 - \mu_3 \geq \mu_1 \geq \mu_1 / 2 + \underline{C_{2,3}} / ( 2 \, \ol{C_{1,3}} ) \mu_2 \geq c_1 (\mu_1 + \mu_2).
\]
As above, $(\ast) \geq c_2 (\mu_1^2 + \mu_2^2)$. The signs $\mu_3 \leq 0 \le \mu_1,\mu_4$ yield $(\ast) \geq \left( \mu_1 + \mu_4 + \mu_1 \mu_4 - \mu_3 \right)^2 \geq \mu_3^2$.

\item[\underline{Case 2: $\mu_1 \geq 0 \land \mu_2 < 0$:}] \eqref{eqeedodec1} and \eqref{eqeedodec2} imply $\mu_3 \leq 0$ and $\mu_4 \geq 0$, and we deduce for all $\varepsilon\in(0,\varepsilon_0]$
\vspace{-1ex}
\begin{multline*}
\mu_1 + \mu_4 + \mu_1 \mu_4 - \mu_3 \geq \mu_4 - \mu_3 = \varepsilon^{-1}(f_{1,3} + f_{1,4}) \mu_1 - \varepsilon^{-1}(f_{2,3} + f_{2,4}) \mu_2 \\ \geq \varepsilon_0^{-1}(\underline{C_{1,3}} + \underline{C_{1,4}}) |\mu_1| + \varepsilon_0^{-1}(\underline{C_{2,3}} + \underline{C_{2,4}}) |\mu_2| \geq c_1 (|\mu_1| + |\mu_2|)
\end{multline*}
and, thus, $(\ast) \geq (\mu_1 + \mu_4 + \mu_1 \mu_4 - \mu_3)^2 \geq c_2 (\mu_1^2 + \mu_2^2)$. Since $\mu_2, \mu_3 \le 0 \le \mu_4$, we have 
\[
(\ast) \geq \left( \mu_4 - \mu_3 - \mu_2 (1 + \mu_3) \right)^2 \geq \mu_3^2.
\]

\item[\underline{Case 3: $\mu_1 < 0 \land \mu_2 \geq 0$:}] Here, $\mu_3 \geq 0$ due to \eqref{eqeedodec1} and, thus, $\mu_4 \leq 0$ by \eqref{eqeedodec2}, which yields for all $\varepsilon\in(0,\varepsilon_0]$ 
\vspace{-1ex}
\begin{multline*}
\mu_2 + \mu_3 + \mu_2 \mu_3 - \mu_4 \geq \mu_3 - \mu_4 = \varepsilon^{-1}(f_{2,3} + f_{2,4}) \mu_2 - \varepsilon^{-1}(f_{1,3} + f_{1,4}) \mu_1 \\ \geq \varepsilon_0^{-1}(\underline{C_{1,3}} + \underline{C_{1,4}}) |\mu_1| + \varepsilon_0^{-1}(\underline{C_{2,3}} + \underline{C_{2,4}}) |\mu_2| \geq c_1 (|\mu_1| + |\mu_2|)
\end{multline*}
and $(\ast) \geq (\mu_2 + \mu_3 + \mu_2 \mu_3 - \mu_4)^2 \geq c_2 (\mu_1^2 + \mu_2^2)$. And as $\mu_1,\mu_4 \le 0\le \mu_3$, one has 
\[
(\ast) \geq \left( \mu_3 - \mu_4 - \mu_1 (1 + \mu_4) \right)^2 \geq \mu_3^2.
\]

\item[\underline{Case 4: $\mu_1 < 0 \land \mu_2 < 0$:}] 
Supposing that $\mu_3 \geq 0$ and thus $\mu_4 \leq 0$ by \eqref{eqeedodec2}, we observe 
\[
|\mu_1 + \mu_4 + \mu_1 \mu_4 - \mu_3| = \mu_3 - \mu_1 - \mu_4(1 + \mu_1) \geq  -\mu_1.
\]
Furthermore, $\mu_3 \geq 0$ enables us to estimate 
\[
f_{1,3} \mu_1 \leq f_{2,3} \mu_2 \ \Rightarrow \ \ol{C_{1,3}} \mu_1 \leq \underline{C_{2,3}} \mu_2 \ \Rightarrow \ -\mu_1 \geq -\underline{C_{2,3}} / \ol{C_{1,3}} \, \mu_2.
\]
and
\[
|\mu_1 + \mu_4 + \mu_1 \mu_4 - \mu_3| \geq -\mu_1 \geq -\mu_1 / 2 - \underline{C_{2,3}} / ( 2 \, \ol{C_{1,3}} ) \mu_2 \geq c_1 (|\mu_1| + |\mu_2|).
\]
Hence, $(\ast) \geq (\mu_1 + \mu_4 + \mu_1 \mu_4 - \mu_3)^2 \geq c_2 (\mu_1^2 + \mu_2^2)$. The second estimate in terms of $\mu_3^2$ follows with $\mu_1, \mu_4 \le 0 \le \mu_3$ from
\[
(\ast) \geq \left( \mu_3 - \mu_4 - \mu_1 (1 + \mu_4) \right)^2 \geq \mu_3^2.
\]

In the opposite case that $\mu_3 < 0$ and thus $\mu_4 \geq 0$ due to \eqref{eqeedodec2}, we estimate 
\[
|\mu_2 + \mu_3 + \mu_2 \mu_3 - \mu_4| = \mu_4 - \mu_2 - \mu_3 (1+\mu_2) \geq -\mu_2
\]
and
\[
f_{2,3} \mu_2 \leq f_{1,3} \mu_1 \ \Rightarrow \ \ol{C_{2,3}} \mu_2 \leq \underline{C_{1,3}} \mu_1 \ \Rightarrow \ -\mu_2 \geq -\underline{C_{1,3}} / \ol{C_{2,3}} \, \mu_1.
\]
We, thus, arrive at
\[
|\mu_2 + \mu_3 + \mu_2 \mu_3 - \mu_4| \geq -\mu_2 \geq -\mu_2 / 2 - \underline{C_{1,3}} / ( 2 \, \ol{C_{2,3}} ) \mu_1 \geq c_1 (|\mu_1| + |\mu_2|)
\]
and $(\ast) \geq (\mu_2 + \mu_3 + \mu_2 \mu_3 - \mu_4)^2 \geq c_2 (\mu_1^2 + \mu_2^2)$. The corresponding inequality for $\mu_3$ reads
\[
(\ast) \geq \left( \mu_4 - \mu_3 - \mu_2 (1 + \mu_3) \right)^2  \geq \mu_3^2,
\]
which follows from $\mu_2,\mu_3 \leq 0 \leq \mu_4$.
\end{description}
The proof of the Proposition is now complete.
\end{proof}

\begin{notation}
From now on, $\|\cdot\|$ without further specification shall always denote the $L^2$-norm in $\Omega$.
\end{notation}

Within the subsequent Proposition \ref{propeedpde}, the expressions $(ad-c)^2$ and $(bc-d)^2$ on the right hand side of \eqref{eqeedode} will be generalised to $\|ad-c\|^2$ and $\|bc-d\|^2$ in Equation \eqref{eqeedodefunc}. We will later show in the proof of Theorem \ref{theoremeed} that $\|ad-c\|^2$ (and also $\|bc-d\|^2$) can be estimated from above via the reaction terms within the entropy production \eqref{eqproduction} when using the special choices $\sqrt{n/(n_0 \mu_n)}$, $\sqrt{p/(p_0 \mu_p)}$, $\sqrt{n_{tr}}$ and $\sqrt{n_{tr}'}$ for $a$, $b$, $c$ and $d$.

\begin{proposition}[Inhomogeneous Concentrations]
\label{propeedpde}
Let $a, b, c, d: \Omega \rightarrow \mathbb{R}$ be measurable, non-negative functions such that their squares satisfy the conservation laws
\[
n_0 \ol{\mu_n a^2} - p_0 \ol{\mu_p b^2} + \varepsilon \, \ol{c^2} = M = n_0 \ol{\mu_n} \nu_\infty^2 - p_0 \ol{\mu_p} \pi_\infty^2 + \varepsilon \, \nu_{tr,\infty}^2, \qquad \ol{c^2} + \ol{d^2} = 1 = \nu_{tr,\infty}^2 + \nu_{tr,\infty}'^{\, 2}
\]
for any $\varepsilon\in(0,\varepsilon_0]$ and arbitrary $\varepsilon_0>0$. In addition, we assume  
\[
\ol{a^2}, \, \ol{b^2} \leq C(n_0, p_0, M_1, V).
\]
Then, there exists an explicitly computable constant $C(\varepsilon_0, n_0, p_0, M, M_1, V) > 0$ such that 
\vspace{-1ex}
\begin{multline}
\label{eqeedodefunc}
\Big( \sqrt{\ol{a^2}} - \nu_\infty \Big)^2 + \Big( \sqrt{\ol{b^2}} - \pi_\infty \Big)^2 + \| c - \nu_{tr,\infty} \|^2 \\ \leq C \, \Big( \| ad - c \|^2 + \| bc - d \|^2 + \| \nabla a \|^2 + \| \nabla b \|^2 + \| a - \ol a \|^2 + \| b - \ol b \|^2 + \| c - \ol c \|^2 + \| d - \ol d \|^2 \Big)
\end{multline}
for all $\varepsilon \in(0,\varepsilon_0]$.

\begin{proof}
We divide the proof into two steps. In the first part, we shall derive lower bounds for the reaction terms $\| ad - c \|^2 + \| bc - d \|^2$ involving $( \ol a \, \ol d - \ol c )^2 + ( \ol b \, \ol c - \ol d )^2$. This will allow us to apply Proposition \ref{propeedodeconst} in the second step.
\paragraph{Step 1:} We show
\[
\| ad - c \|^2 \geq \frac{1}{2} \big(\ol a \, \ol d - \ol c \big)^2 - c_1 \left( \| a - \ol a \|^2 + \| b - \ol b \|^2 + \| c - \ol c \|^2 + \| d - \ol d \|^2 \right),
\]
and
\[
\| bc - d \|^2 \geq \frac{1}{2} \big(\ol b \, \ol c - \ol d \big)^2 - c_1 \left( \| a - \ol a \|^2 + \| b - \ol b \|^2 + \| c - \ol c \|^2 + \| d - \ol d \|^2 \right)
\]
with some explicitly computable constant $c_1>0$. For this reason, we define 
\[
\delta_1 := a - \ol a, \quad \delta_2 := b - \ol b, \quad \delta_3 := c - \ol c, \quad \delta_4 := d - \ol d
\]
and note that $\ol{\delta_1} = \ol{\delta_2} = \ol{\delta_3} = \ol{\delta_4} = 0$. Moreover, 
\[
| \ol a \, \ol d - \ol c |, \, | \ol b \, \ol c - \ol d | \leq C(n_0, p_0, M_1, V)
\]
due to Young's inequality, $\ol{a^2}$, $\ol{b^2} \leq C(n_0, p_0, M_1, V)$ and $\ol{c^2}$, $\ol{d^2} \leq 1$.

We now define
\[
S := \big\{ x \in \Omega \, \big| \, |\delta_1| \leq 1 \land |\delta_2| \leq 1 \land |\delta_3| \leq 1 \land |\delta_4| \leq 1 \big\}
\]
and split the squares of the $L^2(\Omega)$-norm as
\begin{equation}
\label{eqsplit}
\| ad - c \|^2 = \int_S (ad - c)^2 \, dx + \int_{\Omega \backslash S} (ad - c)^2 \, dx
\end{equation}
and
\[
\| bc - d \|^2 = \int_S (bc - d)^2 \, dx + \int_{\Omega \backslash S} (bc - d)^2 \, dx,
\]
respectively. In order to estimate the first integral in \eqref{eqsplit} from below, we write
\[
ad = (\ol a + \delta_1)(\ol d + \delta_4) = \ol a \ol d + \ol a \delta_4 + \ol d \delta_1 + \delta_1 \delta_4, \quad c = \ol c + \delta_3.
\]
This yields
\vspace{-1ex}
\begin{multline*}
\int_S (ad - c)^2 \, dx = \int_S (\ol a \ol d - \ol c)^2 \, dx + 2 \int_S (\ol a \ol d - \ol c)(\ol a \delta_4 + \ol d \delta_1 + \delta_1 \delta_4 - \delta_3) \, dx + \int_S (\ol a \delta_4 + \ol d \delta_1 + \delta_1 \delta_4 - \delta_3)^2 \, dx \\
\geq \frac{1}{2} \int_S (\ol a \ol d - \ol c)^2 \, dx - \int_S (\ol a \delta_4 + \ol d \delta_1 + \delta_1 \delta_4 - \delta_3)^2 \, dx \geq \frac{1}{2} \int_S (\ol a \ol d - \ol c)^2 \, dx - C(n_0, p_0, M_1, V) \left( \ol{\delta_1^2} + \ol{\delta_3^2} + \ol{\delta_4^2} \right)
\end{multline*}
where we used Young's inequality $2xy \geq -x^2/2 - 2y^2$ for $x,y \in \mathbb{R}$ in the second step and the boundedness of $\delta_i$, $1 \leq i \leq 4$, in the last step. Similarly, we deduce
\[
\int_S (bc - d)^2 \, dx \geq \frac{1}{2} \int_S (\ol b \ol c - \ol d)^2 \, dx - C(n_0, p_0, M_1, V) \left( \ol{\delta_2^2} + \ol{\delta_3^2} + \ol{\delta_4^2} \right).
\]
The second integral in \eqref{eqsplit} is mainly estimated by deriving an upper bound for the measure of $\Omega \backslash S$. For all $i \in \{1,\dots,4\}$ we have
\[
\left| \left\{ \delta_i^2 > 1 \right\} \right| = \int_{\left\{ \delta_i^2 > 1 \right\}} 1 \, dx \leq \int_{\Omega}\delta_i^2 \, dx = \ol{\delta_i^2}
\]
and, hence,
\[
|\Omega \backslash S| \leq \sum_{i=1}^4 \left| \left\{ \delta_i^2 > 1 \right\} \right| \leq \ol{\delta_1^2} + \ol{\delta_2^2} + \ol{\delta_3^2} + \ol{\delta_4^2}.
\]
As a consequence of $| \ol a \, \ol d - \ol c | \leq C(n_0, p_0, M_1, V)$, we obtain
\[
\int_{\Omega \backslash S} ( \ol a \, \ol d - \ol c )^2 \, dx \leq C(n_0, p_0, M_1, V) \, |\{\Omega \backslash S\}| \leq C(n_0, p_0, M_1, V) \left( \ol{\delta_1^2} + \ol{\delta_2^2} + \ol{\delta_3^2} + \ol{\delta_4^2} \right).
\]
This implies
\[
\int_{\Omega \backslash S} (ad - c)^2 \, dx \geq 0 \geq \frac{1}{2} \int_{\Omega \backslash S} ( \ol a \, \ol d - \ol c )^2 \, dx - C(n_0, p_0, M_1, V) \left( \ol{\delta_1^2} + \ol{\delta_2^2} + \ol{\delta_3^2} + \ol{\delta_4^2} \right)
\]
and, analogously,
\[
\int_{\Omega \backslash S} (bc - d)^2 \, dx \geq 0 \geq \frac{1}{2} \int_{\Omega \backslash S} ( \ol b \, \ol c - \ol d )^2 \, dx - C(n_0, p_0, M_1, V) \left( \ol{\delta_1^2} + \ol{\delta_2^2} + \ol{\delta_3^2} + \ol{\delta_4^2} \right).
\]
Taking the sum of both contributions to \eqref{eqsplit}, we finally arrive at
\begin{equation}
\label{eqstep1finaladc}
\|ad-c\|^2 \geq \frac{1}{2} \big(\ol a \, \ol d - \ol c \big)^2 - c_1(n_0, p_0, M_1, V) \left( \ol{\delta_1^2} + \ol{\delta_2^2} + \ol{\delta_3^2} + \ol{\delta_4^2} \right)
\end{equation}
and
\begin{equation}
\label{eqstep1finalbcd}
\|bc-d\|^2 \geq \frac{1}{2} \big(\ol b \, \ol c - \ol d \big)^2 - c_1(n_0, p_0, M_1, V) \left( \ol{\delta_1^2} + \ol{\delta_2^2} + \ol{\delta_3^2} + \ol{\delta_4^2} \right).
\end{equation}

\paragraph{Step 2:} We introduce constants $\mu_i \geq -1$, $1 \leq i \leq 4$, such that
\[
\ol{a^2} = \nu_\infty^2 (1 + \mu_1)^2, \quad \ol{b^2} = \pi_\infty^2 (1 + \mu_2)^2, \quad \ol{c^2} = \nu_{tr,\infty}^2 (1 + \mu_3)^2, \quad \ol{d^2} = \nu_{tr,\infty}'^{\, 2} (1 + \mu_4)^2.
\]
We recall that Proposition \ref{propbounds} guarantees the uniform positivity and boundedness of $\nu_\infty$, $\pi_\infty$, $\nu_{tr,\infty}$ and $\nu_{tr,\infty}'$ for all $\varepsilon \in (0, \varepsilon_0]$ in terms of $\varepsilon_0, n_0, p_0, M$ and $V$. Therefore, the bounds $\ol{a^2}$, $\ol{b^2} \leq C(n_0, p_0, M_1, V)$ and $\ol{c^2}$, $\ol{d^2} \leq 1$ give rise to a constant $K(\varepsilon_0, n_0, p_0, M, M_1, V) > 0$ such that $\mu_i \in [-1,K]$ for all $1 \leq i \leq 4$ uniformly for $\varepsilon \in (0, \varepsilon_0]$.

We now want to derive a formula for $\ol a$ in terms of $\delta_1$ and $\mu_1$. Since $\ol{a^2} - \ol a^2 = \|a - \ol a\|^2 = \|\delta_1\|^2 = \ol{\delta_1^2}$, one finds 
\begin{equation}
\label{eqaverage}
\ol a = \sqrt{\ol{a^2}} - \frac{\ol{\delta_1^2}}{\sqrt{\ol{a^2}} + \ol a} = \nu_\infty (1 + \mu_1) - \frac{\ol{\delta_1^2}}{\sqrt{\ol{a^2}} + \ol a}
\end{equation}
and analogous expressions for $\ol b$, $\ol c$ and $\ol d$: 
\[
\ol b = \pi_\infty (1 + \mu_2) - \frac{\ol{\delta_2^2}}{\sqrt{\ol{b^2}} + \ol b}, \qquad \ol c = \nu_{tr,\infty} (1 + \mu_3) - \frac{\ol{\delta_3^2}}{\sqrt{\ol{c^2}} + \ol c}, \qquad \ol d = \nu_{tr,\infty}' (1 + \mu_4) - \frac{\ol{\delta_4^2}}{\sqrt{\ol{d^2}} + \ol d}.
\]
Furthermore,
\[
\Big( \sqrt{\ol{a^2}} - \nu_\infty \Big)^{2} = \nu_\infty^2 \mu_1^2, \qquad \Big( \sqrt{\ol{b^2}} - \pi_\infty \Big)^{2} = \pi_\infty^2 \mu_2^2
\]
and, similarly,
\vspace{-1ex}
\begin{multline*}
\| c - \nu_{tr,\infty} \|^2 = \ol{c^2} - 2 \ol c \nu_{tr,\infty} + \nu_{tr,\infty}^2 \\
= \nu_{tr,\infty}^2 (1 + \mu_3)^2 - 2 \nu_{tr,\infty}^2 (1 + \mu_3) + \frac{2 \nu_{tr,\infty} \ol{\delta_3^2}}{\sqrt{\ol{c^2}} + \ol c} + \nu_{tr,\infty}^2 = \nu_{tr,\infty}^2 \mu_3^2 + \frac{2 \nu_{tr,\infty}}{\sqrt{\ol{c^2}} + \ol c} \, \ol{\delta_3^2}.
\end{multline*}
One observes that the expansions above in terms of $\ol{\delta_i^2}$ are singular if, e.g., $\ol{a^2}$ is zero. We therefore distinguish the following two cases.

\paragraph{\underline{Case 1: $\ol{a^2} \geq \kappa^2 \land \ol{b^2} \geq \kappa^2 \land \ol{c^2} \geq \kappa^2 \land \ol{d^2} \geq \kappa^2$:}} The constant $\kappa > 0$ will be chosen according to the calculations in the other Case 2. Here, we have
\[
\frac{1}{\sqrt{\ol{a^2}} + \ol a}, \quad \frac{1}{\sqrt{\ol{b^2}} + \ol b}, \quad \frac{1}{\sqrt{\ol{c^2}} + \ol c}, \quad \frac{1}{\sqrt{\ol{d^2}} + \ol d} \ \leq \frac{1}{\kappa}
\]
and 
\[
\frac{\nu_{tr,\infty}'}{\sqrt{\ol{a^2}} + \ol a}, \quad \frac{\nu_{tr,\infty}}{\sqrt{\ol{b^2}} + \ol b}, \quad \frac{\pi_\infty}{\sqrt{\ol{c^2}} + \ol c}, \quad \frac{\nu_\infty}{\sqrt{\ol{d^2}} + \ol d} \ \leq C(\kappa, \varepsilon_0, n_0, p_0, M, V)
\]
for all $\varepsilon \in (0, \varepsilon_0]$ due to the bounds on $\nu_\infty$ and $\pi_\infty$ from Proposition \ref{propbounds}. Equation \eqref{eqaverage} further implies
\vspace{-1ex}
\begin{multline*}
(\ol a \ol d - \ol c)^2 = \Bigg( \nu_\infty \nu_{tr,\infty}'(1 + \mu_1)(1 + \mu_4) - \frac{\nu_\infty (1 + \mu_1)}{\sqrt{\ol{d^2}} + \ol d} \, \ol{\delta_4^2} - \frac{\nu_{tr,\infty}' (1 + \mu_4)}{\sqrt{\ol{a^2}} + \ol a} \, \ol{\delta_1^2} \\ 
+ \frac{1}{(\sqrt{\ol{a^2}} + \ol a)(\sqrt{\ol{d^2}} + \ol d)} \, \ol{\delta_1^2} \, \ol{\delta_4^2} - \nu_{tr,\infty} (1 + \mu_3) + \frac{\ol{\delta_3^2}}{\sqrt{\ol{c^2}} + \ol c} \Bigg)^2 \\
\geq \nu_{tr,\infty}^2 \big((1 + \mu_1)(1 + \mu_4) - (1 + \mu_3) \big)^2 - c_2(\kappa, \varepsilon_0, n_0, p_0, M, M_1, V) \big( \ol{\delta_1^2} + \ol{\delta_3^2} + \ol{\delta_4^2} \big)
\end{multline*}
with some explicit constant $c_2$ thanks to $\nu_\infty \nu_{tr,\infty}' = \nu_{tr,\infty}$ (compare Equation \eqref{eqequiformulae}) and $|\mu_i|, \ol{\delta_i^2} \leq c_1(\varepsilon_0, n_0, p_0, M, M_1, V)$. In a similar fashion using $\pi_\infty \nu_{tr,\infty} = \nu_{tr,\infty}'$, one obtains
\vspace{-1ex}
\begin{multline*}
(\ol b \ol c - \ol d)^2 = \Bigg( \pi_\infty \nu_{tr,\infty}(1 + \mu_2)(1 + \mu_3) - \frac{\pi_\infty (1 + \mu_2)}{\sqrt{\ol{c^2}} + \ol c} \, \ol{\delta_3^2} - \frac{\nu_{tr,\infty} (1 + \mu_3)}{\sqrt{\ol{b^2}} + \ol b} \, \ol{\delta_2^2} \\ 
+ \frac{1}{(\sqrt{\ol{b^2}} + \ol b)(\sqrt{\ol{c^2}} + \ol c)} \, \ol{\delta_2^2} \, \ol{\delta_3^2} - \nu_{tr,\infty}' (1 + \mu_4) + \frac{\ol{\delta_4^2}}{\sqrt{\ol{d^2}} + \ol d} \Bigg)^2 \\
\geq \nu_{tr,\infty}'^{\, 2} \big((1 + \mu_2)(1 + \mu_3) - (1 + \mu_4) \big)^2 - c_2(\kappa, \varepsilon_0, n_0, p_0, M, M_1, V) \big( \ol{\delta_2^2} + \ol{\delta_3^2} + \ol{\delta_4^2} \big).
\end{multline*}

In order to finish the proof, it is --- according to Step 1 --- sufficient to show that
\vspace{-1ex}
\begin{multline*}
\nu_\infty^2 \mu_1^2 + \pi_\infty^2 \mu_2^2 + \nu_{tr,\infty}^2 \mu_3^2 + \frac{2 \nu_{tr, \infty}}{\sqrt{\ol{c^2}} + \ol c} \, \ol{\delta_3^2} \leq C_1 \, \bigg( \| \nabla a \|^2 + \| \nabla b \|^2 \\
+ \frac{1}{2} \big(\ol a \, \ol d - \ol c \big)^2 + \frac{1}{2} \big(\ol b \, \ol c - \ol d \big)^2 - 2\,c_1(n_0, p_0, M_1, V) \left( \ol{\delta_1^2} + \ol{\delta_2^2} + \ol{\delta_3^2} + \ol{\delta_4^2} \right) \bigg) + C_2 \left( \ol{\delta_1^2} + \ol{\delta_2^2} + \ol{\delta_3^2} + \ol{\delta_4^2} \right)
\end{multline*}
for appropriate constants $C_1, C_2 > 0$. But due to Step 2 it is sufficient to show that for suitable constants $C_1, C_2 > 0$, 
\vspace{-1ex}
\begin{multline*}
\nu_\infty^2 \mu_1^2 + \pi_\infty^2 \mu_2^2 + \nu_{tr,\infty}^2 \mu_3^2 + \frac{2 \nu_{tr, \infty}}{\sqrt{\ol{c^2}} + \ol c} \, \ol{\delta_3^2} \leq C_1 \, \bigg( \| \nabla a \|^2 + \| \nabla b \|^2 \\
+ \frac{\nu_{tr,\infty}^2}{2} \big((1 + \mu_1)(1 + \mu_4) - (1 + \mu_3) \big)^2 + \frac{\nu_{tr,\infty}'^{\, 2}}{2} \big((1 + \mu_2)(1 + \mu_3) - (1 + \mu_4) \big)^2 \\
- c_3(\kappa, \varepsilon_0, n_0, p_0, M, M_1, V) \left( \ol{\delta_1^2} + \ol{\delta_2^2} + \ol{\delta_3^2} + \ol{\delta_4^2} \right) \bigg) + C_2 \left( \ol{\delta_1^2} + \ol{\delta_2^2} + \ol{\delta_3^2} + \ol{\delta_4^2} \right).
\end{multline*}
Collecting all $\ol{\delta_i^2}$-terms on the right hand side, one only has to prove that
\vspace{-1ex}
\begin{multline*}
\nu_\infty^2 \mu_1^2 + \pi_\infty^2 \mu_2^2 + \nu_{tr,\infty}^2 \mu_3^2 \leq C_1 \bigg( \| \nabla a \|^2 + \| \nabla b \|^2 \\
+ \nu_{tr,\infty}^2 \big((1 + \mu_1)(1 + \mu_4) - (1 + \mu_3) \big)^2 + \nu_{tr,\infty}'^{\, 2} \big((1 + \mu_2)(1 + \mu_3) - (1 + \mu_4) \big)^2 \Bigg) \\
+ \Big(C_2 - C(C_1, \kappa, \varepsilon_0, n_0, p_0, M, M_1, V) \Big) \left( \ol{\delta_1^2} + \ol{\delta_2^2} + \ol{\delta_3^2} + \ol{\delta_4^2} \right)
\end{multline*}
or, equivalently,
\vspace{-1ex}
\begin{multline}
\label{eqcase1}
\left( \sqrt{\ol{a^2}} - \nu_\infty \right)^2 + \left( \sqrt{\ol{b^2}} - \pi_\infty \right)^2 + \left( \sqrt{\ol{c^2}} - \nu_{tr,\infty} \right)^2 \\
\leq C_1 \left( \Big( \sqrt{\ol{a^2}} \sqrt{\ol{d^2}} - \sqrt{\ol{c^2}} \Big)^2 + \Big( \sqrt{\ol{b^2}} \sqrt{\ol{c^2}} - \sqrt{\ol{d^2}} \Big)^2 + \| \nabla a \|^2 + \| \nabla b \|^2 \right) \\
+ \Big( C_2 - C(C_1, \kappa, \varepsilon_0, n_0, p_0, M, M_1, V) \Big) \left( \ol{\delta_1^2} + \ol{\delta_2^2} + \ol{\delta_3^2} + \ol{\delta_4^2} \right).
\end{multline}

In order to verify \eqref{eqcase1}, we start with the estimate
\[
\left( \sqrt{\ol{a^2}} - \nu_\infty \right)^2 \leq 2 \left[ \left( \sqrt{\frac{\ol{\mu_n a^2}}{\ol{\mu_n}}} - \nu_\infty \right)^{\! 2} + \left( \sqrt{\frac{\ol{\mu_n a^2}}{\ol{\mu_n}}} - \sqrt{\ol{a^2}} \right)^{\! 2} \, \right]
\]
and a corresponding one involving $b$. The last term on the right hand side satisfies
\[
\left( \sqrt{\frac{\ol{\mu_n a^2}}{\ol{\mu_n}}} - \sqrt{\ol{a^2}} \right)^{\! 2} = \frac{\left( \frac{\ol{\mu_n a^2}}{\ol{\mu_n}} - \ol{a^2} \right)^{\! 2}}{\left( \sqrt{\frac{\ol{\mu_n a^2}}{\ol{\mu_n}}} + \sqrt{\ol{a^2}} \right)^{\! 2}} \leq \frac{1}{\kappa^2} \left( \frac{\ol{\mu_n a^2}}{\ol{\mu_n}} - \ol{a^2} \right)^{\! 2} \leq c \int_\Omega \left| \nabla \sqrt{a^2} \right|^2 dx = c \, \| \nabla a \|^2
\]
due to Lemma \ref{lemmaflux} with a constant $c(\kappa, n_0, p_0, M_1, V) > 0$. Similarly,
\[
\left( \sqrt{\ol{b^2}} - \pi_\infty \right)^2 \leq c(\kappa, n_0, p_0, M_1, V) \left[ \left( \sqrt{\frac{\ol{\mu_p b^2}}{\ol{\mu_p}}} - \pi_\infty \right)^{\! 2} + \| \nabla b \|^2 \right].
\]
Proposition \ref{propeedodeconst} (with $a^2$, $b^2$, $c^2$ and $d^2$ therein replaced by $\ol{\mu_n a^2}/\ol{\mu_n}$, $\ol{\mu_p b^2}/\ol{\mu_p}$, $\ol{c^2}$ and $\ol{d^2}$) tells us that there exists an explicitly computable constant $C(\varepsilon_0, n_0, p_0, M, M_1, V)>0$ such that
\vspace{-1ex}
\begin{multline}
\label{eqapplypropode}
\left( \sqrt{\frac{\ol{\mu_n a^2}}{\ol{\mu_n}}} - \nu_\infty \right)^2 + \left( \sqrt{\frac{\ol{\mu_p b^2}}{\ol{\mu_p}}} - \pi_\infty \right)^2 + \left( \sqrt{\ol{c^2}} - \nu_{tr,\infty} \right)^2 \\
\leq C \left( \left( \sqrt{\frac{\ol{\mu_n a^2}}{\ol{\mu_n}}} \sqrt{\ol{d^2}} - \sqrt{\ol{c^2}} \right)^2 + \left( \sqrt{\frac{\ol{\mu_p b^2}}{\ol{\mu_p}}} \sqrt{\ol{c^2}} - \sqrt{\ol{d^2}} \right)^2 \right)
\end{multline}
for all $\varepsilon \in (0, \varepsilon_0]$. Using an analog expansion as before, we further deduce with $\ol{d^2} \leq 1$,
\vspace{-1ex}
\begin{multline*}
\left( \sqrt{\frac{\ol{\mu_n a^2}}{\ol{\mu_n}}} \sqrt{\ol{d^2}} - \sqrt{\ol{c^2}} \right)^2 = \left( \sqrt{\ol{a^2}} \sqrt{\ol{d^2}} - \sqrt{\ol{c^2}} + \left( \sqrt{\frac{\ol{\mu_n a^2}}{\ol{\mu_n}}} - \sqrt{\ol{a^2}} \right) \sqrt{\ol{d^2}} \right)^2 \\ 
\leq c(\kappa, n_0, p_0, M_1, V) \left( \Big( \sqrt{\ol{a^2}} \sqrt{\ol{d^2}} - \sqrt{\ol{c^2}} \Big)^2 + \| \nabla a \|^2 \right).
\end{multline*}
As a corresponding estimate holds true also for the other expression on the right hand side of \eqref{eqapplypropode}, we have shown that there exists a constant $C_1(\kappa, \varepsilon_0, n_0, p_0, M, M_1, V)>0$ independent of $\varepsilon$ for $\varepsilon \in (0, \varepsilon_0]$ such that
\vspace{-1ex}
\begin{multline*}
\left( \sqrt{\ol{a^2}} - \nu_\infty \right)^2 + \left( \sqrt{\ol{b^2}} - \pi_\infty \right)^2 + \left( \sqrt{\ol{c^2}} - \nu_{tr,\infty} \right)^2 \\
\leq C_1 \left( \Big( \sqrt{\ol{a^2}} \sqrt{\ol{d^2}} - \sqrt{\ol{c^2}} \Big)^2 + \Big( \sqrt{\ol{b^2}} \sqrt{\ol{c^2}} - \sqrt{\ol{d^2}} \Big)^2 + \| \nabla a \|^2 + \| \nabla b \|^2 \right).
\end{multline*}
Choosing $C_2>0$ now sufficiently large, Equation \eqref{eqcase1} holds true.

\paragraph{\underline{Case 2: $\ol{a^2} < \kappa^2 \lor \ol{b^2} < \kappa^2 \lor \ol{c^2} < \kappa^2 \lor \ol{d^2} < \kappa^2$:}} In this case, we will not need Proposition \ref{propeedodeconst} and we shall directly prove Equation \eqref{eqeedodefunc} employing only the result of Step 1. In fact, for $\kappa$ chosen sufficiently small, the states considered in Case 2 are necessarily bounded away from the equilibrium and the following arguments show that consequentially the right hand side of \eqref{eqeedodefunc} is also bounded away from zero, which allows to close the estimate \eqref{eqeedodefunc}. As a result of the hypotheses $\ol{a^2}, \, \ol{b^2} \leq C(n_0, p_0, M_1, V)$ and $\ol{c^2}, \, \ol{d^2} \leq 1$, we use Young's inequality to estimate $\ol a, \ol b, \ol c, \ol d \leq c(n_0, p_0, M_1, V)$ and
\[
\Big( \sqrt{\ol{a^2}} - \nu_\infty \Big)^2 + \Big( \sqrt{\ol{b^2}} - \pi_\infty \Big)^2 + \| c - \nu_{tr,\infty} \|^2 \leq C(\varepsilon_0, n_0, p_0, M, M_1, V)
\]
with $C > 0$ uniformly for $\varepsilon \in (0, \varepsilon_0]$. We stress that the subsequent cases are not necessarily exclusive.

\paragraph{\underline{Case 2.1: $\ol{c^2} < \kappa^2$:}} First, $\ol c = \sqrt{\ol c^2} \leq \sqrt{\ol{c^2}} < \kappa$. This yields 
\vspace{-1ex}
\begin{multline*}
\ol{d^2} = 1 - \ol{c^2} > 1 - \kappa^2 \Rightarrow \ol d^2 = \ol{d^2} - \ol{\delta_4^2} > 1 - \ol{\delta_4^2} - \kappa^2 \Rightarrow \\
\big( \ol b \, \ol c - \ol d \big)^2 \geq \ol d^2 - 2 \ol b \, \ol c \, \ol d > 1 - \ol{\delta_4^2} - \kappa^2 - 2 \ol b \, \ol d \, \kappa \geq 1 - \ol{\delta_4^2} - \kappa^2 - C(n_0, p_0, M_1, V) \, \kappa.
\end{multline*}
For $\kappa > 0$ sufficiently small, we then have $0 < 1 - C(n_0, p_0, M_1, V) \kappa - \kappa^2 \leq ( \ol b \, \ol c - \ol d )^2 + \ol{\delta_4^2}$ and, hence,
\vspace{-1ex}
\begin{multline*}
\Big( \sqrt{\ol{a^2}} - \nu_\infty \Big)^2 + \Big( \sqrt{\ol{b^2}} - \pi_\infty \Big)^2 + \| c - \nu_{tr,\infty} \|^2 \leq C(\varepsilon_0, n_0, p_0, M, M_1, V) \leq \\
K( \ol b \, \ol c - \ol d )^2 + K \ol{\delta_4^2} \leq 2 K \|bc - d\|^2 + (2 K c_1(n_0, p_0, M_1, V) + K) \left( \ol{\delta_1^2} + \ol{\delta_2^2} + \ol{\delta_3^2} + \ol{\delta_4^2} \right)
\end{multline*}
by \eqref{eqstep1finalbcd} with some $K(\kappa, \varepsilon_0, n_0, p_0, M, M_1, V) > 0$. Let us call the parameter $\kappa$ from above $\kappa_c$.

\paragraph{\underline{Case 2.2: $\ol{d^2} < \kappa^2$:}} Now $\ol d = \sqrt{\ol d^2} \leq \sqrt{\ol{d^2}} < \kappa$ and
\vspace{-1ex}
\begin{multline*}
\ol{c^2} = 1 - \ol{d^2} > 1 - \kappa^2 \Rightarrow \ol c^2 = \ol{c^2} - \ol{\delta_3^2} > 1 - \ol{\delta_3^2} - \kappa^2 \Rightarrow \\
\big( \ol a \, \ol d - \ol c \big)^2 \geq \ol c^2 - 2 \ol a \, \ol c \, \ol d > 1 - \ol{\delta_3^2} - \kappa^2 - 2 \ol a \, \ol c \, \kappa \geq 1 - \ol{\delta_3^2} - \kappa^2 - C(n_0, p_0, M_1, V) \, \kappa.
\end{multline*}
Again $\kappa > 0$ sufficiently small gives rise to $0 < 1 - C(n_0, p_0, M_1, V) \kappa - \kappa^2 \leq ( \ol a \, \ol d - \ol c )^2 + \ol{\delta_3^2}$ and
\vspace{-1ex}
\begin{multline*}
\Big( \sqrt{\ol{a^2}} - \nu_\infty \Big)^2 + \Big( \sqrt{\ol{b^2}} - \pi_\infty \Big)^2 + \| c - \nu_{tr,\infty} \|^2 \leq C(\varepsilon_0, n_0, p_0, M, M_1, V) \leq \\
K( \ol a \, \ol d - \ol c )^2 + K \ol{\delta_3^2} \leq 2 K \|ad - c\|^2 + (2 K c_1(n_0, p_0, M_1, V) + K) \left( \ol{\delta_1^2} + \ol{\delta_2^2} + \ol{\delta_3^2} + \ol{\delta_4^2} \right)
\end{multline*}
for some constant $K(\kappa, \varepsilon_0, n_0, p_0, M, M_1, V) > 0$ using \eqref{eqstep1finaladc}. This $\kappa > 0$ shall be denoted by $\kappa_d$.

\paragraph{\underline{Case 2.3: $\ol{a^2} < \kappa^2$:}} We first notice that $\ol a < \kappa$ and $2 \, \ol c \, \ol d \leq \ol c^2 + \ol d^2 \leq \ol{c^2} + \ol{d^2} = 1$. Now, we choose $\kappa_a := \kappa > 0$ sufficiently small such that $2 \kappa < \kappa_c^2$. Then, if $\ol{c^2} < 2 \kappa$, we have $\ol{c^2} < \kappa_c^2$, and the estimate
\[
\Big( \sqrt{\ol{a^2}} - \nu_\infty \Big)^2 + \Big( \sqrt{\ol{b^2}} - \pi_\infty \Big)^2 + \| c - \nu_{tr,\infty} \|^2 \leq 2 K \|bc - d\|^2 + (2 K c_1(n_0, p_0, M_1, V) + K) \left( \ol{\delta_1^2} + \ol{\delta_2^2} + \ol{\delta_3^2} + \ol{\delta_4^2} \right)
\]
with $K(\kappa, \varepsilon_0, n_0, p_0, M, M_1, V) > 0$ immediately follows from Case 2.1. And if $\ol{c^2} \geq 2 \kappa$, then
\[
\ol c^2 = \ol{c^2} - \ol{\delta_3^2} \geq 2 \kappa - \ol{\delta_3^2} \Rightarrow \big( \ol a \, \ol d - \ol c \big)^2 \geq \ol c^2 - 2 \, \ol a \, \ol c \, \ol d \geq 2 \kappa - \ol{\delta_3^2} -  \kappa = \kappa - \ol{\delta_3^2}.
\]
Consequently, $0 < \kappa \leq ( \ol a \, \ol d - \ol c )^2 + \ol{\delta_3^2}$ and
\vspace{-1ex}
\begin{multline*}
\Big( \sqrt{\ol{a^2}} - \nu_\infty \Big)^2 + \Big( \sqrt{\ol{b^2}} - \pi_\infty \Big)^2 + \| c - \nu_{tr,\infty} \|^2 \leq C(\varepsilon_0, n_0, p_0, M, M_1, V) \leq \\
K( \ol a \, \ol d - \ol c )^2 + K \ol{\delta_3^2} \leq 2 K \|ad - c\|^2 + (2 K c_1(n_0, p_0, M_1, V) + K) \left( \ol{\delta_1^2} + \ol{\delta_2^2} + \ol{\delta_3^2} + \ol{\delta_4^2} \right)
\end{multline*}
due to \eqref{eqstep1finaladc} with a constant $K(\kappa, \varepsilon_0, n_0, p_0, M, M_1, V) > 0$.

\paragraph{\underline{Case 2.4: $\ol{b^2} < \kappa^2$:}} Again $\ol b < \kappa$ and $2 \, \ol c \, \ol d \leq \ol c^2 + \ol d^2 \leq \ol{c^2} + \ol{d^2} = 1$. Here, we choose $\kappa_b := \kappa > 0$ sufficiently small such that $2\kappa < \kappa_d^2$. If $\ol{d^2} < 2\kappa$, we have $\ol{d^2} < \kappa_d^2$, and due to Case 2.2 there exists some $K(\kappa, \varepsilon_0, n_0, p_0, M, M_1, V) > 0$ such that
\[
\Big( \sqrt{\ol{a^2}} - \nu_\infty \Big)^2 + \Big( \sqrt{\ol{b^2}} - \pi_\infty \Big)^2 + \| c - \nu_{tr,\infty} \|^2 \leq 2 K \|ad - c\|^2 + (2 K c_1(n_0, p_0, M_1, V) + K) \Bigl( \ol{\delta_1^2} + \ol{\delta_2^2} + \ol{\delta_3^2} + \ol{\delta_4^2} \Bigr).
\]
If $\ol{d^2} \geq 2 \kappa$, then
\[
\ol d^2 = \ol{d^2} - \ol{\delta_4^2} \geq 2 \kappa - \ol{\delta_4^2} \Rightarrow \big( \ol b \, \ol c - \ol d \big)^2 \geq \ol d^2 - 2 \, \ol b \, \ol c \, \ol d \geq 2 \kappa - \ol{\delta_4^2} - \kappa = \kappa - \ol{\delta_4^2}.
\]
This implies $0 < \kappa \leq ( \ol b \, \ol c - \ol d )^2 + \ol{\delta_4^2}$ and
\vspace{-1ex}
\begin{multline*}
\Big( \sqrt{\ol{a^2}} - \nu_\infty \Big)^2 + \Big( \sqrt{\ol{b^2}} - \pi_\infty \Big)^2 + \| c - \nu_{tr,\infty} \|^2 \leq C(\varepsilon_0, n_0, p_0, M, M_1, V) \leq \\
K( \ol b \, \ol c - \ol d )^2 + K \ol{\delta_4^2} \leq 2 K \|bc - d\|^2 + (2 K c_1(n_0, p_0, M_1, V) + K) \left( \ol{\delta_1^2} + \ol{\delta_2^2} + \ol{\delta_3^2} + \ol{\delta_4^2} \right)
\end{multline*}
with $K(\kappa, \varepsilon_0, n_0, p_0, M, M_1, V) > 0$ employing \eqref{eqstep1finalbcd}.

All arguments within Step 2 remain valid, if we finally set $\kappa := \min(\kappa_a, \kappa_b, \kappa_c, \kappa_d)$. We also observe that the constants $K > 0$ above are independent of $\varepsilon \in (0, \varepsilon_0]$. And since $\kappa$ only depends on $n_0$, $p_0$, $M_1$ and $V$, we may skip the explicit dependence of $C_2$ on $\kappa$ at the end of Case 1. This finishes the proof.
\end{proof}
\end{proposition}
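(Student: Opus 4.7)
The plan is to reduce this inhomogeneous statement to the homogeneous Proposition \ref{propeedodeconst} by decomposing each function into its spatial average and a fluctuation. The four fluctuation norms $\|a-\ol a\|^2,\ldots,\|d-\ol d\|^2$ on the right hand side of \eqref{eqeedodefunc} are designed precisely to absorb the errors introduced by this decomposition, while the gradient terms $\|\nabla a\|^2,\|\nabla b\|^2$ will reconcile the weighted conservation law $n_0\ol{\mu_n a^2} - p_0\ol{\mu_p b^2} + \varepsilon \ol{c^2} = M$ (which is what we are given) with the \emph{unweighted} conservation law required by Proposition \ref{propeedodeconst}.

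Writing $\delta_i$ for the four fluctuations ($\delta_1 = a - \ol a$, etc.), the first task is to bound from below the reaction norms $\|ad - c\|^2$ and $\|bc - d\|^2$ in terms of their homogeneous counterparts $(\ol a\, \ol d - \ol c)^2$ and $(\ol b\, \ol c - \ol d)^2$. Expanding $ad - c = (\ol a\,\ol d - \ol c) + \ol a\,\delta_4 + \ol d\,\delta_1 + \delta_1 \delta_4 - \delta_3$, the cross terms linear in $\delta_i$ are handled by Young's inequality, yielding a prefactor $\tfrac{1}{2}$ on $(\ol a\,\ol d - \ol c)^2$ and an error $C\sum_i\|\delta_i\|^2$ (the constant $C$ is bounded because $\ol a, \ol b, \ol c, \ol d$ are uniformly bounded by the hypotheses). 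The quartic term $\delta_1 \delta_4$ is the one source of trouble: I would split $\Omega = S \cup (\Omega \setminus S)$ with $S := \{x : |\delta_i(x)| \leq 1 \text{ for all } i\}$. On $S$ the integrand is bounded and Young's inequality closes the estimate; on $\Omega \setminus S$, Chebyshev's inequality gives $|\Omega \setminus S| \leq \sum_i \|\delta_i\|^2$, so the discarded contribution $(\ol a\,\ol d - \ol c)^2 \, |\Omega \setminus S|$ is absorbed into the fluctuation terms.

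Next, I would apply Proposition \ref{propeedodeconst} to the natural constants $\tilde a := \sqrt{\ol{\mu_n a^2}/\ol{\mu_n}}$, $\tilde b := \sqrt{\ol{\mu_p b^2}/\ol{\mu_p}}$, $\tilde c := \sqrt{\ol{c^2}}$, $\tilde d := \sqrt{\ol{d^2}}$, which \emph{exactly} satisfy the required conservation laws. Lemma \ref{lemmaflux} bounds $(\ol{\mu_n a^2}/\ol{\mu_n} - \ol{a^2})^2$ by a multiple of $\|\nabla a\|^2$, permitting the replacement $\tilde a \leftrightarrow \sqrt{\ol{a^2}}$ at the cost of a $\|\nabla a\|^2$-error (and similarly for $b$); this is the origin of the gradient terms in \eqref{eqeedodefunc}. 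The conversions between averages and square-roots of averages needed on both sides rely on the elementary identity $\sqrt{\ol{f^2}} - \ol f = \|\delta\|^2/(\sqrt{\ol{f^2}} + \ol f)$; using it, for example, $\|c-\nu_{tr,\infty}\|^2 = (\sqrt{\ol{c^2}} - \nu_{tr,\infty})^2 + 2\nu_{tr,\infty}\|\delta_3\|^2/(\sqrt{\ol{c^2}}+\ol c)$, and analogous relations translate $(\ol a\,\ol d - \ol c)^2$ into $(\sqrt{\ol{a^2}}\sqrt{\ol{d^2}} - \sqrt{\ol{c^2}})^2$ up to fluctuation/gradient errors, leaving exactly the form on which Proposition \ref{propeedodeconst} may be invoked.

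The main obstacle is that the reciprocals $1/(\sqrt{\ol{f^2}} + \ol f)$ and the constants arising from Proposition \ref{propeedodeconst} become singular when one of the averaged squares collapses to zero. I would handle this by a dichotomy on a small parameter $\kappa>0$: if all four of $\ol{a^2}, \ol{b^2}, \ol{c^2}, \ol{d^2}$ exceed $\kappa^2$, the reciprocals are controlled by $1/\kappa$ and the previous steps close with explicit constants depending only on $\kappa,\varepsilon_0,n_0,p_0,M,M_1,V$. In the complementary degenerate regime, the conservation laws force the state far from equilibrium: for instance, if $\ol{c^2} < \kappa^2$, the constraint $\ol{c^2}+\ol{d^2}=1$ gives $\ol d$ close to $1$ while $\ol b\,\ol c$ is small (using the $L^1$-bound on $\ol b$), producing a uniform positive lower bound for $(\ol b\,\ol c - \ol d)^2$ modulo $\|\delta_4\|^2$; combined with the uniform upper bound on the left hand side of \eqref{eqeedodefunc} provided by the hypotheses, choosing $\kappa$ sufficiently small (depending only on $n_0,p_0,M_1,V$) makes the inequality trivial in that sub-case. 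The three symmetric sub-cases $\ol{d^2}, \ol{a^2}, \ol{b^2} < \kappa^2$ are treated analogously, in some subcases reducing via the conservation laws to one of the already handled scenarios. Since $\kappa$ depends only on the \emph{data} of the problem (not on $\varepsilon$), this yields the claimed $\varepsilon$-uniform constant.
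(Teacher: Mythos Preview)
Your proposal is correct and follows essentially the same approach as the paper's proof: the same Step~1 reduction to averages via the set $S=\{|\delta_i|\le 1\}$ with Chebyshev on its complement, the same application of Proposition~\ref{propeedodeconst} to the weighted averages $\sqrt{\ol{\mu_n a^2}/\ol{\mu_n}}$ etc.\ combined with Lemma~\ref{lemmaflux} to produce the gradient terms, the same identity $\sqrt{\ol{f^2}}-\ol f=\|\delta\|^2/(\sqrt{\ol{f^2}}+\ol f)$, and the same $\kappa$-dichotomy with the degenerate sub-cases handled exactly as you describe.
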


We already pointed out that $\|ad - c\|^2$ and $\|bc - d\|^2$ can be controlled by the reaction-terms of the entropy production, if we replace $a$, $b$, $c$, $d$ by $\sqrt{n/(n_0 \mu_n)}$, $\sqrt{p/(p_0 \mu_p)}$, $\sqrt{n_{tr}}$ and $\sqrt{n_{tr}'}$ (see the proof of Theorem \ref{theoremeed} in Section \ref{sectionconvergence} for details). In this proof, also $\|\nabla a\|^2$, $\|\nabla b\|^2$, $\|a - \ol a\|^2$ and $\|b - \ol b\|^2$ may be bounded by the entropy production. However, $\|c - \ol c\|^2$ and $\|d - \ol d\|^2$ may not be estimated with the help of  Poincar\'e's inequality since this would yield terms involving $\nabla n_{tr}$, which do not appear in the entropy production. 

Instead, we are able to derive the following estimates for $\|c - \ol c\|^2$ and $\|d - \ol d\|^2$, which describe an indirect diffusion transfer from $c$ to $b$ and from $d$ to $a$, respectively: Even if $c$ and $d$ are lacking an explicit diffusion term in the dynamical equations, they do experience indirect diffusive effects thanks to the reversible reaction dynamics and the diffusivity of $a$ and $b$. This is the interpretation of the following functional inequalities.

\begin{proposition}[Indirect Diffusion Transfer]
\label{propdifftrans}
Let $a,b,c,d:\Omega \rightarrow \mathbb{R}$ be non-negative functions such that 
\[
c^2 + d^2 = 1
\]
holds true a.e. in $\Omega$. Then, 
\[
\| c - \ol c \|^2 \leq 4 \big( \| b c - d \|^2 + \| b - \ol b \|^2 \big) \mbox{\qquad and \qquad} \| d - \ol d \|^2 \leq 4 \big( \| a d - c \|^2 + \| a - \ol a \|^2 \big).
\]

\begin{proof}
We only verify the second inequality; the first one can be checked along the same lines. First, we notice that 
\begin{equation}
\label{eqreacvariation}
\| \ol a d - c \| = \| a d - c + (\ol a - a) d \| \leq \| a d - c \| + \| a - \ol a \|
\end{equation}
because of the bound $0 \leq d \leq 1$. Besides, we deduce
\[
\| \ol a^2 d^2 - c^2 \| = \| (\ol a d + c)(\ol a d - c) \| \leq (1 + \ol a) \| \ol a d - c \|
\]
employing $0 \leq c,d \leq 1$. For the subsequent estimates, we need two auxiliary inequalities: For every function $f: \Omega \rightarrow \mathbb{R}$ and all $\lambda \in \mathbb{R}$, we have
\vspace{-1ex}
\begin{align}
\| f - \ol f \|^2 = \int_\Omega (f - \lambda + \lambda - \ol f)^2 dx &= \int_\Omega \Big( (f - \lambda)^2 - 2 (f - \lambda) (\ol f - \lambda) + (\ol f - \lambda)^2 \Big) dx \nonumber\\
&= \int_\Omega (f - \lambda)^2 dx - (\ol f - \lambda)^2 \leq \| f - \lambda \|^2.\label{eqaverageapprox}
\end{align}
And for all $x \geq 0$, one has
\[
\frac{1+x}{\sqrt{1 + x^2}} = \frac{\sqrt{1 + 2x + x^2}}{\sqrt{1 + x^2}} \leq \frac{\sqrt{2 (1 + x^2)}}{\sqrt{1 + x^2}} = \sqrt{2}.
\]
Since $c^2 + d^2 = 1$, we obtain
\vspace{-1ex}
\begin{multline*}
\| \ol a^2 d^2 - c^2 \| = \| \ol a^2 d^2 + d^2 - 1 \| = \| (1 + \ol a^2) d^2 - 1 \| = \big\| \big( \sqrt{1 + \ol a^2} \, d + 1 \big) \big( \sqrt{1 + \ol a^2} \, d - 1 \big) \big\| \\
\geq \big\| \sqrt{1 + \ol a^2} \, d - 1 \big\| = \sqrt{1 + \ol a^2} \, \bigg\| d - \frac{1}{\sqrt{1 + \ol a^2}} \bigg\| \geq \sqrt{1 + \ol a^2} \| d - \ol d \|.
\end{multline*}
where we applied \eqref{eqaverageapprox} in the last step. Consequently,
\[
\| d - \ol d \| \leq \frac{1}{\sqrt{1 + \ol a^2}} \| \ol a^2 d^2 - c^2 \| \leq \frac{1 + \ol a}{\sqrt{1 + \ol a^2}} \| \ol a d - c \| \leq \sqrt{2} \, \| \ol a d - c \|
\]
and 
\[
\| d - \ol d \|^2 \leq 2 \| \ol a d - c \|^2 \leq 4 ( \| a d - c \|^2 + \| a - \ol a \|^2 )
\]
using \eqref{eqreacvariation}.
\end{proof}
\end{proposition}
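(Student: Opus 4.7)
The plan is to use the algebraic constraint $c^2+d^2=1$ to eliminate $c$ and expose the deviation of $d$ from an explicit constant, then invoke that $\ol d$ is the $L^2$-best constant approximation to $d$. I would only prove the second inequality in detail; the first follows by the symmetric relabelling $(a,d)\leftrightarrow(b,c)$, under which the hypothesis $c^2+d^2=1$ is preserved.

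First I would replace $a$ by its spatial average on the reaction term. Since $c^2+d^2=1$ together with $c,d\geq 0$ forces $c,d\in[0,1]$ pointwise, the triangle inequality immediately gives
$$\|\ol a\,d - c\| \;\leq\; \|ad - c\| + \|(a-\ol a)\,d\| \;\leq\; \|ad-c\| + \|a-\ol a\|.$$
Next, using $c^2 = 1-d^2$, I rewrite $\ol a^2 d^2 - c^2 = (1+\ol a^2)\,d^2 - 1$ and factor each side:
$$(\ol a\,d - c)(\ol a\,d + c) \;=\; \bigl(\sqrt{1+\ol a^2}\,d - 1\bigr)\bigl(\sqrt{1+\ol a^2}\,d + 1\bigr).$$
The factor $\sqrt{1+\ol a^2}\,d + 1 \geq 1$ on the right bounds the $L^2$-norm of the common product from below by $\|\sqrt{1+\ol a^2}\,d - 1\|$, whereas the factor $\ol a\,d + c \leq \ol a + 1$ on the left bounds it from above by $(1+\ol a)\,\|\ol a\,d - c\|$. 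Combining these two bounds produces
$$\sqrt{1+\ol a^2}\,\|d - 1/\sqrt{1+\ol a^2}\| \;\leq\; (1+\ol a)\,\|\ol a\,d - c\|.$$

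Finally, since $\ol d$ minimises $\lambda\mapsto\|d-\lambda\|$ over real constants $\lambda$, I can replace $1/\sqrt{1+\ol a^2}$ by $\ol d$ on the left at the cost of nothing: $\|d-\ol d\|\leq\|d-1/\sqrt{1+\ol a^2}\|$. The elementary estimate $(1+x)^2\leq 2(1+x^2)$ applied to $x=\ol a$ then gives $(1+\ol a)/\sqrt{1+\ol a^2}\leq\sqrt 2$, hence $\|d-\ol d\|\leq\sqrt 2\,\|\ol a\,d - c\|$. Squaring and feeding in the first step via $(x+y)^2\leq 2(x^2+y^2)$ yields precisely $\|d-\ol d\|^2\leq 4\bigl(\|ad-c\|^2+\|a-\ol a\|^2\bigr)$, where the constant $4=2\cdot 2$ is the product of the two factors of $2$ introduced at these two squaring steps. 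The main (and really only) content is recognising that the constraint $c^2+d^2=1$ is exactly what lets one eliminate $c$ and make $d$'s distance to an explicit constant appear from the reaction term; once this is seen, the remaining estimates are routine.
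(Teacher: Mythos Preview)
Your proof is correct and follows essentially the same route as the paper's: replace $a$ by $\ol a$ at the cost of $\|a-\ol a\|$, factor both $\ol a^2 d^2 - c^2$ and $(1+\ol a^2)d^2-1$ to sandwich the common quantity, use the best-constant property of $\ol d$ to pass from $1/\sqrt{1+\ol a^2}$ to $\ol d$, and finish with $(1+\ol a)/\sqrt{1+\ol a^2}\leq\sqrt 2$. The only difference is cosmetic---you fuse the two factorizations into a single displayed identity, whereas the paper treats the upper and lower estimates on $\|\ol a^2 d^2 - c^2\|$ in separate steps.
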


\section{EEP-inequality and convergence to the equilibrium}
\label{sectionconvergence}
We are now prepared to prove Theorem \ref{theoremeed}.

\begin{proof}[\bf Proof of Theorem \ref{theoremeed}]
Let $(n,p,n_{tr}) \in L^1(\Omega)^3$ be non-negative functions satisfying $n_{tr} \leq 1$, 
the conservation law $\ol n - \ol p + \varepsilon \ol{n_{tr}} = M$ and the $L^1$-bound $\ol n, \ol p \leq M_1$.
Keeping in mind that $\nu_\infty = \sqrt{n_\ast/n_0}$ and $\pi_\infty = \sqrt{p_\ast/p_0}$ (cf. Notation \ref{notabstract}), Proposition \ref{propeedineq} guarantees that there exists a positive constant $C_1(\gamma, \Gamma, M_1) > 0$ such that
\vspace{-1ex}
\begin{multline}
\label{eqprelimeedineq}
E(n,p,n_{tr}) - E(n_\infty, p_\infty, n_{tr,\infty}) \leq C_1 \, \Bigg( \int_\Omega \left( \frac{|J_n|^2}{n} + \frac{|J_p|^2}{p} \right) dx \\
+ n_0 \bigg( \sqrt{\ol{\frac{n}{n_0 \mu_n}}} - \nu_\infty \bigg)^2 + p_0 \bigg( \sqrt{\ol{\frac{p}{p_0 \mu_p}}} - \pi_\infty \bigg)^2 + \varepsilon \int_\Omega \big( \sqrt{n_{tr}} - \sqrt{n_{tr,\infty}} \big)^2 \, dx \Bigg).
\end{multline}

Next, we have to bound the second line of \eqref{eqprelimeedineq} in terms of the entropy production. To this end, we apply Proposition \ref{propeedpde} with the choices $a := \sqrt{n/(n_0 \mu_n)}$, $b := \sqrt{p/(p_0 \mu_p)}$, $c := \sqrt{n_{tr}}$ and $d := \sqrt{n_{tr}'}$ (as always $n_{tr}' = 1 - n_{tr}$). 
The hypotheses of this Proposition are fulfilled as a consequence of the conservation law $\ol n - \ol p + \varepsilon \ol{n_{tr}} = M$ and the $L^1$-bound $\ol n, \ol p \leq M_1$. As a result, we obtain
\vspace{-1ex}
\begin{multline*}
\left( \sqrt{\ol{\frac{n}{n_0 \mu_n}}} - \nu_\infty \right)^2 + \left( \sqrt{\ol{\frac{p}{p_0 \mu_p}}} - \pi_\infty \right)^2 + \| \sqrt{n_{tr}} - \sqrt{n_{tr,\infty}} \|^2 \\ 
\leq C_2 \Bigg( \left\| \sqrt{\frac{n n_{tr}'}{n_0 \mu_n}} - \sqrt{n_{tr}} \right\|^2 + \left\| \sqrt{\frac{p n_{tr}}{p_0 \mu_p}} - \sqrt{n_{tr}'} \right\|^2 + \left\| \nabla \sqrt{\frac{n}{n_0 \mu_n}} \right\|^2 + \left\| \nabla \sqrt{\frac{p}{p_0 \mu_p}} \right\|^2 \qquad\\
+ \left\| \sqrt{\frac{n}{n_0 \mu_n}} - \ol{\sqrt{\frac{n}{n_0 \mu_n}}} \right\|^2 + \left\| \sqrt{\frac{p}{p_0 \mu_p}} - \ol{\sqrt{\frac{p}{p_0 \mu_p}}} \right\|^2 + \big\| \sqrt{n_{tr}} - \ol{\sqrt{n_{tr}}} \big\|^2 + \big\| \sqrt{n_{tr}'} - \ol{\sqrt{n_{tr}'}} \big\|^2 \Bigg)
\end{multline*}
for all $\varepsilon \in (0, \varepsilon_0]$ with a constant $C_2(\varepsilon_0, n_0, p_0, M, M_1, V) > 0$. Thanks to Poincar\'e's inequality, we are able to bound the last two terms in the second line and the first two terms in the third line from above:
\[
\left\| \sqrt{\frac{n}{n_0 \mu_n}} - \ol{\sqrt{\frac{n}{n_0 \mu_n}}} \right\|^2 \leq C_P \left\| \nabla \sqrt{\frac{n}{n_0 \mu_n}} \right\|^2 = C_P \int_{\Omega} \left| \frac{1}{2} \sqrt{\frac{\mu_n}{n_0 n}} \nabla \Big( \frac{n}{\mu_n} \Big) \right|^2 dx = \frac{C_P}{4 n_0 \inf_\Omega \mu_n} \int_{\Omega} \frac{|J_n|^2}{n} \, dx
\]
and
\[
\left\| \sqrt{\frac{p}{p_0 \mu_p}} - \ol{\sqrt{\frac{p}{p_0 \mu_p}}} \right\|^2 \leq C_P \left\| \nabla \sqrt{\frac{p}{p_0 \mu_p}} \right\|^2 \leq \frac{C_P}{4 p_0 \inf_\Omega \mu_p} \int_{\Omega} \frac{|J_p|^2}{p} \, dx.
\]
Moreover, the elementary inequality $(\sqrt x - 1)^2 \leq (x-1) \ln(x)$ for $x>0$ gives rise to
\vspace{-1ex}
\begin{multline*}
\left\| \sqrt{\frac{n n_{tr}'}{n_0 \mu_n}} - \sqrt{n_{tr}} \right\|^2 = \int_\Omega n_{tr} \left( \sqrt{\frac{n n_{tr}'}{n_0 \mu_n n_{tr}}} - 1 \right)^2 dx \leq \int_\Omega n_{tr} \left( \frac{n n_{tr}'}{n_0 \mu_n n_{tr}} - 1 \right) \ln \left( \frac{n n_{tr}'}{n_0 \mu_n n_{tr}} \right) dx \\ 
= \int_\Omega \left( \frac{n (1-n_{tr})}{n_0 \mu_n} - n_{tr} \right) \ln \left( \frac{n (1-n_{tr})}{n_0 \mu_n n_{tr}} \right) dx = -\tau_n \int_\Omega R_n \ln \left( \frac{n (1-n_{tr})}{n_0 \mu_n n_{tr}} \right) dx
\end{multline*}
and similarly
\[
\left\| \sqrt{\frac{p n_{tr}}{p_0 \mu_p}} - \sqrt{n_{tr}'} \right\|^2 \leq -\tau_p \int_\Omega R_p \ln \left( \frac{p n_{tr}}{p_0 \mu_p (1-n_{tr})} \right) dx.
\]
Proposition \ref{propdifftrans} further implies that
\vspace{-1ex}
\begin{multline*}
\big\| \sqrt{n_{tr}} - \ol{\sqrt{n_{tr}}} \big\|^2 + \big\| \sqrt{n_{tr}'} - \ol{\sqrt{n_{tr}'}} \big\|^2 \leq \\
4 \left( \left\| \sqrt{\frac{n}{n_0 \mu_n}} - \ol{\sqrt{\frac{n}{n_0 \mu_n}}} \right\|^2 + \left\| \sqrt{\frac{p}{p_0 \mu_p}} - \ol{\sqrt{\frac{p}{p_0 \mu_p}}} \right\|^2 + \left\| \sqrt{\frac{n n_{tr}'}{n_0 \mu_n}} - \sqrt{n_{tr}} \right\|^2 + \left\| \sqrt{\frac{p n_{tr}}{p_0 \mu_p}} - \sqrt{n_{tr}'} \right\|^2 \right).
\end{multline*}
Combining the above estimates, we arrive at
\vspace{-1ex}
\begin{multline*}
\left( \sqrt{\ol{\frac{n}{n_0 \mu_n}}} - \nu_\infty \right)^2 + \left( \sqrt{\ol{\frac{p}{p_0 \mu_p}}} - \pi_\infty \right)^2 + \| \sqrt{n_{tr}} - \sqrt{n_{tr,\infty}} \|^2 \\
\leq C_3 \int_{\Omega} \left(\frac{|J_n|^2}{n} + \frac{|J_p|^2}{p} - R_n \ln \left( \frac{n(1-n_{tr})}{n_0 \mu_n n_{tr}} \right) - R_p \ln \left( \frac{p n_{tr}}{p_0 \mu_p (1-n_{tr})} \right) \right)
\end{multline*}
with a constant $C_3(\varepsilon_0, \tau_n, \tau_p, n_0, p_0, M, M_1, V) > 0$ uniformly for $\varepsilon \in (0, \varepsilon_0]$. With respect to \eqref{eqprelimeedineq}, we now find
\vspace{-1ex}
\begin{align*}
n_0 &\left( \sqrt{\ol{\frac{n}{n_0 \mu_n}}} - \nu_\infty \right)^2 + p_0 \left( \sqrt{\ol{\frac{p}{p_0 \mu_p}}} - \pi_\infty \right)^2 + \varepsilon \int_\Omega \big( \sqrt{n_{tr}} - \sqrt{n_{tr,\infty}} \big)^2 \, dx \\
&\qquad\leq \max\{n_0, p_0, \varepsilon_0\} \left( \left( \sqrt{\ol{\frac{n}{n_0 \mu_n}}} - \nu_\infty \right)^2 + \left( \sqrt{\ol{\frac{p}{p_0 \mu_p}}} - \pi_\infty \right)^2 + \| \sqrt{n_{tr}} - \sqrt{n_{tr,\infty}} \|^2 \right) \\
&\qquad\leq C_3 \max\{n_0, p_0, \varepsilon_0\} \int_{\Omega} \left(\frac{|J_n|^2}{n} + \frac{|J_p|^2}{p} - R_n \ln \left( \frac{n(1-n_{tr})}{n_0 \mu_n n_{tr}} \right) - R_p \ln \left( \frac{p n_{tr}}{p_0 \mu_p (1-n_{tr})} \right) \right).
\end{align*}
And since the constant $C_1$ in \eqref{eqprelimeedineq} only depends on $\varepsilon_0$, $n_0$, $p_0$, $M$, $M_1$ and $V$ (via the constants $\gamma$ and $\Gamma$), we have finally proven that
\vspace{-1ex}
\begin{multline*}
E(n,p,n_{tr}) - E(n_\infty, p_\infty, n_{tr,\infty}) \\
\leq C_4 \int_{\Omega} \left(\frac{|J_n|^2}{n} + \frac{|J_p|^2}{p} - R_n \ln \left( \frac{n(1-n_{tr})}{n_0 \mu_n n_{tr}} \right) - R_p \ln \left( \frac{p n_{tr}}{p_0 \mu_p (1-n_{tr})} \right) \right) dx
\end{multline*}
for a constant $C_4(\varepsilon_0, \tau_n, \tau_p, n_0, p_0, M, M_1, V) > 0$ independent of $\varepsilon \in (0, \varepsilon_0]$.
\end{proof}

Theorem \ref{theoremeed} provides an upper bound for the relative entropy in terms of the entropy production. This already implies exponential convergence of the relative entropy. The subsequent Proposition now yields a lower bound for the relative entropy involving the $L^1$-distance of the solution to the equilibrium. This will allow us to establish exponential convergence in $L^1$.

\begin{proposition}[Csisz\'ar--Kullback--Pinsker inequality]
\label{propentropyl1}
Let $\varepsilon_0$, $n_0$, $p_0$, $M$, $M_1$ and $V$ be positive constants. Then, there exists an explicit constant $C_{\mathrm{CKP}} > 0$ such that for all $\varepsilon \in (0, \varepsilon_0]$, the equilibrium $(n_\infty, p_\infty, n_{tr,\infty}) \in X$ from Theorem \ref{theoremequilibrium} and all non-negative functions $(n,p,n_{tr}) \in L^1(\Omega)^3$ satisfying $n_{tr} \leq 1$, the conservation law 
\[
\ol n - \ol p + \varepsilon \ol{n_{tr}} = M
\]
and the $L^1$-bound 
\[
\ol n, \ol p \leq M_1,
\]
the following Csisz\'ar--Kullback--Pinsker-type inequality holds true:
\[
E(n,p,n_{tr}) - E(n_\infty, p_\infty, n_{tr,\infty}) \geq C_{\mathrm{CKP}} \big( \| n - n_\infty \|_{L^1(\Omega)}^2 + \| p - p_\infty \|_{L^1(\Omega)}^2 + \varepsilon \| n_{tr} - n_{tr,\infty} \|_{L^1(\Omega)}^2 \big).
\]

\begin{proof}
Due to Lemma \ref{lemmarelativeentropy}, we know that the relative entropy reads
\vspace{-1ex}
\begin{multline*}
E(n,p,n_{tr}) - E(n_\infty, p_\infty, n_{tr,\infty}) = \\
\int_{\Omega} \left( n \ln \frac{n}{n_\infty} - (n-n_\infty) + p \ln \frac{p}{p_\infty} - (p-p_\infty) + \varepsilon \int_{n_{tr,\infty}}^{n_{tr}} \left( \ln \left( \frac{s}{1-s} \right) - \ln \left( \frac{n_{tr,\infty}}{1-n_{tr,\infty}} \right) \right) ds \right) dx.
\end{multline*}
Similar to Proposition \ref{propeedineq}, we employ the mean-value theorem and observe that
\[
\frac{d}{ds} \ln \left( \frac{s}{1-s} \right) = \frac{1}{s(1-s)} \geq 4
\]
for all $s \in (0,1)$. Thus, there exists some $\sigma(s)$ between $n_{tr,\infty}$ and $s$ such that
\begin{multline*}
\varepsilon \int_\Omega \int_{n_{tr,\infty}}^{n_{tr}} \left( \ln \left( \frac{s}{1-s} \right) - \ln \left( \frac{n_{tr,\infty}}{1 - n_{tr,\infty}} \right) \right) ds \, dx = \varepsilon \int_\Omega \int_{n_{tr,\infty}}^{n_{tr}} \frac{1}{\sigma(s)(1 - \sigma(s))} (s - n_{tr,\infty}) \, ds \, dx \\ 
\geq 4 \varepsilon \int_\Omega \int_{n_{tr,\infty}}^{n_{tr}} (s - n_{tr,\infty}) \, ds \, dx = 2 \varepsilon \int_\Omega (n_{tr} - n_{tr,\infty})^2 \, dx \geq 2 \varepsilon  \| n_{tr} - n_{tr,\infty} \|_{L^1(\Omega)}^2
\end{multline*}
where the last inequality holds true since $|\Omega| = 1$. Moreover, we utilise the  Csisz\'ar--Kullback--Pinsker-inequality from Lemma \ref{lemmackp} to estimate
\[
\int_\Omega \left( n \ln \left( \frac{n}{n_\infty} \right) - (n - n_\infty) \right) dx \geq \frac{3}{2 \ol n + 4 \ol{n_\infty}} \| n - n_\infty \|_{L^1(\Omega)}^2 \geq c \| n - n_\infty \|_{L^1(\Omega)}^2
\]
where $c(\varepsilon_0, n_0, p_0, M, M_1, V) > 0$ is a positive constant independent of $\varepsilon \in (0, \varepsilon_0]$. As a corresponding estimate holds true also for $p$, we have verified that
\[
E(n,p,n_{tr}) - E(n_\infty, p_\infty, n_{tr,\infty}) \geq C \big( \| n - n_\infty \|_{L^1(\Omega)}^2 + \| p - p_\infty \|_{L^1(\Omega)}^2 + \varepsilon \| n_{tr} - n_{tr,\infty} \|_{L^1(\Omega)}^2 \big)
\]
for some $C(\varepsilon_0, n_0, p_0, M, M_1, V) > 0$ uniformly for $\varepsilon \in (0, \varepsilon_0]$.
\end{proof}
\end{proposition}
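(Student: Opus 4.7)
The plan is to use the explicit form of the relative entropy provided by Lemma \ref{lemmarelativeentropy}, which decouples the contributions of $n$, $p$, and $n_{tr}$, and then bound each of the three integrands from below by the appropriate squared $L^1$-distance with an $\varepsilon$-uniform constant.

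First I would handle the $n$ and $p$ contributions by applying the standard Csisz\'ar--Kullback--Pinsker inequality of Lemma \ref{lemmackp} with $(f,g) = (n, n_\infty)$ and $(p, p_\infty)$, yielding
\[
\int_\Omega \left( n \ln \tfrac{n}{n_\infty} - (n - n_\infty) \right) dx \;\ge\; \frac{3}{2 \ol n + 4 \ol{n_\infty}} \, \| n - n_\infty \|_{L^1(\Omega)}^2,
\]
and analogously for $p$. The prefactor is uniform in $\varepsilon \in (0,\varepsilon_0]$: by hypothesis $\ol n, \ol p \le M_1$, and by Proposition \ref{propbounds} the constants $n_\ast, p_\ast$ (and hence $\ol{n_\infty} = n_\ast \ol{\mu_n}$, $\ol{p_\infty} = p_\ast \ol{\mu_p}$) are uniformly bounded above in terms of $\varepsilon_0, n_0, p_0, M, V$ only.

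Second, for the $n_{tr}$-contribution I would exploit the convexity of the primitive of the logit. Setting $F(\xi) := \int_{n_{tr,\infty}}^{\xi} \bigl( \ln \tfrac{s}{1-s} - \ln \tfrac{n_{tr,\infty}}{1-n_{tr,\infty}} \bigr) ds$, one has $F(n_{tr,\infty}) = F'(n_{tr,\infty}) = 0$ and
\[
F''(s) = \frac{1}{s(1-s)} \ge 4 \quad \text{for all } s \in (0,1),
\]
since $s(1-s) \le 1/4$. A second-order Taylor expansion (with remainder) therefore gives the pointwise bound $F(n_{tr}(x)) \ge 2 (n_{tr}(x) - n_{tr,\infty})^2$ for a.e.\ $x \in \Omega$. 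Integrating this over $\Omega$ and using $|\Omega| = 1$ together with Cauchy--Schwarz yields
\[
\varepsilon \int_\Omega F(n_{tr}(x))\, dx \;\ge\; 2\varepsilon \, \| n_{tr} - n_{tr,\infty} \|_{L^2(\Omega)}^2 \;\ge\; 2\varepsilon \, \| n_{tr} - n_{tr,\infty} \|_{L^1(\Omega)}^2.
\]

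Finally, adding the three lower bounds and taking $C_{\mathrm{CKP}}$ to be the minimum of the resulting coefficients completes the proof. I do not expect a genuine technical obstacle: the only point that needs explicit attention is verifying the $\varepsilon$-independence of the CKP prefactor, which is immediate from Proposition \ref{propbounds}; the $n_{tr}$-bound reduces to the one-line observation $s(1-s) \le 1/4$.
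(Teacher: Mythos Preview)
Your proposal is correct and follows essentially the same route as the paper: both apply Lemma \ref{lemmarelativeentropy} to decouple the three contributions, use Lemma \ref{lemmackp} together with the uniform equilibrium bounds of Proposition \ref{propbounds} for the $n$- and $p$-terms, and exploit $\frac{d}{ds}\ln\frac{s}{1-s}=\frac{1}{s(1-s)}\ge 4$ to obtain $F(n_{tr})\ge 2(n_{tr}-n_{tr,\infty})^2$ for the $n_{tr}$-term. The only cosmetic difference is that the paper phrases the last step via the mean-value theorem on the integrand rather than as a second-order Taylor remainder for $F$.
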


Now, we are able to prove exponential convergence in relative entropy and in $L^1$.

\begin{proof}[\bf Proof of Theorem \ref{theoremconvergence}]
We first prove exponential convergence of the relative entropy 
\[
\psi(t) := E(n,p,n_{tr})(t) - E(n_\infty, p_\infty, n_{tr,\infty})
\]
using a Gronwall argument as stated in \cite{Wil65}. To this end, we choose $0 < t_0 \leq t_1 \leq t < T$ and rewrite the entropy-production law as 
\begin{equation}
\label{eqentropydisslaw}
\psi(t_1) - \psi(t) = \int_{t_1}^{t} D(n, p, n_{tr})(s) \, ds \geq K \int_{t_1}^{t} \psi(s) \, ds
\end{equation}
where we applied Theorem \ref{theoremeed} with $K := C_{\mathrm{EED}}^{-1}$ in the second step. Furthermore, we set
\[
\Psi(t_1) := \int_{t_1}^{t} \psi(s) \, ds = - \int_{t}^{t_1} \psi(s) \, ds
\]
and obtain from \eqref{eqentropydisslaw} the estimate $K \Psi(t_1) \leq \psi(t_1) - \psi(t)$ which yields
\[
\frac{d}{dt_1} \Big( \Psi(t_1) e^{K t_1} \Big) = - \psi(t_1) e^{K t_1} + K \Psi(t_1) e^{K t_1} \leq - \psi(t) e^{K t_1}.
\]
Integrating this inequality from $t_1 = t_0$ to $t_1 = t$ and observing that $\Psi(t) = 0$ gives rise to
\[
-\Psi(t_0) e^{K t_0} \leq - \frac{\psi(t)}{K} \big( e^{K t} - e^{K t_0} \big).
\]
As a consequence of \eqref{eqentropydisslaw} with $t_1 = t_0$, one has $- \Psi(t_0) \geq (\psi(t) - \psi(t_0)) / K$ and, hence,
\[
- \psi(t_0) e^{K t_0} \leq - \psi(t) e^{K t}.
\]
But this is equivalent to
\begin{equation}
\label{eqexpoprelim}
E(n,p,n_{tr})(t) - E(n_\infty, p_\infty, n_{tr,\infty}) \leq (E(n,p,n_{tr})(t_0) - E_\infty) e^{-K (t-t_0)},
\end{equation}
for all $t\ge t_0>0$. 
In order to conclude that 
\[
E(n,p,n_{tr})(t) - E(n_\infty, p_\infty, n_{tr,\infty}) \leq (E_I - E_\infty) e^{-K t},
\]
for all $t \geq 0$, we observe that the rate $K$ is independent of $t_0$ and that the entropy $E(n,p,n_{tr})(t_0)$ extends
in \eqref{eqexpoprelim} continuously to $t_0 \rightarrow 0$ since $n,p \in C([0,T);L^2(\Omega))$ for all $T>0$ by Theorem 
\ref{theoremsolution}.
This results in the announced exponential decay of the relative entropy, while the exponential convergence in $L^1$ follows from Proposition \ref{propentropyl1}.
\end{proof}

\begin{proof}[\bf Proof of Corollary \ref{coro}]
We first prove that the linearly growing $L^{\infty}$-bounds
together with parabolic regularity for system \eqref{eqsystem} 
 and assumption \eqref{eqpot}  
entail polynomially growing $W^{1,q}$-bounds, $q \in (1,\infty)$,
for $n$ and $p$. 
To this end, we consider 
\begin{equation*}
\partial_t n = \nabla \cdot J_n + \frac{1}{\tau_n} \left( n_{tr} - \frac{n}{n_0 e^{-V_n}} \bigl(1-n_{tr}\bigr) \right),\qquad J_n=e^{-V_n} \nabla \big(n\, e^{V_n} \bigr).
\end{equation*}
and introduce the variable $w=n \, e^{V_n}$. We observe that $\nabla \cdot J_n = \nabla \cdot \big(e^{-V_n} \nabla w \big)=e^{-V_n}\left( \Delta w - \nabla V_n \cdot \nabla w\right)$ and thus, 
\begin{equation}\label{eq:om}
\partial_t w = \Delta w - \nabla V_n \cdot \nabla w + \frac{e^{V_n}}{\tau_n} \left( n_{tr} - \frac{1-n_{tr}}{n_0} w  \right).
\end{equation}
Under the assumptions of Corollary \ref{coro}, Eq. \eqref{eq:om}
is of the form
\[
\partial_t w - \Delta w = f_1 + f_2 w + f_3 \cdot \nabla w
\]
where $f_i \in C([0, \infty), L^\infty(\Omega))$ for $i \in \{1,2\}$, $f_3 \in C([0, \infty), L^\infty(\Omega)^m)$ and $\hat n \cdot \nabla w = 0$ on $\partial \Omega$. Testing this equation with $-(q-1) |\nabla w|^{q-2} \Delta w$ yields
\begin{align*}
\frac1q \frac{d}{dt} &\int_\Omega |\nabla w|^q \, dx = \int_\Omega |\nabla w|^{q-2} \nabla w \cdot \nabla \partial_t w \, dx \\
&= - \int_\Omega \bigg( (q-2) |\nabla w|^{q-4} \nabla w \Delta w \cdot \nabla w + |\nabla w|^{q-2} \Delta w \bigg) \partial_t w \, dx = - \int_\Omega (q-1) |\nabla w|^{q-2} \Delta w \, \partial_t w \, dx \\
&= - \int_\Omega (q-1) |\nabla w|^{q-2} |\Delta w|^2 dx - \int_\Omega (q-1) |\nabla w|^{q-2} \Delta w \big( f_1 + f_2 w + f_3 \cdot \nabla w \big) dx.
\end{align*}
Using the inequalities $|ab| \leq (a^2 + b^2)/2$ and $(a+b+c)^2 \leq 3(a^2 + b^2 + c^2)$ for $a,b,c \in \mathbb R$, we find
\[
\frac1q \frac{d}{dt} \int_\Omega |\nabla w|^q \, dx \leq -\frac12 \int_\Omega (q-1) |\nabla w|^{q-2} |\Delta w|^2 \, dx + \frac32 \int_\Omega (q-1) |\nabla w|^{q-2} \big( f_1^2 + f_2^2 w^2 + f_3^2 |\nabla w|^2 \big) dx.
\]
Together with $C > 0$ satisfying $|f_i(t,x)^2| \leq C$ for all $i \in \{1,2,3\}$, $t \geq 0$ and a.a. $x \in \Omega$, we derive
\[
\frac1q \frac{d}{dt} \int_\Omega |\nabla w|^q \, dx \leq -\frac12 \int_\Omega (q-1) |\nabla w|^{q-2} |\Delta w|^2 \, dx + \frac{3C}{2} \int_\Omega (q-1) |\nabla w|^{q-2} \big( 1 + w^2 + |\nabla w|^2 \big) dx.
\]
An integration by parts and Young's inequality with $C_1 > 0$ give rise to 
\vspace*{-1ex}
\begin{multline*}
\int_\Omega (q-1) |\nabla w|^{q-2} \nabla w \cdot \nabla w \, dx = - \int_{\Omega} \bigg( (q-1)(q-2) |\nabla w|^{q-4} \nabla w \Delta w \cdot \nabla w + (q-1) |\nabla w|^{q-2} \Delta w \bigg) w \, dx \\
= - \int_\Omega (q-1)^2 |\nabla w|^{q-2} \Delta w \, w \, dx \leq \frac{1}{3C} \int_\Omega (q-1)^1 |\nabla w|^{q-2} |\Delta w|^2 \, dx + C_1 \int_\Omega |\nabla w|^{q-2} w^2 \, dx.
\end{multline*}
Hence, there exists a constant $C_2 > 0$ such that
\[
\frac{d}{dt} \int_\Omega |\nabla w|^q \, dx \leq C_2 \int_\Omega |\nabla w|^{q-2} (1 + w^2) \, dx \leq (A + B \, t^2) \int_\Omega |\nabla w|^{q-2} \, dx,
\]
where $A, B > 0$ result from the linearly growing $L^\infty$-bounds from \eqref{npLinfty}. For any fixed $t_0 > 0$ and all $t \geq t_0$, we now have
\[
\| \nabla w(t) \|_{L^q(\Omega)}^q \leq \| \nabla w(t_0) \|_{L^q(\Omega)}^q + \int_{t_0}^t (A + B \, s^2) \| \nabla w(s) \|_{L^q(\Omega)}^{q-2} \, ds.
\]
A Gronwall lemma (see e.g. \cite{Bee75}) now proves the desired polynomial growth of $\| \nabla w \|_{L^q(\Omega)}$ and $\| \nabla n \|_{L^q(\Omega)}$:
\[
\| \nabla w(t) \|_{L^q(\Omega)} \leq \bigg( \| \nabla w(t_0) \|_{L^q(\Omega)}^2 + A (t-t_0) + \frac{B}{3} (t^3 - t_0^3) \bigg)^{\frac12}.
\]

Next, we use (see e.g. \cite{Tay}) the Gagliardo-Nirenberg-Moser interpolation  
inequality
\begin{equation}
\|n\|_{L^{\infty}} \le G(\Omega) \| n \|_{W^{1,2n}}^{\frac12} \| n \|_{L^{2n}}^{\frac12} \,. \label{GagNir}
\end{equation}
Then, interpolating with the exponentially decaying $L^1$-norm of $n-n_\infty$, we obtain
\begin{align*}
\|n(t,\cdot)\|_{L^{\infty}} &\le \|n_{\infty}\|_{L^{\infty}} +
\|n-n_{\infty}\|_{L^{\infty}} \le  \|n_{\infty}\|_{L^{\infty}} +
G\|n-n_\infty\|_{W^{1,2n}}^{\frac12} \|n-n_{\infty}\|_{L^{\infty}}^{\frac12 - \frac{1}{4n}}
\|n-n_{\infty}\|_{L^1}^{\frac{1}{4n}}\le K
\end{align*}
due to the exponential convergence to equilibrium \eqref{expo}. The estimate for $p$ follows in the same way.
\end{proof}

\begin{proof}[\bf Proof of Theorem \ref{theoremeed'}]
Our goal is to derive an estimate of the form 
\[
E_0(n, p) - E_0(n_{\infty,0}, p_{\infty,0}) \leq C_{EEP} D_0(n, p)
\]
by applying the EEP-inequality from Theorem \ref{theoremeed} directly to the functions $n$, $p$ and $n^{eq}_{tr}$. However, since we assume that $n$ and $p$ satisfy 
\[
\ol n - \ol p = M,
\]
the triple $(n, p, n^{eq}_{tr})$ does not satisfy the conservation law with right hand side $M$ but 
\[
\ol n - \ol p + \varepsilon \ol{n^{eq}_{tr}} = M + \varepsilon \ol{n^{eq}_{tr}}.
\]
In order to resolve this issue, we shall apply the EEP-inequality from Theorem \ref{theoremeed} to a suitably defined sequence of functions $(n_\varepsilon, p_\varepsilon, n_{tr,\varepsilon}) \in L^1(\Omega)^3$ which fulfil $\| n_{tr,\varepsilon} \|_{L^\infty(\Omega)} \leq 1$, the $L^1$-bound $\ol{n_\varepsilon}, \ol{p_\varepsilon} \leq M_1$ and the conservation law 
\[
\ol{n_\varepsilon} - \ol{p_\varepsilon} + \varepsilon \ol{n_{tr,\varepsilon}} = M.
\]
A convenient choice is $n_\varepsilon \coleq n$, $p_\varepsilon \coleq p + \varepsilon \ol{n_{tr}^{eq}}$ and $n_{tr,\varepsilon} \coleq n^{eq}_{tr}$, where $n^{eq}_{tr}=n^{eq}_{tr}(n,p)$ as defined in \eqref{ntreq}.
For this choice, we derive the stated EEP-estimate for the case $\varepsilon = 0$ via the following steps, which are proven below:
\begin{align}
E_0(n, p) - E_0(n_{\infty,0}, p_{\infty,0}) &= \lim\limits_{\varepsilon \rightarrow 0} \big( E(n_\varepsilon, p_\varepsilon, n_{tr,\varepsilon}) - E(n_\infty, p_\infty, n_{tr,\infty}) \big) \label{eqconventropy} \\ 
&\leq \lim\limits_{\varepsilon \rightarrow 0} \big( C_{EEP} D(n_\varepsilon, p_\varepsilon, n_{tr,\varepsilon}) \big) \label{eqeedineq} \\
&= C_{EEP} D(n, p, n^{eq}_{tr}) \label{eqconvdissip} 
= C_{EEP} D_0(n, p) 
\end{align}
We recall that $n$ and $p$ are assumed to satisfy $E_0(n, p)<\infty$ and $D_0(n, p),D(n, p, n^{eq}_{tr})<\infty$, which implies that $D_0(n, p)=D(n, p, n^{eq}_{tr})$ as discussed in the introduction.

\paragraph{Step 1. Proof of \eqref{eqconventropy}:} We first show, that with $(n_\varepsilon, p_\varepsilon, n_{tr,\varepsilon})=(n, p_\varepsilon, n^{eq}_{tr})$
\begin{equation}\label{llll}
E_0(n, p) = \lim\limits_{\varepsilon \rightarrow 0} E(n_\varepsilon, p_\varepsilon, n_{tr,\varepsilon}).
\end{equation}
Recalling that 
\[
E(n,p_\varepsilon,n^{eq}_{tr}) = \int_{\Omega} \left( n \ln \frac{n}{n_0 \mu_n} - (n-n_0\mu_n) + p_\varepsilon \ln \frac{p_\varepsilon}{p_0 \mu_p} - (p_\varepsilon-p_0\mu_p) + \varepsilon \int_{1/2}^{n^{eq}_{tr}} \ln \left( \frac{s}{1-s} \right) ds \right) dx,
\]
we first notice that $p_\varepsilon=p + \varepsilon \ol{n_{tr}^{eq}} \rightarrow p$ monotonically decreasing for $\varepsilon \rightarrow 0$ for all $x\in\Omega$. Thus, by using $\ol{n_{tr}^{eq}} \le 1$
and the elementary estimate  
$p_\varepsilon \ln p_\varepsilon
\le 2 p\, (\ln p + \ln 2)$ for $p\ge \max\{\varepsilon_0,1\}$, 
the Lebesgue dominated  convergence theorem, the $L^1$-bounds $\ol{n},\ol{n_\varepsilon}, \ol{p},\ol{p_\varepsilon} \leq M_1$ and
$E_0(n, p)<\infty$ imply the convergence of the 
$p_\varepsilon$-integral in \eqref{llll}.
The convergence of the third integral follows directly from
\[
\left| \varepsilon \int_{1/2}^{n^{eq}_{tr}(x)} \ln \frac{s}{1-s} \, ds \right| \leq \varepsilon \int_{1/2}^1 \ln \frac{s}{1-s} \, ds \xrightarrow{\varepsilon \rightarrow 0} 0.
\]

Using analog arguments, the convergence 
\[
E_0(n_{\infty,0}, p_{\infty,0}) = \lim\limits_{\varepsilon \rightarrow 0} E(n_\infty, p_\infty, n_{tr,\infty})
\]
follows from observing the monotone convergence 
$n_\ast \rightarrow n_{\ast,0}$ and $p_\ast \rightarrow p_{\ast,0}$ for $\varepsilon \rightarrow 0$ due to \eqref{eqninfformula} in Proposition \ref{propbounds}, which directly implies the monotone convergence 
$n_\infty \rightarrow n_{\infty,0}$ and $p_\infty \rightarrow p_{\infty,0}$ for all $x\in\Omega$, where 
$(n_\infty, p_\infty, n_{tr,\infty})$ and  $(n_{\infty,0}, p_{\infty,0})$
are defined in 
 \eqref{eqequilibrium} and  \eqref{eqequilibrium0}, respectively.


\paragraph{Step 2. Proof of \eqref{eqeedineq}:} The functions $(n_\varepsilon, p_\varepsilon, n_{tr,\varepsilon}) = (n, p + \varepsilon \ol{n_{tr}^{eq}}, n^{eq}_{tr}) \in L^1(\Omega)^3$ satisfy $\| n_{tr,\varepsilon} \|_{L^\infty(\Omega)} \leq 1$, the conservation law
\[
\ol n_{\varepsilon} - \ol{p_\varepsilon} + \varepsilon \ol{n_{tr,\varepsilon}} = \ol n - \ol p = M
\]
as well as the $L^1$-bounds $\ol n_{\varepsilon} \leq M_1$ and $\ol{p_\varepsilon} \leq \ol{p} + \varepsilon'$ where $\varepsilon \in (0, \varepsilon'] \subset (0, \varepsilon_0]$. Because of $\ol{p} < M_1$, we have $\ol{p_\varepsilon} \leq M_1$ for $\varepsilon' > 0$ sufficiently small.
As a consequence, 
\[
E(n_\varepsilon, p_\varepsilon, n_{tr,\varepsilon}) - E(n_\infty, p_\infty, n_{tr,\infty}) \leq C_{EEP} D(n_\varepsilon, p_\varepsilon, n_{tr,\varepsilon})
\]
where $C_{EEP} > 0$ is the same constant as in Theorem \ref{theoremeed}.

\paragraph{Step 3. Proof of \eqref{eqconvdissip}:} As the constant $C_{EEP} > 0$ is independent of $\varepsilon \in (0, \varepsilon_0]$, it suffices to show that 
\[
\lim\limits_{\varepsilon \rightarrow 0} D(n_\varepsilon, p_\varepsilon, n_{tr,\varepsilon}) = D(n, p, n^{eq}_{tr}).
\]
To this end, we consider the representation
\begin{multline*}
D(n_\varepsilon, p_\varepsilon, n_{tr,\varepsilon}) = \int_{\Omega} \bigg(\frac{|J_n|^2}{n} + \frac{|\nabla p|^2}{p_\varepsilon} + 2 \nabla p \cdot \nabla V_p + p_\varepsilon |\nabla V_p|^2 \\
- R_n \ln \left( \frac{n(1-n^{eq}_{tr})}{n_0 \mu_n n^{eq}_{tr}} \right) + \frac{1}{\tau_p} \biggl( \frac{p_\varepsilon}{p_0 \mu_p} n^{eq}_{tr} - (1 - n^{eq}_{tr}) \biggr) \biggl( \ln  \frac{p_\varepsilon n^{eq}_{tr}}{p_0 \mu_p}  - \ln(1 - n^{eq}_{tr}) \biggr) \bigg) \, dx,
\end{multline*}
where we have already taken into account that $n_\varepsilon=n$, $\nabla p_\varepsilon = \nabla p$ and $n_{tr,\varepsilon}=n^{eq}_{tr}$ for all $\varepsilon > 0$. 

We note first that the convergence of the second, third and forth 
integral follows from the pointwise convergence of $p_{\varepsilon}$ for all $x\in \Omega$ and from the 
Lebesgue dominated convergence theorem by estimating 
\[
0 \leq \frac{|\nabla p|^2}{p_\varepsilon} + 2 \nabla p \cdot \nabla V_p + p_\varepsilon |\nabla V_p|^2 \leq \frac{|\nabla p|^2}{p} + 2 \nabla p \cdot \nabla V_p + p |\nabla V_p|^2 + (p_\varepsilon - p) |\nabla V_p|^2 \leq \frac{|J_p|^2}{p} + \varepsilon_0 \ol{n^{eq}_{tr}} |\nabla V_p|^2,
\]
where the function on the right hand side is integrable due to the finiteness of $D(n, p, n^{eq}_{tr})$. 

Secondly, the product 
\[
\left( \frac{p_\varepsilon}{p_0 \mu_p} n^{eq}_{tr} - (1 - n^{eq}_{tr}) \right)\!\left( \ln  \frac{p_\varepsilon n^{eq}_{tr}}{p_0 \mu_p} \!-\!\ln(1 - n^{eq}_{tr}) \right)\!\rightarrow\!\left( \frac{p}{p_0 \mu_p} n^{eq}_{tr} - (1 - n^{eq}_{tr}) \right)\!\left( \ln  \frac{p n^{eq}_{tr}}{p_0 \mu_p} -\ln(1 - n^{eq}_{tr}) \right)
\]
converges pointwise for all $x\in\Omega$ as $\varepsilon\to0$. 
In order to conclude the convergence of the corresponding integral via the Lebesgue dominated convergence theorem, 
we use similar to Step 1 the elementary 
inequality $p_\varepsilon \ln p_\varepsilon
\le 2 p\, (\ln p + \ln 2)$ for $p\ge \max\{\varepsilon_0,1\}$ and the finiteness of $D(n, p, n^{eq}_{tr})$.
This yields
\[
\lim\limits_{\varepsilon \rightarrow 0} \int_\Omega \frac{1}{\tau_p} \left( \frac{p_\varepsilon}{p_0 \mu_p} n^{eq}_{tr}\!-\!(1 - n^{eq}_{tr}) \right)\!\left( \ln  \frac{p_\varepsilon n^{eq}_{tr}}{p_0 \mu_p} -\ln(1 - n^{eq}_{tr}) \right) dx = -\int_\Omega R_p \ln \left( \frac{p n^{eq}_{tr}}{p_0 \mu_p (1-n^{eq}_{tr})} \right) dx
\]
and therefore, $D(n_\varepsilon, p_\varepsilon, n_{tr,\varepsilon}) \rightarrow D(n, p, n^{eq}_{tr})$ for $\varepsilon \rightarrow 0$.
\end{proof}

\begin{proof}[\bf Proof of Theorem \ref{theoremconvergence'}]
We only have to check that the assuptions on the finiteness of the entropy $E$ and its production $D$ within Theorem \ref{theoremeed'} are satisfied. The claim of this Theorem then follows from the same arguments as in the proof of Theorem \ref{theoremconvergence}.

Due to the uniform $L^\infty$-bounds \eqref{npLinfty'} of $n(t)$ and $p(t)$ for all $t \geq 0$, we know that $E_0(n, p) < \infty$ for all $t \geq 0$. Similarly, we deduce that $D(n, p, n^{eq}_{tr})$ and $D_0(n, p)$ are finite for all strictly positive $t > 0$ since $n$, $p$ are bounded away from zero 
and $n^{eq}_{tr}$ is bounded away from zero and one uniformly in $\Omega$. 

Finally, the lower bounds \eqref{npbounds'} guarantee similar to Theorem \ref{theoremeed} that solutions satisfy the 
weak entropy production law \eqref{wep'} for all $t_0 > 0$.
\end{proof}

\section*{Appendix: Proof of the existence-theorem}
\label{sectionexistence}
\begin{proof}[\bf Proof of Theorem \ref{theoremsolution}] In order to simplify the notation, we set the parameter $\tau_n := \tau_p := 1$ and $n_0 := p_0 := 1$ throughout the proof. All arguments also apply in the case of arbitrary values for $\tau_n$, $\tau_p$, $n_0$ and $p_0$. The structure of system \eqref{eqsystem} can be further simplified via introducing new variables 
\[
u := e^\frac{V_n}{2}n, \qquad v := e^\frac{V_p}{2}p.
\]
One obtains
\[
\nabla u = \frac{1}{2} e^\frac{V_n}{2} \nabla V_n n + e^\frac{V_n}{2} \nabla n \qquad \mbox{and} \qquad \Delta u = e^\frac{V_n}{2} \Big( \Delta n + \nabla n \cdot \nabla V_n + \frac{1}{4} n |\nabla V_n|^2 + \frac{1}{2} n \Delta V_n \Big)
\]
which results in 
\begin{align*}
\partial_t u &= e^\frac{V_n}{2} \partial_t n = e^\frac{V_n}{2} \Big( \Delta n + \nabla n \cdot \nabla V_n + n \Delta V_n + R_n \Big) = \Delta u - e^\frac{V_n}{2} \Big( \frac{1}{4} n |\nabla V_n|^2 - \frac{1}{2} n \Delta V_n \Big) + e^\frac{V_n}{2} R_n \\
&= \Delta u + \Big( \frac{1}{2} \Delta V_n - \frac{1}{4} |\nabla V_n|^2 \Big) u + e^\frac{V_n}{2} n_{tr} - e^{V_n} u (1 - n_{tr}).
\end{align*}
Analogously, we derive 
\[
\partial_t v = \Delta v + \Big( \frac{1}{2} \Delta V_p - \frac{1}{4} |\nabla V_p|^2 \Big) v + e^\frac{V_p}{2} (1-n_{tr}) - e^{V_p} v n_{tr}.
\]
For convenience, we also introduce the abbreviations
\[
A_n := \frac{1}{2} \Delta V_n - \frac{1}{4} |\nabla V_n|^2 \in L^\infty(\Omega), \qquad A_p := \frac{1}{2} \Delta V_p - \frac{1}{4} |\nabla V_p|^2 \in L^\infty(\Omega)
\]
as well as $\alpha, \beta > 0$ such that the following estimates hold true a.e. in $\Omega$:
\[
|A_n|, |A_p| \leq \alpha \qquad \mbox{and} \qquad e^\frac{V_n}{2}, e^\frac{V_p}{2}, e^{V_n}, e^{V_p} \leq \beta.
\]

Next, we introduce the new variable 
\begin{equation}
\label{eqsumntrntr'}
n_{tr}' := 1 - n_{tr}
\end{equation}
for reasons of symmetry. In fact, we can prove the positivity of $n_{tr}'$ in the same way as for $n_{tr}$, which then implies the desired bound $0 \leq n_{tr} \leq 1$. A further ingredient for establishing the positivity of the variables $u$, $v$, $n_{tr}$ and $n_{tr}'$ is to project them onto $[0, \infty)$ and $[0, 1]$, respectively, on the right hand side of the PDE-system. In this context, we use $X^+\! := \max(X, 0)$ to denote the positive part of an arbitrary function $X$ and $X^{[0,1]} := \min(\max(X, 0), 1)$ for the projection of $X$ to the interval $[0,1]$. The modified system now reads
\begin{equation}
\label{eqsystemmodified}
\begin{cases}
\begin{aligned}
\partial_t u - \Delta u &= A_n u^+ + e^\frac{V_n}{2} n_{tr}^{[0,1]} - e^{V_n} u^+ n_{tr}'^{[0,1]}, \\
\partial_t v - \Delta v &= A_p v^+ + e^\frac{V_p}{2} n_{tr}'^{[0,1]} - e^{V_p} v^+ n_{tr}^{[0,1]}, \\
\varepsilon \, \partial_t n_{tr} &= n_{tr}'^{[0,1]} - e^\frac{V_p}{2} v^+ n_{tr}^{[0,1]} - n_{tr}^{[0,1]} + e^\frac{V_n}{2} u^+ n_{tr}'^{[0,1]}, \\
\varepsilon \, \partial_t n_{tr}' &= n_{tr}^{[0,1]} - e^\frac{V_n}{2} u^+ n_{tr}'^{[0,1]} - n_{tr}'^{[0,1]} + e^\frac{V_p}{2} v^+ n_{tr}^{[0,1]}.
\end{aligned}
\end{cases}
\end{equation}
The no-flux boundary conditions of \eqref{eqsystem} transfer to similar conditions on $u$ and $v$. In detail, we have
\[
e^{-\frac{V_n}{2}} \nabla u = \nabla n + \frac{1}{2} n \nabla V_n
\]
and, hence,
\[
\nabla n + n \nabla V_n = e^{-\frac{V_n}{2}} \Big( \nabla u + \frac{1}{2} u \nabla V_n \Big).
\]
Therefore, the corresponding boundary conditions for $u$ and $v$ read
\begin{equation}
\label{eqbcuv}
\hat n \cdot \Big(\nabla u + \frac{1}{2} u \nabla V_n \Big) = \hat n \cdot \Big(\nabla v + \frac{1}{2} v \nabla V_p \Big) = 0.
\end{equation}
Furthermore, we assume that the corresponding initial states satisfy
\begin{equation}
\label{eqicuv}
(u_I, v_I, n_{tr,I}, n_{tr,I}') \in L_+^\infty(\Omega)^4, \qquad n_{tr,I} + n_{tr,I}' = 1.
\end{equation}
In this situation, $\|n_{tr,I}\|_{L^\infty(\Omega)} + \|n_{tr,I}'\|_{L^\infty(\Omega)} \geq 1$ and we set
\[
I := \|u_I\|_{L^\infty(\Omega)} + \|v_I\|_{L^\infty(\Omega)} + \|n_{tr,I}\|_{L^\infty(\Omega)} + \|n_{tr,I}'\|_{L^\infty(\Omega)} \geq 1.
\]
We now aim to apply Banach's fixed-point theorem to obtain a solution of \eqref{eqsystemmodified}--\eqref{eqicuv}. 

\paragraph{Step 1: Definition of the fixed-point iteration.} For any time $T > 0$ (to be chosen sufficiently small in the course of the fixed-point argument), we introduce the space
\[
X_T := C([0, T], L^2(\Omega))^4 
\]
and the closed subspace
\vspace{-1ex}
\begin{multline*}
M_T := \big\{ (u, v, n_{tr}, n_{tr}') \in X_T \, \big| \, (u(0), v(0), n_{tr}(0), n_{tr}'(0)) = (u_I, v_I, n_{tr,I}, n_{tr,I}') \; \land \\ 
\max_{0 \leq t \leq T} \| u(t) \|_{L^2(\Omega)}, \, \max_{0 \leq t \leq T} \| v(t) \|_{L^2(\Omega)}, \, \max_{0 \leq t \leq T} \| n_{tr}(t) \|_{L^2(\Omega)}, \, \max_{0 \leq t \leq T} \| n_{tr}'(t) \|_{L^2(\Omega)} \leq 2I \; \land \\
\| u \|_{L^\infty((0, T) \times \Omega)}, \, \| v \|_{L^\infty((0, T) \times \Omega)} \leq 2I \big\} \subset X_T.
\end{multline*}
The fixed-point mapping $\mathcal{S}: X_T \rightarrow X_T$ is now defined via
\[
\mathcal{S}(\wt u, \wt v, \wt n_{tr}, \wt n_{tr}') := (u, v, n_{tr}, n_{tr}')
\]
where $(u, v, n_{tr}, n_{tr}')$ is the solution of the following PDE-system subject to the boundary and initial conditions specified above:
\begin{equation}
\label{eqsystemiter}
\begin{cases}
\begin{aligned}
\partial_t u - \Delta u &= A_n \wt u^+ + e^\frac{V_n}{2} \wt n_{tr}^{[0,1]} - e^{V_n} \wt u^+ \wt n_{tr}'^{[0,1]} && \hspace{-8pt} =: \wt f_1, \\
\partial_t v - \Delta v &= A_p \wt v^+ + e^\frac{V_p}{2} \wt n_{tr}'^{[0,1]} - e^{V_p} \wt v^+ \wt n_{tr}^{[0,1]} && \hspace{-8pt} =: \wt f_2, \\
\varepsilon \, \partial_t n_{tr} &= \wt n_{tr}'^{[0,1]} - e^\frac{V_p}{2} \wt v^+ \wt n_{tr}^{[0,1]} - \wt n_{tr}^{[0,1]} + e^\frac{V_n}{2} \wt u^+ \wt n_{tr}'^{[0,1]} && \hspace{-8pt} =: \wt f_3, \\
\varepsilon \, \partial_t n_{tr}' &= \wt n_{tr}^{[0,1]} - e^\frac{V_n}{2} \wt u^+ \wt n_{tr}'^{[0,1]} - \wt n_{tr}'^{[0,1]} + e^\frac{V_p}{2} \wt v^+ \wt n_{tr}^{[0,1]} && \hspace{-8pt} =: \wt f_4.
\end{aligned}
\end{cases}
\end{equation}

We first show that $(u, v, n_{tr}, n_{tr}') = \mathcal{S}(\wt u, \wt v, \wt n_{tr}, \wt n_{tr}') \in X_T$ provided $(\wt u, \wt v, \wt n_{tr}, \wt n_{tr}') \in X_T$. Due to $\wt f_1$, $\wt f_2 \in L^2((0, T) \times \Omega)$, it is known from classical PDE-theory (see e.g. \cite{Chi00}) that 
\[
u, v \in W_2(0, T) = \big\{ f \in L^2((0, T), H^1(\Omega)) \, | \, \partial_t f \in L^2((0, T), H^1(\Omega)^*) \big\} \hookrightarrow C([0, T], L^2(\Omega)).
\]
And since
\[
n_{tr}(t) = n_{tr}(0) + \frac{1}{\varepsilon} \int_0^t \wt f_3(s) \, ds
\]
for almost all $t \in [0, T]$, we deduce 
\[
\|n_{tr}(t)\|_{L^2(\Omega)} \leq \|n_{tr}(0)\|_{L^2(\Omega)} + \frac{1}{\varepsilon} \int_0^t \|\wt f_3(s)\|_{L^2(\Omega)} \, ds \leq I + \frac{T}{\varepsilon} \max_{0 \leq t \leq T} \| \wt f_3(s) \|_{L^2(\Omega)}.
\]
Hence, $n_{tr} \in L^\infty((0, T), L^2(\Omega))$. Moreover, we observe that for $[0, T] \ni t_n \rightarrow t \in [0, T]$ that 
\[
\| n_{tr}(t_n) - n_{tr}(t) \|_{L^2(\Omega)} \leq \frac{1}{\epsilon} \left| \int_t^{t_n} \| \wt f_3(s) \|_{L^2(\Omega)} \, ds \right| \leq \frac{|t_n - t|}{\epsilon} \max_{0 \leq t \leq T} \| \wt f_3(s) \|_{L^2(\Omega)} \xrightarrow{n\to\infty} 0.
\]
This proves $n_{tr} \in C([0, T], L^2(\Omega))$. The same arguments can be applied to $n_{tr}'$.

\paragraph{Step 2: Invariance of $M_T$.} Now, let $(\wt u, \wt v, \wt n_{tr}, \wt n_{tr}') \in M_T$.
Similar to the strategy of e.g. \cite{Ali79,GMS07,MWZ08}, we perform the subsequent calculations for any $q \in 2 \mathbb{N}_+$ and every $t \in [0, T]$:
\vspace{-1ex}
\begin{multline*}
\int_0^t \frac{d}{ds} \int_\Omega \frac{u^q}{q} \, dx \, ds = \int_0^t \int_\Omega u^{q-1} \partial_t u \, dx \, ds = \int_0^t \int_\Omega u^{q-1} \Delta u \, dx \, ds + \int_0^t \int_\Omega u^{q-1} \wt f_1 \, dx \, ds \\
\leq -(q-1) \int_0^t \int_\Omega u^{q-2} |\nabla u|^2 \, dx \, ds - \frac{1}{2} \int_0^t \int_{\partial \Omega} u^q \, \hat n \cdot \nabla V_n \, d\sigma \, ds + \| \wt f_1 \|_{L^\infty((0, T) \times \Omega)} \int_0^t \int_\Omega |u|^{q-1} \, dx \, ds \\
\leq (2 \alpha I + \beta + 2 \beta I) \int_0^t \|u\|_{L^q(\Omega)}^{q-1} \, ds.
\end{multline*}
Note that the first two terms in the second line are both non-positive due to $q \in 2\mathbb{N}$ and assumption \eqref{eqpot}. Introducing $\gamma := 2 \alpha I + \beta + 2 \beta I$, we obtain
\begin{equation}
\label{eqinvarbound}
\| u(t) \|_{L^q(\Omega)}^q - \| u(0) \|_{L^q(\Omega)}^q \leq q \gamma \int_0^t \| u(s) \|_{L^q(\Omega)}^{q-1} \, ds.
\end{equation}
This inequality already implies a linear bound on the $L^\infty$-norm of $u$ as we shall see below (cf. \cite{Bee75}). We define 
\[
U(t) := q\gamma \int_0^t \| u(s) \|_{L^q(\Omega)}^{q-1} \, ds
\]
and note that $U(0) = 0$. Estimate \eqref{eqinvarbound} entails
\[
U'(t) = q\gamma \Big( \| u(t) \|_{L^q(\Omega)}^{q} \Big)^\frac{q-1}{q} \leq q\gamma \Big( \eta + \| u(0) \|_{L^q(\Omega)}^{q} + U(t) \Big)^\frac{q-1}{q}
\]
for all $t \in [0, T]$, where $\eta > 0$ is an arbitrary constant, which guarantees that the expession $X:=\eta + \| u(0) \|_{L^q(\Omega)}^{q} + U(t)$ is strictly positive. Multiplying both sides with $X^{(1-q)/q}$ and integrating from $0$ to $t$ gives
\[
\int_0^t \Big( \eta + \| u(0) \|_{L^q(\Omega)}^{q} + U(s) \Big)^\frac{1-q}{q} U'(s) \, ds \leq \int_0^t q\gamma \, ds.
\]
We now substitute $\sigma := U(s)$ and deduce
\vspace{-1ex}
\begin{multline*}
q\gamma t \geq \int_0^{U(t)} \Big( \eta + \| u(0) \|_{L^q(\Omega)}^{q} + \sigma \Big)^{\frac{1}{q}-1} \, d\sigma = q \Big( \eta + \| u(0) \|_{L^q(\Omega)}^{q} + \sigma \Big)^\frac{1}{q} \, \Big|_0^{U(t)} \\
= q \Big( \eta + \| u(0) \|_{L^q(\Omega)}^{q} + U(t) \Big)^\frac{1}{q} - q \Big( \eta + \| u(0) \|_{L^q(\Omega)}^{q} \Big)^\frac{1}{q} \geq q \Big( \| u(t) \|_{L^q(\Omega)}^{q} \Big)^\frac{1}{q} - q \Big( \eta + \| u(0) \|_{L^q(\Omega)}^{q} \Big)^\frac{1}{q}
\end{multline*}
where we have used \eqref{eqinvarbound} in the last step. Therefore,
\[
\| u(t) \|_{L^q(\Omega)} \leq \Big( \eta + \| u(0) \|_{L^q(\Omega)}^{q} \Big)^\frac{1}{q} + \gamma t
\]
and, taking the limit $\eta \rightarrow 0$,
\[
\| u(t) \|_{L^q(\Omega)} \leq \| u(0) \|_{L^q(\Omega)} + \gamma t \leq I + \gamma t.
\]
As the bound on the right hand side is independent of $q$, we even obtain
\begin{equation}
\label{eqlingrowth}
\| u(t) \|_{L^\infty(\Omega)} \leq I + \gamma t,
\end{equation}
for all $t \in [0, T]$.
This result naturally gives rise to 
\[
\|u\|_{L^\infty((0, T) \times \Omega)} \leq I + \gamma T.
\]
An analogous estimate is valid for $v$. As a result, we obtain
\[
\| u \|_{L^\infty((0, T) \times \Omega)}, \, \| v \|_{L^\infty((0, T) \times \Omega)} \leq 2I
\]
for $T > 0$ chosen sufficiently small.

Employing \eqref{eqlingrowth}, we also derive
\[
\max_{0 \leq t \leq T} \| u(t) \|_{L^2(\Omega)} \leq \max_{0 \leq t \leq T} \| u(t) \|_{L^\infty(\Omega)} \leq I + \gamma T.
\]
The same argument is applicable to $v$, which results in
\[
\max_{0 \leq t \leq T} \| u(t) \|_{L^2(\Omega)}, \, \max_{0 \leq t \leq T} \| v(t) \|_{L^2(\Omega)} \leq 2 I
\]
for sufficiently small $T>0$.
The corresponding bounds on $n_{tr}$ and $n_{tr}'$ can be deduced from the formula
\[
n_{tr}(t) = n_{tr}(0) + \frac{1}{\varepsilon} \int_0^t \wt f_3(s) \, ds
\]
and from an analogous one for $n_{tr}'$. In fact,
\[
\| n_{tr}(t) \|_{L^2(\Omega)} \leq \| n_{tr}(0) \|_{L^2(\Omega)} + \frac{1}{\varepsilon} \int_0^t \big\| \wt f_3(s) \big\|_{L^2(\Omega)} \, ds \leq I + \frac{T}{\varepsilon} (2 + 4 \beta I)
\]
and, hence,
\[
\max_{0 \leq t \leq T} \| n_{tr}(t) \|_{L^2(\Omega)}, \, \max_{0 \leq t \leq T} \| n_{tr}'(t) \|_{L^2(\Omega)} \leq 2I
\]
for $T>0$ sufficiently small.

\paragraph{Step 3: Contraction property of $\mathcal{S}$.} We consider $(\wt u_1, \wt v_1, \wt n_{tr,1}, \wt n_{tr,1}'), (\wt u_2, \wt v_2, \wt n_{tr,2}, \wt n_{tr,2}') \in M_T$ and the corresponding solutions $(u_1, v_1, n_{tr,1}, n_{tr,1}') = \mathcal{S}(\wt u_1, \wt v_1, \wt n_{tr,1}, \wt n_{tr,1}') \in M_T$ and $(u_2, v_2, n_{tr,2}, n_{tr,2}') = \mathcal{S}(\wt u_2, \wt v_2, \wt n_{tr,2}, \wt n_{tr,2}') \in M_T$. We further introduce the notation
\[
u := u_1 - u_2, \quad \wt u := \wt u_1 - \wt u_2
\]
and similarly $v$, $n_{tr}$, $n_{tr}'$, $\wt v$, $\wt n_{tr}$ and $\wt n_{tr}'$. Then, we have to show that 
\[
\| (u, v, n_{tr}, n_{tr}') \|_{X_T} \leq c \| (\wt u, \wt v, \wt n_{tr}, \wt n_{tr}') \|_{X_T}
\]
with a constant $c \in (0, 1)$ on a time interval $[0, T]$ small enough. The norm in $X_T$ is defined as
\[
\| (u, v, n_{tr}, n_{tr}') \|_{X_T} := \max_{0 \leq t \leq T} \| u(t) \|_{L^2(\Omega)} + \max_{0 \leq t \leq T} \| v(t) \|_{L^2(\Omega)} + \max_{0 \leq t \leq T} \| n_{tr}(t) \|_{L^2(\Omega)} + \max_{0 \leq t \leq T} \| n_{tr}'(t) \|_{L^2(\Omega)}.
\]
We obtain the following system by taking the difference of corresponding equations of the system for the $1$- and the $2$-variables, respectively:
\begin{equation}
\begin{cases}
\begin{aligned}
\partial_t u - \Delta u &= A_n \big( \wt u_1^+ - \wt u_2^+ \big) + e^\frac{V_n}{2} \Big( \wt n_{tr,1}^{[0,1]} - \wt n_{tr,2}^{[0,1]} \Big) - e^{V_n} \Big( \wt u_1^+ \wt n_{tr,1}'^{[0,1]} - \wt u_2^+ \wt n_{tr,2}'^{[0,1]} \Big) && \hspace{-8pt} =: \wt f_1, \\
\partial_t v - \Delta v &= A_p \big( \wt v_1^+ - \wt v_2^+ \big) + e^\frac{V_p}{2} \Big( \wt n_{tr,1}'^{[0,1]} - \wt n_{tr,2}'^{[0,1]} \Big) - e^{V_p} \Big( \wt v_1^+ \wt n_{tr,1}^{[0,1]} - \wt v_2^+ \wt n_{tr,2}^{[0,1]} \Big) && \hspace{-8pt} =: \wt f_2, \\
\varepsilon \, \partial_t n_{tr} &= \wt n_{tr,1}'^{[0,1]} - \wt n_{tr,2}'^{[0,1]} - e^\frac{V_p}{2} \Big( \wt v_1^+ \wt n_{tr,1}^{[0,1]} - \wt v_2^+ \wt n_{tr,2}^{[0,1]} \Big) \\
&\hspace{3.52cm} - \wt n_{tr,1}^{[0,1]} + \wt n_{tr,2}^{[0,1]} + e^\frac{V_n}{2} \Big( \wt u_1^+ \wt n_{tr,1}'^{[0,1]} - \wt u_2^+ \wt n_{tr,2}'^{[0,1]} \Big) && \hspace{-8pt} =: \wt f_3, \\
\varepsilon \, \partial_t n_{tr}' &= \wt n_{tr,1}^{[0,1]} - \wt n_{tr,2}^{[0,1]} - e^\frac{V_n}{2} \Big( \wt u_1^+ \wt n_{tr,1}'^{[0,1]} - \wt u_2^+ \wt n_{tr,2}'^{[0,1]} \Big) \\
&\hspace{3.57cm} - \wt n_{tr,1}'^{[0,1]} + \wt n_{tr,2}'^{[0,1]} + e^\frac{V_p}{2} \Big( \wt v_1^+ \wt n_{tr,1}^{[0,1]} - \wt v_2^+ \wt n_{tr,2}^{[0,1]} \Big) && \hspace{-8pt} =: \wt f_4.
\end{aligned}
\end{cases}
\end{equation}
We mention that $u$ and $v$ are subject to the boundary conditions 
\[
\hat n \cdot \Big(\nabla u + \frac{1}{2} u \nabla V_n \Big) = \hat n \cdot \Big(\nabla v + \frac{1}{2} v \nabla V_p \Big) = 0
\]
and the homogeneous initial conditions
\[
u(0) = v(0) = n_{tr}(0) = n_{tr}'(0) = 0.
\]

First, one finds
\[
\max_{0 \leq t \leq T} \| u(t) \|_{L^2(\Omega)} \leq C_1 \| u \|_{W_2(0, T)} \leq C_1 C_2 \| \wt f_1 \|_{L^2((0, T) \times \Omega)}
\]
where $C_1 > 0$ is the constant resulting from the embedding $W_2(0, T) \hookrightarrow C([0, T], L^2(\Omega))$. The constant $C_2 > 0$ originates from well-known parabolic regularity estimates for $\| u \|_{W_2(0, T)}$ in terms of the $L^2$-norms of $\wt f_1$ and $u(0)=0$. Therefore,
\vspace{-1ex}
\begin{align*}
\max_{0 \leq t \leq T} \| u(t) \|_{L^2(\Omega)} &\leq C_1 C_2 \Big( \alpha \big\| \wt u_1^+ - \wt u_2^+ \big\|_{L^2((0, T) \times \Omega)} + \beta \big\| \wt n_{tr,1}^{[0,1]} - \wt n_{tr,2}^{[0,1]} \big\|_{L^2((0, T) \times \Omega)} \\
&\qquad\qquad\, + \beta \big\| \wt u_1^+ - \wt u_2^+ \big\|_{L^2((0, T) \times \Omega)} \big\| \wt n_{tr,1}'^{[0,1]} \big\|_{L^\infty((0, T) \times \Omega)} \\
&\qquad\qquad\, + \beta \big\| \wt u_2^+ \big\|_{L^\infty((0, T) \times \Omega)} \big\| \wt n_{tr,1}'^{[0,1]} - \wt n_{tr,2}'^{[0,1]} \big\|_{L^2((0, T) \times \Omega)} \Big) \\
&\leq C_1 C_2 \left( \beta \| \wt n_{tr} \|_{L^2((0, T) \times \Omega)} + (\alpha + \beta) \| \wt u \|_{L^2((0, T) \times \Omega)} + 2 \beta I \| \wt n_{tr}' \|_{L^2((0, T) \times \Omega)} \right).
\end{align*}
Moreover, every $f \in C([0, T], L^2(\Omega))$ fulfils
\begin{equation*}
\| f \|_{L^2((0, T) \times \Omega)}^2 = \int_0^T \! \int_\Omega f^2 \, dx \, dt \leq \int_0^T  \, dt \, \max_{0 \leq t \leq T} \| f(t) \|_{L^2(\Omega)}^2 = T \| f \|_{C([0, T], L^2(\Omega))}^2
\end{equation*}
and we proceed with the previous estimates to derive
\[
\max_{0 \leq t \leq T} \| u(t) \|_{L^2(\Omega)} \leq C_1 C_2 (\alpha + 2 \beta I) \sqrt{T} \big( \| \wt n_{tr} \|_{C([0, T], L^2(\Omega))} + \| \wt u \|_{C([0, T], L^2(\Omega))} + \| \wt n_{tr}' \|_{C([0, T], L^2(\Omega))} \big).
\]
In a similar way, we arrive at
\[
\max_{0 \leq t \leq T} \| v(t) \|_{L^2(\Omega)} \leq C_1 C_2 (\alpha + 2 \beta I) \sqrt{T} \big( \| \wt n_{tr}' \|_{C([0, T], L^2(\Omega))} + \| \wt v \|_{C([0, T], L^2(\Omega))} + \| \wt n_{tr} \|_{C([0, T], L^2(\Omega))} \big).
\]

Due to $n_{tr}(0) = 0$, one obtains
\[
n_{tr}(t) = \frac{1}{\varepsilon} \int_0^t \wt f_3 \, ds
\]
for $t \in [0, T]$ and, using similar techniques as above,
\vspace{-1ex}
\begin{align*}
\max_{0 \leq t \leq T} &\| n_{tr}(t) \|_{L^2(\Omega)} \leq \frac{1}{\varepsilon} \int_0^T \| \wt f_3 \|_{L^2(\Omega)} \, ds \leq \frac{\sqrt{T}}{\varepsilon} \| \wt f_3 \|_{L^2((0, T) \times \Omega)} \\
&\leq \frac{1 + 2 \beta I}{\varepsilon} \sqrt{T} \big( \| \wt u \|_{L^2((0, T) \times \Omega)} + \| \wt v \|_{L^2((0, T) \times \Omega)} + \| \wt n_{tr} \|_{L^2((0, T) \times \Omega)} + \| \wt n_{tr}' \|_{L^2((0, T) \times \Omega)} \big) \\
&\leq \frac{1 + 2 \beta I}{\varepsilon} T \big( \| \wt u \|_{C([0, T], L^2(\Omega))} + \| \wt v \|_{C([0, T], L^2(\Omega))} + \| \wt n_{tr} \|_{C([0, T], L^2(\Omega))} + \| \wt n_{tr}' \|_{C([0, T], L^2(\Omega))} \big).
\end{align*}
Note that because of $\wt f_4 = -\wt f_3$, the last estimate equally serves as an upper bound for $\| n_{tr}'(t) \|_{L^2(\Omega)}$. Taking the sum of the above estimates and choosing $T > 0$ sufficiently small yields
\[
\| (u, v, n_{tr}, n_{tr}') \|_{X_T} \leq c\, \| (\wt u, \wt v, \wt n_{tr}, \wt n_{tr}') \|_{X_T}
\]
with some $c \in (0, 1)$.

\paragraph{Step 4: Solution of \eqref{eqsystem}.} Step 2 and Step 3 imply that for $T > 0$ sufficiently small the mapping $\mathcal{S}: M_T \rightarrow M_T$ is a contraction. Banach's fixed point theorem, thus, guarantees that there exists a unique $(u, v, n_{tr}, n_{tr}') \in M_T$ such that $\mathcal{S}(u, v, n_{tr}, n_{tr}') = (u, v, n_{tr}, n_{tr}')$. Moreover, due to standard parabolic regularity for $(u, v)$, the fixed-point $(u, v, n_{tr}, n_{tr}')$ is the unique weak solution of
\begin{equation}
\label{eqsystemfixpt}
\begin{cases}
\begin{aligned}
\partial_t u - \Delta u &= A_n u^+ + e^\frac{V_n}{2} n_{tr}^{[0,1]} - e^{V_n} u^+ n_{tr}'^{[0,1]}, \\
\partial_t v - \Delta v &= A_p v^+ + e^\frac{V_p}{2} n_{tr}'^{[0,1]} - e^{V_p} v^+ n_{tr}^{[0,1]}, \\
\varepsilon \, \partial_t n_{tr} &= n_{tr}'^{[0,1]} - e^\frac{V_p}{2} v^+ n_{tr}^{[0,1]} - n_{tr}^{[0,1]} + e^\frac{V_n}{2} u^+ n_{tr}'^{[0,1]}, \\
\varepsilon \, \partial_t n_{tr}' &= n_{tr}^{[0,1]} - e^\frac{V_n}{2} u^+ n_{tr}'^{[0,1]} - n_{tr}'^{[0,1]} + e^\frac{V_p}{2} v^+ n_{tr}^{[0,1]}.
\end{aligned}
\end{cases}
\end{equation}
In order to prove the non-negativity of $u$, $v$, $n_{tr}$ and $n_{tr}'$, we adapt an argument from \cite{MWZ08}. First, we define
\[
h := \min(0, u)
\]
on $[0, T] \times \Omega$ and notice that $h \leq 0$ and $h(t=0) = 0$ a.e. since $u(0) \geq 0$ a.e. We now multiply the first equation in \eqref{eqsystemfixpt} with $h$ and integrate over $(0, t) \times \Omega$ for $t \in [0, T]$. This yields
\begin{equation}
\label{eqpositive}
\int_0^t \int_\Omega \partial_s u \, h \, dx\,ds = \int_0^t \int_\Omega \Delta u \, h \, dx\,ds + \int_0^t \int_\Omega A_n u^+ h \, dx\,ds + \int_0^t \int_\Omega \Big( e^\frac{V_n}{2} n_{tr}^{[0,1]} - e^{V_n} u^+ n_{tr}'^{[0,1]} \Big) \, h \, dx\,ds.
\end{equation}
The first term on the right hand side of \eqref{eqpositive} can be seen to be non-positive using integration by parts and the boundary condition from \eqref{eqbcuv}:
\vspace{-1ex}
\begin{align*}
\int_0^t \int_\Omega \Delta u \, h \, dx\,ds &= - \int_0^t \int_\Omega \nabla u \cdot \nabla h \, dx\,ds - \frac{1}{2} \int_0^t \int_{\partial \Omega} u\, h\, \hat n \cdot \nabla V_n \, d\sigma \, ds \\
&\leq - \int_0^t \int_\Omega \nabla u \cdot \nabla h \, dx\,ds = - \int_0^t \int_\Omega \nabla h \cdot \nabla h \, dx\,ds \leq 0
\end{align*}
due to $u h \geq 0$, $\hat n \cdot \nabla V_n \geq 0$, and since $\nabla h \neq 0$ holds true only in the case $u < 0$, where we have $\nabla u = \nabla h$ in $L^2$, see e.g. \cite{GT77}. Moreover,
\[
\int_0^t \int_\Omega A_n u^+ h \, dx\,ds = 0,
\]
and the third term in \eqref{eqpositive} is again non-positive as an integral over non-positive quantities:
\[
\int_0^t \int_\Omega \Big( e^\frac{V_n}{2} n_{tr}^{[0,1]} - e^{V_n} u^+ n_{tr}'^{[0,1]} \Big) \, h \, dx\,ds = \int_0^t \int_\Omega e^\frac{V_n}{2} n_{tr}^{[0,1]} \, h \, dx\,ds \leq 0
\]
as a consequence of $u^+ h = 0$ in $L^2(\Omega)$. 
The left hand side of \eqref{eqpositive} can be reformulated as 
\[
\int_0^t \int_\Omega \partial_s u \, h \, dx\,ds = \int_0^t \int_\Omega \partial_s h \, h \, dx\,ds = \frac{1}{2} \int_\Omega \int_0^t \Big( \frac{d}{ds} \, h^2 \Big) \, ds\,dx = \frac{1}{2} \| h(t) \|_{L^2(\Omega)}^2.
\]
For the first step, we have used that the integrand $\partial_s u \, h$ only contributes to the integral if $h < 0$. But in this case, $u = h$ and, hence, $\partial_s u = \partial_s h$ in $L^2$, see e.g. \cite{GT77}. This proves $\| h(t) \|_{L^2(\Omega)} \leq 0$ for all $t \in [0, T]$, which establishes $h(t) = 0$ in $L^2(\Omega)$ for all $t \in [0, T]$, and thus $u(t, x) \geq 0$ for all $t \in [0, T]$ and a.e. $x \in \Omega$. In the same way, one can show that $v(t, x) \geq 0$ for all $t \in [0, T]$ and a.e. $x \in \Omega$. 

The non-negativity of $n_{tr}$ follows from a similar idea using 
\[
h := \min(0, n_{tr}).
\]
Again, $h \leq 0$ and $h(t=0) = 0$ due to $n_{tr}(0) \geq 0$. Multiplying the third equation of \eqref{eqsystemfixpt} with $h$ and integrating over $(0, t) \times \Omega$, $t \in [0, T]$, we find
\[
\varepsilon \int_0^t \int_\Omega \partial_s n_{tr} \, h \, dx\,ds = \int_0^t \int_\Omega \Big( n_{tr}'^{[0,1]} - e^\frac{V_p}{2} v^+ n_{tr}^{[0,1]} - n_{tr}^{[0,1]} + e^\frac{V_n}{2} u^+ n_{tr}'^{[0,1]} \Big) h \, dx \, ds.
\]
As before, all terms under the integral on the right hand side involving $n_{tr}^{[0, 1]}$ vanish. Consequently,
\[
\frac{\varepsilon}{2} \| h(t) \|_{L^2(\Omega)}^2 = \varepsilon \int_0^t \int_\Omega \partial_s h \, h \, dx\,ds = \int_0^t \int_\Omega \Big( n_{tr}'^{[0,1]} + e^\frac{V_n}{2} u^+ n_{tr}'^{[0,1]} \Big) h \, dx \, ds \leq 0
\]
for all $t \in [0, T]$. The same result holds true for $n_{tr}'$. Therefore, we have verified that $n_{tr}(t, x)$, $n_{tr}'(t, x) \geq 0$ for all $t \in [0, T]$ and a.e. $x \in \Omega$.

The non-negativity of $n_{tr}$ and $n_{tr}'$ together with $n_{tr}' = 1 - n_{tr}$ from \eqref{eqsumntrntr'} now even imply
\[
n_{tr}(t, x), \, n_{tr}'(t, x) \in [0, 1], \qquad\text{for all}\quad t \in [0, T] \quad\text{and a.e.}\quad x \in \Omega. 
\]
This allows us to identify the unique weak solution $(u, v, n_{tr}, n_{tr}')$ of \eqref{eqsystemfixpt} to equally solve 
\begin{equation}
\label{eqsystemresult}
\begin{cases}
\begin{aligned}
\partial_t u - \Delta u &= A_n u + e^\frac{V_n}{2} n_{tr} - e^{V_n} u (1-n_{tr}), \\
\partial_t v - \Delta v &= A_p v + e^\frac{V_p}{2} (1-n_{tr}) - e^{V_p} v\, n_{tr}, \\
\varepsilon \, \partial_t n_{tr} &= 1-n_{tr} - e^\frac{V_p}{2} v\, n_{tr} - n_{tr} + e^\frac{V_n}{2} u (1-n_{tr}),
\end{aligned}
\end{cases}
\end{equation}
which is the transform version of the original problem \eqref{eqsystem}.

Up to now, we have proven that there exists a unique solution $(u, v, n_{tr}) \in C([0, T], L^2(\Omega))^3$ such that $(u, v, n_{tr}, 1 - n_{tr}) \in M_T$ on a sufficiently small time-interval $[0, T]$.

\paragraph{Step 5: Global solution.} We now fix $T^\ast>0$ in such a way that $[0, T^\ast)$ is the maximal time-interval of existence for the solution $(u, v, n_{tr}) \in C([0, T], L^2(\Omega))^3$ of \eqref{eqsystemresult}. Moreover, we choose some arbitrary $q \in \mathbb{N}_{\geq 2}$ and multiply the first equation in \eqref{eqsystemresult} with $u^{q-1}$. Integrating over $\Omega$ at time $t \in [0, T^\ast)$ gives
\[
\frac{d}{dt} \int_{\Omega} \frac{u^q}{q} \, dx = \int_{\Omega} u^{q-1} \partial_t u \, dx = \int_{\Omega} u^{q-1} \Delta u \, dx + \int_{\Omega} A_n u^{q} \, dx + \int_{\Omega} u^{q-1} \Big( e^\frac{V_n}{2} n_{tr} - e^{V_n} u (1-n_{tr}) \Big) \, dx.
\]
Integration by parts and the estimates $|A_n| \leq \alpha$, $\big|e^\frac{V_n}{2} n_{tr} - e^{V_n} u (1-n_{tr})\big| \leq \beta (1 + u)$ further yield
\[
\frac{d}{dt} \int_{\Omega} \frac{u^q}{q} \, dx \leq - (q-1) \int_\Omega u^{q-2} | \nabla u |^2 \, dx - \frac{1}{2} \int_{\partial \Omega} u^q \, \hat n \cdot \nabla V_n \, d\sigma \, ds + \alpha \int_{\Omega} u^{q} \, dx + \beta \int_\Omega (u^{q-1} + u^q) \, dx.
\]
Moreover, we derive
\[
\int_\Omega u^{q-1} \, dx = \int_{\{ u \leq 1 \}} u^{q-1} \, dx + \int_{\{ u > 1 \}} \frac{u^q}{u} \, dx \leq \int_\Omega 1 \, dx + \int_\Omega u^q \, dx = 1 + \int_\Omega u^q \, dx
\]
where we used $|\Omega| = 1$. Hence,
\begin{equation}
\label{eqsolutionl2h1}
\frac{d}{dt} \int_{\Omega} \frac{u^q}{q} \, dx \leq \beta + (\alpha + 2 \beta) \int_\Omega u^q \, dx \leq \gamma \left(1 + \int_\Omega u^q \, dx \right)
\end{equation}
after defining $\gamma := \alpha + 2 \beta$. This results in 
\[
\frac{d}{dt} \int_{\Omega} u^q \, dx \leq \gamma q \left( 1 + \int_\Omega u^q \, dx \right),
\]
which can be integrated over time from $0$ to $t$:
\[
\| u(t) \|_{L^q(\Omega)}^q \leq \| u(0) \|_{L^q(\Omega)}^q + \gamma q \int_0^t \Big( 1 + \| u(s) \|_{L^q(\Omega)}^q \Big) \, ds.
\]
From this generalised Gronwall-type inequality, we deduce (cf. \cite{Bee75})
\[
\| u(t) \|_{L^q(\Omega)}^q \leq \| u(0) \|_{L^q(\Omega)}^q e^{\gamma qt} + e^{\gamma qt} - 1 < \big(1 + \| u(0) \|_{L^q(\Omega)}^q \big) e^{\gamma qt}
\]
and
\[
\| u(t) \|_{L^q(\Omega)} \leq \big(1 + \| u(0) \|_{L^q(\Omega)} \big) e^{\gamma t} \leq I e^{\gamma t}
\]
since $1+\| u(0) \|_{L^q(\Omega)} \leq 1+\| u(0) \|_{L^\infty(\Omega)} \leq I$. As $I e^{\gamma t}$ is independent of $q$, we even arrive at
\[
\| u(t) \|_{L^\infty(\Omega)} \leq I e^{\gamma t}.
\]
In the same way, we can show that $\| v(t) \|_{L^\infty(\Omega)} \leq I e^{\gamma t}$ for all $t \in [0, T^\ast)$. As a consequence, we obtain that the solution $(u, v, n_{tr}) \in C([0, T], L^2(\Omega))^3$ can be extended for all times, i.e. $T^\ast = \infty$.

\paragraph{Step 6: $L^\infty$-bounds for $n$ and $p$.}
We now prove the linearly growing $L^{\infty}$-bounds \eqref{npLinfty} for $n$ and $p$. We only detail the bound for $p$ and sketch how the bound for $n$ follows in a similar fashion. 
After recalling (with $\tau_p=1$ w.l.o.g.)
\begin{equation*}
\partial_t p = \nabla \cdot J_p + \left( 1 - n_{tr} - \frac{p}{p_0 e^{-V_p}} n_{tr} \right),\qquad J_p=e^{-V_p} \nabla \left( p\, e^{V_p} \right),
\end{equation*}
we introduce the variable $w=p \,e^{V_p}$ and 
observe that $\nabla \cdot J_p = \nabla \cdot \left(e^{-V_p} \nabla w \right) =e^{-V_p}\left( \Delta w - \nabla V_p \cdot \nabla w\right)$ and thus, 
\begin{equation}\label{eq:w}
\partial_t w = \Delta w - \nabla V_p \cdot \nabla w + e^{V_p}\left( 1 - n_{tr} - \frac{n_{tr}}{p_0} w  \right), 
\end{equation}
while the no-flux boundary condition $\hat n \cdot J_p = 0$ on $\partial \Omega$ transforms to the homogeneous Neumann condition $\hat n \cdot \nabla w = 0$ on $\partial \Omega$.

Next, by testing \eqref{eq:w} with the positive part $(w-r-st)_{+}:=\max\{0, w-r-st\}$ for two constant $r,s>0$ to be chosen, we calculate
by integration by parts in the first two terms
\begin{align*}
\frac{d}{dt} \frac{1}{2} \int_{\Omega} &(w-r-st)_{+}^2\,dx = 
\int_{\Omega} (w-r-st)_{+} \left(\Delta w - \nabla V_p \cdot \nabla w + e^{V_p}\Bigl( 1 - n_{tr} - \frac{n_{tr}}{p_0} w \Bigr) -s \right)dx \\
&=- \int_{\Omega} \mathbb{1}_{w\ge r+st} |\nabla w|^2 \,dx 
- \int_{\Omega} \nabla V_p \cdot \nabla \frac{(w-r-st)_{+}^2}{2}\,dx \\
&\quad+ \int_{\Omega} (w-r-st)_{+}  \left(e^{V_p}\Bigl( 1 - n_{tr} - \frac{n_{tr}}{p_0} w \Bigr) -s\right)dx\\
&\le \frac{\|\Delta V_p\|_{\infty}}{2} \int_{\Omega} (w-r-st)_{+}^2\,dx + \int_{\Omega} (w-r-st)_{+}  \left(e^{V_p}\Bigl( 1 - n_{tr} - \frac{n_{tr}}{p_0} w \Bigr) -s\right)dx,
\end{align*}
since $\hat n \cdot V_p\ge0$ by assumption \eqref{eqpot}.
Moreover, since $n_{tr}\in[0,1]$ and $w\ge0$, we have 
\begin{equation*}
\frac{d}{dt} \frac{1}{2} \int_{\Omega} (w-r-st)_{+}^2\,dx 
\le \frac{\|\Delta V_p\|_{\infty}}{2} \int_{\Omega} (w-r-st)_{+}^2\,dx + \int_{\Omega} (w-r-st)_{+}  \Bigl(\|e^{V_p}\|_{\infty} -s\Bigr)dx.
\end{equation*}
Thus, by choosing $s \coleq \|e^{V_p}\|_{\infty}$ and $r \coleq \|w(\tau,\cdot)\|_{\infty}$ for some time $\tau\ge0$, we conclude that 
\begin{equation*}
\frac{d}{dt} \int_{\Omega} (w-r-st)_{+}^2\,dx 
\le \|\Delta V_p\|_{\infty} \int_{\Omega} (w-r-st)_{+}^2\,dx, 
\end{equation*}
and a Gronwall lemma implies 
\begin{equation}\label{wLinfty}
\|w(t,\cdot)\|_{\infty} \le \|w(\tau,\cdot)\|_{\infty}+\|e^{V_p}\|_{\infty}\,t,\qquad
\text{for all } t\ge\tau\ge0.
\end{equation}
Transforming back, this yields 
\begin{equation}\label{pLinfty}
\|p(t,\cdot)\|_{\infty} \le \frac{1}{\inf\{e^{V_p}\}}\left(\|p(\tau,\cdot)\|_{\infty}\|e^{V_p}\|_{\infty}+\|e^{V_p}\|_{\infty}\,t\right),\qquad
\text{for all } t\ge\tau\ge0.
\end{equation}

In order to deduce the analog bound for $n$ in \eqref{npLinfty}, we consider (with $\tau_n=1$ w.l.o.g.) 
\begin{equation*}
\partial_t n = \nabla \cdot J_n + \left( n_{tr} - \frac{n}{n_0 e^{-V_n}} \bigl(1-n_{tr}\bigr) \right),\qquad J_n=e^{-V_n} \nabla \big(n\, e^{V_n} \bigr).
\end{equation*}
We introduce the variable $\omega=n \,e^{V_n}$ and obtain in the same way as in \eqref{eq:om}
\begin{equation*}
\partial_t \omega = \Delta \omega - \nabla V_n \cdot \nabla \omega + e^{V_n}\left( n_{tr} - \frac{1-n_{tr}}{n_0} \omega  \right).
\end{equation*}
Following the same arguments as above, 
\begin{equation}\label{omLinfty}
\|\omega(t,\cdot)\|_{\infty} \le \|\omega(\tau,\cdot)\|_{\infty}+\|e^{V_n}\|_{\infty}\,t,\qquad
\text{for all } t\ge\tau\ge0.
\end{equation}
Transforming back, this yields 
\begin{equation}\label{nLinfty}
\|n(t,\cdot)\|_{\infty} \le \frac{1}{\inf\{e^{V_n}\}}\left(\|n(\tau,\cdot)\|_{\infty}\|e^{V_n}\|_{\infty}+\|e^{V_n}\|_{\infty}\,t\right),\qquad
\text{for all } t\ge\tau\ge0
\end{equation}
and thus \eqref{npLinfty}.

\paragraph{Step 7: Regularity and bounds for $n_{tr}$.}
We still have to verify $n_{tr} \in C([0, T], L^\infty(\Omega))$ for all $T > 0$. Now, let $T>0$ and recall that
\[
n_{tr}(t) = n_{tr}(0) + \frac{1}{\varepsilon} \int_0^t \big( 1-n_{tr} - e^{V_p} p \, n_{tr} - n_{tr} + e^{V_n} n (1-n_{tr}) \big) \, ds
\]
in $L^2(\Omega)$ for all $t \in [0,T]$. Considering a sequence $(t_n)_{n \in \mathbb{N}} \subset [0, T]$, $t_n \rightarrow t$, we thus arrive at
\[
\| n_{tr}(t_n) - n_{tr}(t) \|_{L^\infty(\Omega)} \leq \frac{1}{\epsilon} \left| \int_t^{t_n}\! \| 1-n_{tr} - e^{V_p} p \, n_{tr} - n_{tr} + e^{V_n} n (1-n_{tr}) \|_{L^\infty(\Omega)} \, ds \right| \leq \frac{|t_n\! -\! t|}{\epsilon} C_{T} \rightarrow 0
\]
for $n \rightarrow \infty$. This proves the assertion. 

The claim $\partial_t n_{tr} \in C([0, T], L^2(\Omega))$ for all $T > 0$ is an immediate consequence of the last equation in \eqref{eqsystemresult} together with the $L^2$-continuity and $L^\infty$-bounds of $u$, $v$ and $n_{tr}$.

Next, concerning the bounds \eqref{ntrbounds}, we recall system \eqref{eqsystem} and observe that for all $\varepsilon\in(0,\varepsilon_0]$
\begin{align*}
\varepsilon \partial_t n_{tr} &= h(n_{tr}):=R_p(p, n_{tr}) - R_n(n, n_{tr}),
\end{align*}
in the sense of $L^2(\Omega)$, where $h(n_{tr}=0)\ge \frac{1}{\tau_p}>0$ and $h(n_{tr}=1)\le -\frac{1}{\tau_n} < 0$ uniformly for all non-negative $n$ and $p$. 
Therefore, wherever $n_{tr,I}(x)=0$ (or analogous $n_{tr,I}(x)=1$), 
an elementary argument proves that $n_{tr}(t,x)$ grows (or decreases) 
linearly in time and decays back to $0$ (or $1$) at most like $(a+bt)^{-1}$. More precisely, we reuse the transformed variable $w = p \, e^{V_p}$ and find 
\begin{equation*}
\varepsilon \partial_t n_{tr} \ge 
\frac{1}{\tau_p}\left[1 - \left(1+\frac{\tau_p}{\tau_n} + \frac{\|w\|_{\infty}}{ p_0}\right)n_{tr}\right]
\ge 
\frac{1}{\tau_p}\left[1 - \left(1+\frac{\tau_p}{\tau_n} + \frac{r+st}{ p_0}\right)n_{tr}\right]
\end{equation*}
for some constants $r$ and $s$ due to the estimate \eqref{wLinfty}. Setting $\tau_n \coleq \tau_p \coleq 1$ w.l.o.g., we have
\[
\varepsilon \partial_t n_{tr} \geq 1 - (\wt r + \wt s t) n_{tr}.
\]
with appropriate $\wt r, \wt s > 0$ independent of $\varepsilon$. 
By observing that the ODE $\varepsilon_0\dot{y} = 1 - (\wt r + \wt s t) y$ features the positive nullcline $y_0(t) = 1/(\wt r + \wt s t)$, 
which moreover attracts all solution trajectories, standard comparison arguments (pointwise in $x\in\Omega$) imply that for all times $\tau > 0$, there exists 
a constant exist positive constants $\eta = \eta(\varepsilon_0, \tau, \tau_n, \tau_p)$, $\theta = \theta(C_n,C_p,K_n,K_p)$ and a sufficiently small constant $\gamma(\tau,C_n,C_p,K_n,K_p)>0$   
such that 
\begin{equation*}
n_{tr}(t,x) \ge \min\Bigl\{\eta t, \frac{\gamma}{1+\theta t}\Bigr\} \quad \text{for all } t\ge0 \text{ and a.a. } x\in\Omega   
\end{equation*}
where $\eta \tau = \frac{\gamma}{1+\theta \tau}$ such that the linear and the inverse linear bound intersect at time $\tau$.

Finally, the upper bounds \eqref{ntrbounds} follow from analog arguments.

\paragraph{Step 8: Lower bounds for $n$ and $p$.}
Finally, we prove the bounds \eqref{npbounds}. We will only detail the argument for the lower bound on $n$, as the bound for $p$ follows in an analog way. Recalling the transformed equation for $\omega=e^{V_n}n$ (satisfying $\hat n \cdot \nabla \omega = 0$ on $\partial \Omega$), we estimate
\begin{equation}\label{aaa}
\partial_t \omega = \Delta \omega - \nabla V_n \cdot \nabla \omega + e^{V_n}\left( n_{tr} - \frac{1-n_{tr}}{n_0} \omega  \right)
\ge \Delta \omega -\nabla V_n \cdot \nabla \omega - \alpha \, \omega + c n_{tr},
\end{equation}
where $\alpha > 0$ and $c>0$ are positive constants due to the assumptions \eqref{eqpot} and $e^{V_n} \omega n_{tr}\ge0$.

Next, we use \eqref{ntrbounds}, i.e. that for all $\tau>0$ fixed, there exist constants $\eta$, $\theta$ and $\gamma$ such that $n_{tr}(t,x) \ge \eta t$ for all $0\le t \le \tau$ and a.a. $x\in\Omega$, while $n_{tr}(t,x) \ge \gamma/(1 + \theta t)$ for all $t \ge \tau$ and a.a. $x\in\Omega$.
Then, by introducing the negative part $(\omega)_{-}:= \min\{\omega,0\}$ and testing \eqref{aaa} with $\bigl(\omega-\frac{\mu t^2}{2}\bigr)_{-}$ for a constant $\mu>0$ to be chosen below, we estimate
\begin{align*}
&\frac{d}{dt} \frac{1}{2} \int_{\Omega} \Bigl(\omega-\frac{\mu t^2}{2}\Bigr)_{-}^2 \,dx =  \int_{\Omega} \Bigl(\omega-\frac{\mu t^2}{2}\Bigr)_{-}\left(\partial_t \omega -\mu t\right)dx=
\int_{\Omega} \left|\Bigl(\omega-\frac{\mu t^2}{2}\Bigr)_{-}\right|\left(-\partial_t \omega +\mu t\right)dx
\\
\le  &\int_{\Omega} \left|\Bigl(\omega-\frac{\mu t^2}{2}\Bigr)_{-}\right|
\left(- \Delta \omega + \nabla V_n \cdot \nabla \omega + \alpha \omega - c n_{tr} +\mu t\right)dx\\
= &\int_{\Omega} \Bigl(\omega-\frac{\mu t^2}{2}\Bigr)_{-} \left(\Delta \omega - \nabla V_n \cdot \nabla \omega \right) dx + \int_{\Omega} \left|\Bigl(\omega-\frac{\mu t^2}{2}\Bigr)_{-}\right|
\left(\alpha \omega - c n_{tr} +\mu t\right)dx\\
\le &- \int_{\Omega} \mathbb{1}_{\omega \le \frac{\mu t^2}{2}} |\nabla \omega|^2\,dx - \frac12 \int_\Omega \nabla \Bigl(\omega-\frac{\mu t^2}{2}\Bigr)_{-}^2 \cdot \nabla V_n \, dx
+ \int_{\Omega} \left|\Bigl(\omega-\frac{\mu t^2}{2}\Bigr)_{-}\right|
\left( \alpha \omega -c n_{tr}+ \mu t\right)dx.
\end{align*}
Thus, for $0\le t \le \tau$ when $n_{tr}(t,x) \ge \eta t$, we have 
\begin{align*}
\frac{d}{dt} \frac{1}{2} \int_{\Omega} \Bigl(\omega-\frac{\mu t^2}{2}\Bigr)_{-}^2 \,dx 
\le \int_\Omega \Bigl(\omega-\frac{\mu t^2}{2}\Bigr)_{-}^2 \frac{\Delta V_n}{2} \, dx + \int_{\Omega} \left|\Bigl(\omega-\frac{\mu t^2}{2}\Bigr)_{-}\right|
\left( \alpha \frac{\mu t^2}{2}  -c \eta t + \mu t\right)dx.
\end{align*}
If we choose $\mu \bigl(\alpha\frac{\tau}{2}+1\bigr)\le c \eta$, we obtain
\[
\frac{d}{dt} \int_{\Omega} \Bigl(\omega-\frac{\mu t^2}{2}\Bigr)_{-}^2 \,dx 
\le \|\Delta V_n\|_{L^\infty(\Omega)} \int_\Omega \Bigl(\omega-\frac{\mu t^2}{2}\Bigr)_{-}^2 \, dx.
\]
Hence, since $\int_{\Omega} \bigl(\omega(0,x)\bigr)_{-}^2dx=0$, we deduce from a Gronwall lemma
$$
\int_{\Omega} \Bigl(\omega-\frac{\mu t^2}{2}\Bigr)_{-}^2 \,dx =0, 
\qquad \text{for all } 0\le t \le \tau,
$$
which yields in particular $\omega(t,x)\ge \frac{\mu t^2}{2}$ for all $0 \leq t \leq \tau$ and a.a. $x\in\Omega$. 

Moreover, for $t\ge \tau$ when $n_{tr}(t,x) \ge \frac{\gamma}{1+\theta t}$, we test \eqref{aaa} with $\bigl(\omega-\frac{\Gamma}{1+\theta t}\bigr)_{-}$ for a constant $\Gamma>0$ to be chosen below, and estimate similar to above
\begin{align*}
\frac{d}{dt} \frac{1}{2} &\int_{\Omega} \Bigl(\omega-\frac{\Gamma}{1+\theta t}\Bigr)_{-}^2 \,dx =  \int_{\Omega} \Bigl(\omega-\frac{\Gamma}{1+\theta t}\Bigr)_{-} \left(\partial_t \omega+\frac{\Gamma\theta}{(1+\theta t)^2}\right)dx\\
&\le  \int_{\Omega} \Bigl(\omega-\frac{\Gamma}{1+\theta t}\Bigr)_{-}
\left( \Delta \omega - \nabla V_n \cdot \nabla \omega - \alpha \omega + c n_{tr} + \frac{\Gamma\theta}{(1+\theta t)^2}\right)dx\\
& \le - \int_{\Omega} \mathbb{1}_{\omega\le \frac{\Gamma}{1+\theta t}} |\nabla \omega|^2\,dx - \frac12 \int_\Omega \nabla \Bigl(\omega-\frac{\Gamma}{1+\theta t}\Bigr)_{-}^2 \cdot \nabla V_n \, dx \\
&\qquad + \int_{\Omega} \left|\Bigl(\omega-\frac{\Gamma}{1+\theta t}\Bigr)_{-}\right|
\left( \alpha \omega - c n_{tr} - \frac{\Gamma\theta}{(1+\theta t)^2}\right)dx.
\end{align*}
And as $n_{tr}(t,x) \ge \frac{\gamma}{1+\theta t}$ for $t \geq \tau$, we find
\begin{multline*}
\frac{d}{dt} \frac{1}{2} \int_{\Omega} \Bigl(\omega-\frac{\Gamma}{1+\theta t}\Bigr)_{-}^2 \,dx \leq 
\int_{\Omega} \Bigl(\omega-\frac{\Gamma}{1+\theta t}\Bigr)_{-}^2 \frac{\Delta V_n}{2} \, dx \\
+ \int_{\Omega} \left|\Bigl(\omega-\frac{\Gamma}{1+\theta t}\Bigr)_{-}\right|
\left( \alpha \frac{\Gamma}{1+\theta t} - c \frac{\gamma}{1+\theta t}-\frac{\Gamma\theta}{(1+\theta t)^2}\right)dx.
\end{multline*}
Choosing $\alpha \Gamma \le c \gamma$, we arrive at
\[
\frac{d}{dt} \int_{\Omega} \Bigl(\omega-\frac{\Gamma}{1+\theta t}\Bigr)_{-}^2 \,dx \leq 
\|\Delta V_n\|_{L^\infty(\Omega)} \int_{\Omega} \Bigl(\omega-\frac{\Gamma}{1+\theta t}\Bigr)_{-}^2 \, dx.
\]
By further reducing either $\Gamma$ or $\mu$, we are able to satisfy $\frac{\Gamma}{1+\theta \tau} = \frac{\mu \tau^2}{2}$. On the one hand, this implies that $\int_{\Omega} \Bigl(\omega(\tau,x) - \frac{\Gamma}{1+\theta \tau}\Bigr)_{-}^2 \,dx=0$, which results --- by using a Gronwall argument --- in 
$$
\int_{\Omega} \Bigl(\omega(t,x)-\frac{\Gamma}{1+\theta t}\Bigr)_{-}^2 \,dx=0, 
\qquad \text{for all }  t \ge \tau,
$$
and, hence, $\omega(t,x)\ge \frac{\Gamma}{1+\theta t}$ for all $t \geq \tau$ and a.a. $x\in\Omega$. On the other hand, the increasing and decreasing bounds now again intersect at time $\tau$ as desired.
%
\end{proof}

\vskip 0.5cm
\noindent{\bf Acknowledgements.} 	
The second author is supported by the International Research Training Group IGDK 1754 ``Optimization and Numerical Analysis for Partial Differential Equations with Nonsmooth Structures'', funded by the German Research Council (DFG) and the Austrian Science Fund (FWF): [W 1244-N18].

\bibliographystyle{alpha}

\end{document}